\def \dis {\displaystyle}
\def \NN {\mathbb N}
\def \ZZ {\mathbb Z}
\def \RR {\mathbb R}
\def \CC {\mathbb C}
\def \A {\mathcal{A}}
\def \E {\mathcal{E}}
\def \F {\mathcal{F}}
\def \L {\mathcal{L}}
\def \M {\mathcal{M}}
\def \P {\mathcal{P}}
\def \ecart {\noalign{\medskip}}
\theoremstyle{definition}
\newtheorem{Th}{Theorem}[section]
\newtheorem{Prop}[Th]{Proposition}
\newtheorem{Lem}[Th]{Lemma}
\newtheorem{Cor}[Th]{Corollary}
\newtheorem{Def}[Th]{Definition}
\newtheorem{Rem}[Th]{Remark}
\def \refS #1{Section~\ref{#1}}
\def \refD #1{Definition~\ref{#1}}
\def \refT #1{Theorem~\ref{#1}}
\def \refL #1{Lemma~\ref{#1}}
\def \refC #1{Corollary~\ref{#1}}
\def \refP #1{Proposition~\ref{#1}}
\def \refR #1{Remark~\ref{#1}}
\title{Generalized diffusion problems in a conical domain, part II}
\author{Rabah Labbas, St\'ephane Maingot \& Alexandre Thorel \\ \ecart
\scriptsize R. L., S. M. \& A. T.: Normandie Univ, UNIHAVRE, LMAH, FR-CNRS-3335, ISCN, 76600 Le Havre, France. \\ \ecart 
\scriptsize rabah.labbas@univ-lehavre.fr, stephane.maingot@univ-lehavre.fr, alexandre.thorel@univ-lehavre.fr}
\date{}
\begin{document}

\maketitle

\begin{abstract}

After different variables and functions changes, the generalized dispersal problem, recalled in \eqref{Pb cone fini} below and considered in part I, see \cite{Cone P1},  leads us to invert a sum of linear operators in a suitable Banach space, see \eqref{eq} below.   

The essential result of this second part lies in the complete study of this sum using the two well-known strategies: the one of Da Prato-Grisvard \cite{daprato-grisvard} and the one of Dore-Venni \cite{dore-venni}.\\
\textbf{Key Words and Phrases}: Sum of linear operators, second and fourth order boundary value problem, functional calculus, bounded imaginary powers, maximal regularity \\
\textbf{2020 Mathematics Subject Classification}: 34G10, 35B65, 35C15, 35R20, 47A60. 
\end{abstract}

\section{Introduction and main result}\label{Sect Intro}

This work is a natural continuation of \cite{Cone P1}, where we have studied, for $k>0$, the following problem 
\begin{equation}\label{Pb cone fini}
\left\{ \begin{array}{ll}
\Delta ^{2}u-k\Delta u=f &\text{in }S_{\omega,\rho} \\ \ecart 
u=\dfrac{\partial u}{\partial n}=0 & \text{on }\Gamma _{0}\cup \Gamma_{\omega,\rho},
\end{array}\right. 
\end{equation}
where, for given $\rho>0$ and $\omega \in (0,2\pi]$:
\begin{equation*}
\left\{ 
\begin{array}{lll}
S_{\omega,\rho} & = &\dis \left\{ (x,y)=(r\cos \theta ,r\sin \theta ):0<r<\rho \text{ and } 0<\theta <\omega \right\}, \quad \\
\Gamma _{0} & = &\dis (0,+\infty) \times \left\{ 0\right\}  \\ 
\Gamma _{\omega,\rho} & = & \dis \left\{ (r\cos \omega ,r\sin \omega ): 0<r<\rho\right\}.
\end{array}\right. 
\end{equation*} 
Note that a similar problem set in a cylindrical domain has been already studied by \cite{LMT}, but in the present paper, since the domain is conical, the study is completely different.
 
After using the polar coordinates $v(r,\theta )=u(r\cos \theta ,r\sin \theta )$, the function
$$\phi(t)(\theta) := \phi (t,\theta )= \frac{v(\rho e^{-t},\theta )}{\rho e^{-t}}, \quad t > 0,$$
and the abstract vector-valued function
\begin{equation*}
V(t) = \left( 
\begin{array}{c}
e^{\nu t}\phi (t) \\ 
e^{\nu t} \phi''(t)
\end{array}\right), \quad \text{with} \quad \nu = 3 - \frac{2}{p} \in (1,3),
\end{equation*}
the above study led us, to solve the following abstract equation 
\begin{equation}\label{eq}
\left(\L_1 + \L_2\right) V + k \rho^2 \left(\P_1 + \P_2 \right)V = \F,
\end{equation}
where $\F \in L^p(0,+\infty;W_{0}^{2,p}(0,\omega) \times L^{p}(0,\omega))$, with $p\in(1,+\infty)$. Operators $\L_1$, $\L_2$, $\P_1$ and $\P_2$, introduced in \cite{Cone P1}, are recalled here for the reader convenience:
\begin{equation*}
\left\{ \begin{array}{cll}
D(\mathcal{L}_{1}) & = & \dis \left\{ V\in W^{2,p}(0,+\infty ;X): V(0) = V(+\infty) = 0\right\} \\ \ecart
\left[ \mathcal{L}_{1}(V)\right] (t) & = & \dis \left( \partial_{t}-\nu I\right)^{2}V(t)=V''(t)-2\nu V'(t)+\nu^{2}V(t),
\end{array}\right.
\end{equation*}
with $\nu \in \RR$ fixed, 
\begin{equation*}
\left\{ 
\begin{array}{lll}
D(\mathcal{L}_{2}) & = &\dis \left\{ V\in L^{p}(0,+\infty ;X): \text{for }a.e.~t\in
(0,+\infty ),~ V(t)\in D(\mathcal{A})\right\} \\ \ecart
\left[ \mathcal{L}_{2}(V)\right] (t) & = & -\mathcal{A}V(t),
\end{array}
\right.
\end{equation*}
with
\begin{equation*}
\left\{ 
\begin{array}{lll}
D(\mathcal{A}) & = & \dis \left[ W^{4,p}(0,\omega)\cap W_{0}^{2,p}(0,\omega)\right] \times W_{0}^{2,p}(0,\omega)\subset X \\ \ecart
\mathcal{A}\left( 
\begin{array}{c}
\psi _{1} \\ 
\psi _{2}
\end{array}
\right) &=& \left( 
\begin{array}{c}
\psi _{2} \\ 
-\left( \dfrac{\partial ^{2}}{\partial \theta ^{2}}+1\right) ^{2}\psi
_{1}-2\left( \dfrac{\partial ^{2}}{\partial \theta ^{2}}-1\right) \psi _{2}%
\end{array}
\right), \quad \left( 
\begin{array}{c}
\psi _{1} \\ 
\psi _{2}
\end{array}
\right) \in D(\A),
\end{array}
\right.
\end{equation*}
\begin{equation*}
\left\{ 
\begin{array}{lll}
D(\mathcal{P}_{1}) & = &\dis \left\{ V\in L^{p}(0,+\infty ;X): \text{for }a.e.~t\in
(0,+\infty ),~ V(t)\in D(\mathcal{A}_0)\right\} \\ \ecart
\left[ \mathcal{P}_{1}(V)\right] (t) & = & -e^{-2t} \mathcal{A}_{0}V(t),
\end{array}
\right.
\end{equation*}
with
\begin{equation*}
\left\{ 
\begin{array}{lll}
D(\mathcal{A}_{0}) & = & W_{0}^{2,p}(0,\omega) \times L^{p}(0,\omega) = X \\ \ecart
\mathcal{A}_{0}\left( 
\begin{array}{c}
\psi _{1} \\ 
\psi _{2}
\end{array}
\right) & = & \dis \left( 
\begin{array}{c}
0 \\ 
\left( \dfrac{\partial ^{2}}{\partial \theta ^{2}}+1\right) \psi _{1}+\psi_{2}
\end{array}
\right), \quad \left( 
\begin{array}{c}
\psi _{1} \\ 
\psi _{2}
\end{array}
\right) \in D(\A_0),
\end{array}\right.
\end{equation*}
and
\begin{equation*}
\left\{ 
\begin{array}{lll}
D(\mathcal{P}_{2}) & = &\dis W^{1,p}(0,+\infty ;X) \\ \ecart
\left[ \mathcal{P}_{2}(V)\right] (t) & = & -e^{-2t}\left( \mathcal{B}_{2}V\right) (t),
\end{array}\right.
\end{equation*}
with
\begin{equation*}
\mathcal{B}_{2}=\left(\begin{array}{cc}
0 & 0 \\ 
-2(\partial_{t}-\nu I) & 0
\end{array}\right).
\end{equation*}
In the present paper, we will focus ourselves on the resolution of equation \eqref{eq}. Among others, we need to use the fact that the roots of the following equation
$$\left(\sinh(z) + z\right)\left(\sinh(z) - z\right) = 0,$$
in $\CC_+:=\{w \in \CC : \text{Re}(w) > 0\}$, constitute a family of complex numbers $(z_j)_{j\geqslant 1}$ such that 
\begin{equation*}
\tau := \min_{j\geqslant 1}\left|\text{Im}(z_j)\right| > 0  \quad \text{and} \quad |z_j| \longrightarrow + \infty.
\end{equation*}
These roots are computed in \cite{fadle} with $\tau \simeq 4.21239$.

Our main result is the following.
\begin{Th}\label{Th principal}
Let $\F \in L^p(0,+\infty;X)$ and assume that 
\begin{equation}\label{hyp inv sum}
\omega\nu < \tau.
\end{equation}
Then, there exists $\rho_0 > 0$ such that for all $\rho \in (0,\rho_0]$, the abstract equation 
\begin{equation*}
\left(\L_1 + \L_2\right) V + k \rho^2 \left(\P_1 + \P_2 \right)V = \F,
\end{equation*}
has a unique classical solution $V \in L^p(0,+\infty;X)$, that is 
$$V \in W^{2,p}(0,+\infty;X)\cap L^p(0,+\infty;D(\A)).$$
In particular, $\L_1 + \L_2$ is a closed operator and $V \in D(\L_1 + \L_2)$.
\end{Th}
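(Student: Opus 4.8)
I would regard \eqref{eq} as a perturbation of the principal equation $(\L_1+\L_2)V=\F$. The first and central task is to show that, under \eqref{hyp inv sum}, $\L_1+\L_2$ (with domain $D(\L_1)\cap D(\L_2)$, consisting of the $V\in W^{2,p}(0,+\infty;X)\cap L^p(0,+\infty;D(\A))$ with $V(0)=V(+\infty)=0$) is closed, boundedly invertible on $E:=L^p(0,+\infty;X)$, and has maximal regularity, i.e. $(\L_1+\L_2)^{-1}$ maps $E$ into $W^{2,p}(0,+\infty;X)\cap L^p(0,+\infty;D(\A))$; once this is known, $k\rho^2(\P_1+\P_2)$ is absorbed by a Neumann series for $\rho$ small.

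\textbf{Step 1: the lateral operator and the Fadle--Papkovich roots.} The heart of the proof is the analysis of $\A$ on $X$ (recall $\L_2=-\A$), which acts only in the $\theta$-variable and has compact resolvent, hence discrete spectrum. I would compute $(\A-\sigma I)^{-1}$ explicitly: eliminating $\psi_2$ from $(\A-\sigma)(\psi_1,\psi_2)=(g_1,g_2)$ gives a fourth-order ODE for $\psi_1$ on $(0,\omega)$ whose symbol factors, together with the clamped conditions $\psi_1=\psi_1'=0$ at $0$ and $\omega$; rescaling $\theta=\omega s$ and using $\cos(i\zeta)=\cosh\zeta$, $\sin(i\zeta)=i\sinh\zeta$, the characteristic determinant reduces, up to nonvanishing factors, to $(\sinh z+z)(\sinh z-z)=0$ with $z$ affinely related to $\omega\sqrt{\sigma}$. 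One reads off that $\sigma(\A)$ lies in a sector of half-angle $<\pi/2$ around the positive reals, that $0\in\rho(\A)$, and — by the spectral mapping theorem for the holomorphic functional calculus — that $\sqrt{\A}$ is well defined, sectorial of half-angle $<\pi/4$, invertible, with $\operatorname{Re}\sigma(\sqrt{\A})\geqslant\tau/\omega$ (the bound being $\min_{j}|\operatorname{Im}(z_j)|/\omega$). I would also prove that $\A$, hence $\sqrt{\A}$ and its translates, admit bounded imaginary powers with power angle $<\pi/2$; I expect \emph{this} to be the main obstacle, to be handled by a bounded $H^\infty$-calculus or by reducing $\A$ to the operator $d^2/d\theta^2$ on $(0,\omega)$, which has BIP. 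The hypothesis \eqref{hyp inv sum} now reads $\operatorname{Re}\sigma(\sqrt{\A})\geqslant\tau/\omega>\nu$, so that $\sqrt{\A}-\nu I$ is again sectorial of half-angle $<\pi/2$, invertible, with BIP.

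\textbf{Step 2: inverting $\L_1+\L_2$.} As $\partial_t$ and $\A$ commute, one has the factorization
\begin{equation*}
\L_1+\L_2=(\partial_t-\nu I)^2-\A=\bigl(\partial_t-\nu I-\sqrt{\A}\bigr)\bigl(\partial_t-\nu I+\sqrt{\A}\bigr);
\end{equation*}
note that $\L_1$ alone is \emph{not} sectorial, so the sum must be treated through this factorization rather than by applying the sum theorems directly to the pair $(\L_1,\L_2)$. Given $\F\in E$, I would first solve $\bigl(\partial_t-(\nu I+\sqrt{\A})\bigr)W=\F$: since $\nu I+\sqrt{\A}$ has spectrum in the open right half-plane, the unique $L^p$-solution is $W(t)=-\int_t^{+\infty}e^{-(s-t)(\nu I+\sqrt{\A})}\F(s)\,ds$ (no boundary data enters); then solve $\bigl(\partial_t-(\nu I-\sqrt{\A})\bigr)V=W$ with $V(0)=0$: since $\nu I-\sqrt{\A}=-(\sqrt{\A}-\nu I)$ generates the exponentially decaying analytic semigroup $e^{-t(\sqrt{\A}-\nu I)}$ (here \eqref{hyp inv sum} is used), the unique such solution is $V(t)=\int_0^t e^{-(t-s)(\sqrt{\A}-\nu I)}W(s)\,ds$. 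Hence $(\L_1+\L_2)^{-1}\in\mathcal{L}(E)$. For the maximal regularity I would use the Da Prato--Grisvard and Dore--Venni strategies: $E$ is UMD ($1<p<+\infty$, $X$ a closed subspace of a product of $L^p$-spaces), the half-line derivative has bounded imaginary powers with power angle $\pi/2$, and $\sqrt{\A}\pm\nu I$ have BIP with power angle $<\pi/2$; applying Dore--Venni to each of the two first-order equations (commuting operators, power angles summing to $<\pi$) gives $\sqrt{\A}\,W,\ \A V\in L^p$, whence $(\partial_t-\nu I)^2V=\F+\A V\in L^p$ and $V\in W^{2,p}(0,+\infty;X)\cap L^p(0,+\infty;D(\A))$ with $\|V\|\leqslant C\|\F\|_E$. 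The Da Prato--Grisvard method gives, more elementarily through $e^{-t(\sqrt{\A}-\nu I)}$, strict solutions for $\F$ in the natural interpolation space, together with the closedness of $\L_1+\L_2$.

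\textbf{Step 3: the perturbation.} Setting $V=(\L_1+\L_2)^{-1}W$, \eqref{eq} becomes $\bigl(I+k\rho^2(\P_1+\P_2)(\L_1+\L_2)^{-1}\bigr)W=\F$. Here $\P_1=-e^{-2t}\A_0$ is bounded on $E$, since $\A_0\in\mathcal{L}(X)$ (its entries are of order at most $2$ in $\theta$, controlled by the $W^{2,p}(0,\omega)$-norm), while $\P_2=-e^{-2t}\mathcal{B}_2$ involves only one $t$-derivative, so $\P_2(\L_1+\L_2)^{-1}$ is bounded on $E$ because $(\L_1+\L_2)^{-1}$ maps $E$ into $W^{2,p}(0,+\infty;X)\subset D(\P_2)$. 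Thus $\|(\P_1+\P_2)(\L_1+\L_2)^{-1}\|_{\mathcal{L}(E)}\leqslant C$ with $C$ independent of $\rho$; choosing $\rho_0:=(kC)^{-1/2}$, a Neumann series inverts $I+k\rho^2(\P_1+\P_2)(\L_1+\L_2)^{-1}$ on $E$ for every $\rho\in(0,\rho_0]$, producing a unique $W\in E$ and hence a unique classical solution $V=(\L_1+\L_2)^{-1}W\in W^{2,p}(0,+\infty;X)\cap L^p(0,+\infty;D(\A))$ of \eqref{eq}; any classical solution produces such a $W$, so $V$ is unique, and closedness of $\L_1+\L_2$ with $V\in D(\L_1+\L_2)$ comes from Step~2. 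In summary, the whole difficulty lies in Step~1 — the sectoriality and bounded imaginary powers of the $2\times2$ operator matrix $\A$, and the extraction from the Fadle--Papkovich equation of the estimate $\operatorname{Re}\sigma(\sqrt{\A})\geqslant\tau/\omega$ that, under \eqref{hyp inv sum}, places $\sqrt{\A}-\nu I$ in the class to which the Da Prato--Grisvard and Dore--Venni theorems apply — while Steps~2 and~3 are a now-standard use of the sum theorems and an elementary perturbation argument.
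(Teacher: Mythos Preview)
Your approach is essentially correct but takes a different route from the paper's in two respects. First, the paper \emph{does} apply the Da Prato--Grisvard theorem directly to the pair $(\L_1,\L_2)$: although $\L_1$ is not sectorial in the usual sense (its resolvent estimate holds only for $|\lambda|$ large), the extended version of the theorem due to Grisvard accommodates this and yields invertibility of the \emph{closure} $\overline{\L_1+\L_2}$; convexity (interpolation) inequalities then give $D(\overline{\L_1+\L_2})\subset W^{1,p}(0,+\infty;X)\cap L^p\bigl(0,+\infty;[W^{3,p}\cap W^{2,p}_0]\times L^p\bigr)$, which is already enough to make $(\P_1+\P_2)(\overline{\L_1+\L_2})^{-1}$ bounded and to obtain a \emph{strong} solution for small $\rho$. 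Second, to upgrade to a classical solution the paper does not keep $\nu$ in the factorization as you do; it moves the lower-order terms $2\nu V'-\nu^2 V$ and the perturbation to the right-hand side (using the partial regularity just obtained) and applies Dore--Venni at the \emph{second-order} level, to $-\partial_t^2+\A$ with the Dirichlet condition at $t=0$, via the cited result of Eltaief--Maingot with $L_1=L_2=-\sqrt{\A}$.

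Your direct factorization $(\partial_t-\nu-\sqrt{\A})(\partial_t-\nu+\sqrt{\A})$ is more economical and bypasses the detour through the closure, but note a subtlety you glide over: for the two first-order equations you must use \emph{different} realizations of $\partial_t$ on $(0,+\infty)$ --- the one with $V(0)=0$ for the second factor, and the one with domain $W^{1,p}(0,+\infty;X)$ and no boundary condition (generator of the left-shift semigroup) for the first --- both with BIP of angle $\pi/2$, but not the same operator. With that point made precise your route reaches the same conclusion; the paper's two-stage argument has the advantage of separating cleanly the Da Prato--Grisvard input (closability and partial regularity) from the Dore--Venni input (maximal regularity and closedness of the sum), while yours delivers the full $W^{2,p}\cap L^p(D(\A))$ regularity in one stroke.
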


This second part is organized as follows. \refS{Sect Def} is devoted to some recalls. In \refS{Sect Spectral study of L1 and L2}, we analyze the spectral properties of operators $\L_1$ and $\L_2$ in view to study the invertibility of the closedness of the sum $\overline{\L_1+\L_2}$ in \refS{Sect study of L1+L2}. In \refS{Sect Back to Abstract Pb}, by considering that operator $k\rho^2\left(\P_1+\P_2\right)$ is a perturbation, we deduce the existence and the uniqueness of a strong solution of equation \eqref{eq}. Finally, \refS{Sect Proof of main Th} is devoted to the proof of our main result given in \refT{Th principal}.

\section{Definitions and prerequisites} \label{Sect Def}

\subsection{The class of Bounded Imaginary Powers of operators}

\begin{Def}\label{Def Banach}
A Banach space $X$ is a UMD space if and only if for all $p\in(1,+\infty)$, the Hilbert transform is bounded from $L^p(\RR,X)$ into itself (see \cite{bourgain} and \cite{burkholder}).
\end{Def}
\begin{Def}\label{Def op Sect}
Let $\alpha \in(0,\pi)$. Sect($\alpha$) denotes the space of closed linear operators $T_1$ which satisfying
$$
\begin{array}{l}
i)\quad \sigma(T_1)\subset \overline{S_{\alpha}},\\ \ecart
ii)\quad\forall~\alpha'\in (\alpha,\pi),\quad \sup\left\{\|\lambda(\lambda\,I-T_1)^{-1}\|_{\L(X)} : ~\lambda\in\CC\setminus\overline{S_{\alpha'}}\right\}<+\infty,
\end{array}
$$
where
\begin{equation}\label{defsector}
S_\alpha\;:=\;\left\{\begin{array}{lll}
\left\{ z \in \CC : z \neq 0 ~~\text{and}~~ |\arg(z)| < \alpha \right\} & \text{if} & \alpha \in (0, \pi] \\ \ecart
\,(0,+\infty) & \text{if} & \alpha = 0,
\end{array}\right.
\end{equation} 
see \cite{haase}, p. 19. Such an operator $T_1$ is called sectorial operator of angle $\alpha$.
\end{Def}
\begin{Rem}
From \cite{komatsu}, p. 342, we know that any injective sectorial operator~$T_1$ admits imaginary powers $T_1^{is}$ for all $s\in\RR$; but in general, $T_1^{is}$ is not bounded.
\end{Rem} 
\begin{Def}\label{Def BIP}
Let $\theta \in [0, \pi)$. We denote by BIP$(X,\theta)$, the class of sectorial injective operators $T_2$ such that
\begin{itemize}
\item[] $i) \quad ~~\overline{D(T_2)} = \overline{R(T_2)} = X,$

\item[] $ii) \quad ~\forall~ s \in \RR, \quad T_2^{is} \in \L(X),$

\item[] $iii) \quad \exists~ C \geqslant 1 ,~ \forall~ s \in \RR, \quad ||T_2^{is}||_{\L(X)} \leqslant C e^{|s|\theta}$,
\end{itemize}
see~\cite{pruss-sohr}, p. 430.
\end{Def}

\subsection{Recall on the sum of linear operators}\label{sect M1 M2}

Let us fix a pair of two closed linear densely defined operators $\mathcal{M}_{1}$ and $\mathcal{M}_{2}$ in a general Banach space $\mathcal{E}.$ We note their domains by $D(\mathcal{M}_{1})$ and $D(\mathcal{M}_{2})$ respectively. Then we can define their sum by 
\begin{equation*}
\left\{ \begin{array}{l}
\mathcal{M}_{1}w+\mathcal{M}_{2}w \\ 
w\in D(\mathcal{M}_{1})\cap D(\mathcal{M}_{2}).
\end{array}\right. 
\end{equation*}
We assume the following hypotheses
\begin{itemize}

\item[$(H_1)$] There exist $\theta_{\M_1} \in [0,\pi)$, $\theta
_{\M_{2}}\in [0,\pi)$, $C > 0$ and $R > 0$ such that  
\begin{equation*}
\left\{ 
\begin{array}{l}
\rho \left( \mathcal{M}_{1}\right) \supset \Sigma _{1,R}=\left\{ z\in \CC\setminus \left\{ 0\right\} : |z| \geqslant R \text{ and }\left\vert \arg (z)\right\vert <\pi -\theta
_{\M_{1}}\right\}  \\ 
\forall \,z\in \Sigma_{1,R}, \quad \left\Vert \left( \mathcal{M}_{1}-zI\right) ^{-1}\right\Vert \leqslant \dfrac{C}{\left\vert z\right\vert},
\end{array}\right. 
\end{equation*}
and
\begin{equation*}
\left\{ 
\begin{array}{l}
\rho \left( \mathcal{M}_{2}\right) \supset \Sigma _{2,R}=\left\{ z\in \CC\setminus \left\{ 0\right\} : |z| \geqslant R \text{ and }\left\vert \arg (z)\right\vert <\pi -\theta_{\M_{2}}\right\}  \\ 
\forall \,z\in \Sigma_{2,R}, \quad \left\Vert \left( \mathcal{M}_{2}-zI\right) ^{-1}\right\Vert \leqslant \dfrac{C}{\left\vert z\right\vert},
\end{array}\right.  
\end{equation*}
with
\begin{equation*}
\theta _{\M_{1}}+\theta _{\M_{2}}<\pi.
\end{equation*}

\item[$(H_2)$] $\sigma(\M_1) \cap \sigma(-\M_2) = \emptyset$.

\item[$(H_3)$] The resolvents of $\mathcal{M}_{1}$ and $\mathcal{M}_{2}$
commute, that is
\begin{equation*}
\left( \mathcal{M}_{1}-\lambda I\right) ^{-1}\left( \mathcal{M}_{2}-\mu
I\right) ^{-1}=\left( \mathcal{M}_{2}-\mu I\right) ^{-1}\left( \mathcal{M}_{1}-\lambda I\right) ^{-1},
\end{equation*}
for all $\lambda \in \rho \left( \mathcal{M}_{1}\right) $ and all $\mu \in
\rho \left( \mathcal{M}_{2}\right)$.
\end{itemize}

\begin{Rem}
Note that from $(H_2)$, we have $\rho \left(\mathcal{M}_{1}\right) \cup \rho \left(- \mathcal{M}_{2}\right) = \CC$ and in particular $\M_1$ or $\M_2$ is boundedly invertible.  
\end{Rem}

\begin{Th}[\cite{daprato-grisvard}, \cite{grisvard 2}] \label{Th daprato-grisvard}
Assume that $(H_1)$, $(H_2)$ and $(H_3)$ hold. Then, operator $\M_{1}+\M_{2}$ is closable. Its closure $\overline{\mathcal{M}_{1}+\mathcal{M}_{2}}$ is boundedly invertible and 
\begin{equation}\label{int M1+M2}
\left( \overline{\mathcal{M}_{1}+\mathcal{M}_{2}}\right)^{-1}=\frac{-1}{2i \pi }\int\limits_{\Gamma }\left( \mathcal{M}_{1}-zI\right)^{-1}\left( \mathcal{M}_{2}+zI\right) ^{-1}dz;
\end{equation}
where $\Gamma$ is a path which separates $\sigma \left(\mathcal{M}_{1}\right)$ and $\sigma \left(-\mathcal{M}_{2}\right)$ and joins $\infty e^{-i\theta _{0}}$ to $\infty e^{i\theta _{0}}$ with $\theta _{0}$ such that
\begin{equation*}
\theta _{\M_{1}}<\theta _{_{0}}<\pi -\theta _{\M_{2}}.
\end{equation*}
\end{Th}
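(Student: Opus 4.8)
The plan is to show that the bounded operator given by the right-hand side of \eqref{int M1+M2},
$$S \;:=\; \frac{-1}{2i\pi}\int_{\Gamma}\left(\M_1 - zI\right)^{-1}\left(\M_2 + zI\right)^{-1}dz,$$
is exactly the inverse of $\overline{\M_1+\M_2}$; throughout I write $T := \M_1+\M_2$ with $D(T) = D(\M_1)\cap D(\M_2)$. First I would construct $\Gamma$ and prove $S\in\L(\E)$. By $(H_1)$, away from a bounded set $\sigma(\M_1)$ sits in the sector of half-angle $\theta_{\M_1}$ around $\RR_-$ and $\sigma(-\M_2)$ in the sector of half-angle $\theta_{\M_2}$ around $\RR_+$; since $\theta_{\M_1}+\theta_{\M_2}<\pi$ these are angularly separated near infinity, while $(H_2)$ keeps them disjoint on the bounded part. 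Hence a path $\Gamma$ as described exists inside $\rho(\M_1)\cap\{z:-z\in\rho(\M_2)\}$, and the resolvent estimates of $(H_1)$ give $\|(\M_1-zI)^{-1}(\M_2+zI)^{-1}\|\leqslant C^2/|z|^2$ on $\Gamma$, so the Bochner integral converges in $\L(\E)$. A Cauchy-theorem argument, using the holomorphy and the $O(|z|^{-2})$ decay of the integrand in the common resolvent region between two admissible paths, shows $S$ does not depend on the choice of $\Gamma$.

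Next I would prove the left-inverse identity $S\,T w = w$ for all $w\in D(T)$. Splitting $(\M_1+\M_2)w = (\M_1-zI)w + (\M_2+zI)w$ and commuting resolvents through $(H_3)$, the product telescopes; after the substitutions $(\M_1-zI)^{-1}w = z^{-1}[(\M_1-zI)^{-1}\M_1 w - w]$ and $(\M_2+zI)^{-1}w = z^{-1}[w-(\M_2+zI)^{-1}\M_2 w]$ the singular terms $\pm z^{-1}w$ cancel, leaving the absolutely convergent integral $\frac{-1}{2i\pi}\int_\Gamma z^{-1}[(\M_1-zI)^{-1}\M_1 w - (\M_2+zI)^{-1}\M_2 w]\,dz$. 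Closing each summand on the side of $\Gamma$ where it is holomorphic and evaluating the residue at $z=0$ — the Remark guarantees $0\in\rho(\M_1)$ or $0\in\rho(-\M_2)$, which fixes which summand captures the pole and makes the residue equal $w$ — gives $S T w = w$. In particular $T$ is injective.

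The main difficulty is the reverse inclusion $R(S)\subseteq D(\overline{T})$: in general $Sf\notin D(T)$, and this is precisely why one must pass to the closure. To handle it I would regularize. Set $Q_n := n^2(nI-\M_1)^{-1}(nI-\M_2)^{-1}$; by $(H_1)$ the factors $n(nI-\M_j)^{-1}$ are uniformly bounded and converge strongly to $I$ on $\E=\overline{D(\M_j)}$, so $Q_n\to I$ strongly, $Q_n$ commutes with $S$ by $(H_3)$, and $Q_n$ maps $\E$ into $D(\M_1)\cap D(\M_2)=D(T)$. Fixing $f\in\E$ and putting $w_n := Q_n S f = S(Q_n f)\in D(T)$, I would rerun the computation of the previous paragraph with the two extra resolvent factors present: these improve every term to $O(|z|^{-2})$, legitimize moving $T$ under the integral sign, and, after the analogous residue evaluation, yield the key identity $T w_n = Q_n f$. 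I expect this interchange of the unbounded operator $T$ with the contour integral to be the technical heart of the argument.

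Finally I would collect the conclusions. From $T w_n = Q_n f$ with $w_n = Q_n S f$: if $Sf=0$ then $w_n=0$, so $Q_n f = 0$ for all $n$, whence $f = \lim_n Q_n f = 0$; thus $S$ is injective. Since $S$ is bounded and injective, $S^{-1}$ is closed, and the left-inverse identity gives $T\subseteq S^{-1}$; hence $T$ is closable with $\overline{T}\subseteq S^{-1}$. Moreover $w_n\to Sf$ and $T w_n = Q_n f\to f$, so the closedness of $\overline{T}$ yields $Sf\in D(\overline{T})$ and $\overline{T}\,Sf=f$, i.e. $\overline{T}\,S=I$ and $R(S)\subseteq D(\overline{T})$. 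Combined with $\overline{T}\subseteq S^{-1}$ and the injectivity of $S$, this forces $D(\overline{T})=R(S)$ and $\overline{T}=S^{-1}$. Therefore $\overline{\M_1+\M_2}$ is boundedly invertible with inverse $S$, which is exactly formula \eqref{int M1+M2}.
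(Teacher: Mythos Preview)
The paper does not give its own proof of this theorem: immediately after the statement it only records that the result is Theorem~3.7 in \cite{daprato-grisvard} for $R=0$ and is extended to $R\geqslant 0$ in \cite{grisvard 2}, noting that in the latter case the curve $\Gamma$ need not be connected. There is therefore nothing to compare against beyond that remark.

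Your sketch is the standard Da~Prato--Grisvard argument (construct the Dunford-type integral $S$, verify $STw=w$ on $D(T)$ by a residue computation, then use the Yosida-type regularizers $Q_n=n^2(nI-\M_1)^{-1}(nI-\M_2)^{-1}$ to show $T(Q_nSf)=Q_nf$ and pass to the limit to get $\overline{T}\,S=I$), and it is essentially correct. The one point where the paper's remark is relevant to your write-up is the construction of $\Gamma$: under $(H_1)$ with $R>0$ the spectra $\sigma(\M_1)$ and $\sigma(-\M_2)$ are only angularly separated for $|z|\geqslant R$, and $(H_2)$ merely guarantees they are disjoint inside the disk $|z|<R$, not that a single connected arc can thread between them there. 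In that situation $\Gamma$ is taken as the union of the unbounded rays together with one or more bounded closed curves isolating the finite spectral components, and your contour-deformation and residue arguments go through component by component. You allude to $(H_2)$ handling the bounded part but phrase $\Gamma$ as a single path; flagging the possible disconnectedness would align your sketch with the version actually invoked here.
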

This Theorem is proved in \cite{daprato-grisvard} (Theorem 3.7, p. 324), when $R = 0$ and has been extended to the case $R\geqslant 0$ in \cite{grisvard 2} (Theorem 2.1, p. 7). In this last case, the curve $\Gamma$ does not need to be connected. 

\begin{Cor} \label{Cor inv fermeture somme}
Assume $(H_1)$, $(H_2)$ and $(H_3)$ hold. Let $i = 1,2$ and $\E_i$ a Banach space with $D(\M_i) \hookrightarrow \E_i \hookrightarrow \mathcal{E}$ such that there exist $C>0$, $\delta \in (0,1)$ satisfying 
\begin{equation}\label{ineg norme w}
\left\{ 
\begin{array}{l}
\left\Vert w\right\Vert _{\mathcal{E}_i}\leqslant C\left( \left\Vert
w\right\Vert _{\mathcal{E}}+\left\Vert w\right\Vert _{\mathcal{E}}^{1-\delta}\left\Vert \mathcal{M}_{i}w\right\Vert _{\mathcal{E}}^{\delta }\right)  \\ 
\text{for every }w\in D(\mathcal{M}_{i}),
\end{array}\right. 
\end{equation}
then $D\left( \overline{\mathcal{M}_{1}+\mathcal{M}_{2}}\right) \subset \E_i.$
\end{Cor}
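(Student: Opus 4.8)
The plan is to leverage \refT{Th daprato-grisvard} to get a concrete integral representation of $\left(\overline{\M_1+\M_2}\right)^{-1}$ and then estimate its range in the $\E_i$-norm using the interpolation-type inequality \eqref{ineg norme w}. Fix $i\in\{1,2\}$ and let $w\in D\left(\overline{\M_1+\M_2}\right)$. Since $\overline{\M_1+\M_2}$ is boundedly invertible, we may write $w = \left(\overline{\M_1+\M_2}\right)^{-1}g$ for $g = \left(\overline{\M_1+\M_2}\right)w\in\E$; the goal is to bound $\|w\|_{\E_i}$ by $C\|g\|_{\E}$, which by the continuous embedding $D(\M_i)\hookrightarrow\E_i$ and the closed graph theorem actually suffices to conclude $D\left(\overline{\M_1+\M_2}\right)\subset\E_i$ (it even gives a continuous embedding into $\E_i$).

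The key step is the following: in \eqref{int M1+M2}, factor the integrand so that one resolvent power of $\M_i$ is ``available'' to feed into \eqref{ineg norme w}. Concretely, I would apply $\M_1$ (resp.\ $\M_2$) formally to the integral and show, using the resolvent identity $\M_1(\M_1-zI)^{-1} = I + z(\M_1-zI)^{-1}$, that $\M_i$ acting on $w$ is represented by a modified but still absolutely convergent integral along $\Gamma$ — the extra factor of $z$ is compensated by the product of the two resolvent decays $\frac{C}{|z|}\cdot\frac{C}{|z|}$ from $(H_1)$, which makes the original integrand $O(|z|^{-2})$ and hence leaves room. This yields estimates of the shape $\|w\|_{\E}\leqslant C\|g\|_{\E}$ and $\|\M_i w\|_{\E}\leqslant C\|g\|_{\E}$ — the first is just bounded invertibility, the second requires checking that $\M_i\left(\overline{\M_1+\M_2}\right)^{-1}$ is bounded on $\E$, which follows by commutation $(H_3)$ and the convergence just discussed (this is essentially the statement that $\M_i$ is relatively bounded with respect to the sum, a standard consequence of the Da Prato--Grisvard construction). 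Feeding both bounds into \eqref{ineg norme w} gives
\begin{equation*}
\|w\|_{\E_i}\leqslant C\left(\|g\|_{\E} + \|g\|_{\E}^{1-\delta}\|g\|_{\E}^{\delta}\right) = C'\|g\|_{\E},
\end{equation*}
and since $g$ ranges over all of $\E$ as $w$ ranges over $D\left(\overline{\M_1+\M_2}\right)$, this proves the inclusion.

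The main obstacle I anticipate is making rigorous the claim that $\M_i\left(\overline{\M_1+\M_2}\right)^{-1}\in\L(\E)$, i.e.\ that differentiating the integral \eqref{int M1+M2} under the $\M_i$ and interchanging with the contour integral is legitimate. Since $w$ need only belong to the closure's domain and not to $D(\M_1)\cap D(\M_2)$, one cannot naively apply the unbounded operator; the clean way is to first do the computation for $w$ in the dense subspace where everything converges absolutely (e.g.\ $R(\M_i^{-1})$ intersected appropriately, or by inserting an auxiliary resolvent regularization $\M_i(\M_i - \varepsilon^{-1})^{-1}$ and passing to the limit), obtain the uniform bound there, and then extend by density and closedness of $\M_i$. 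A secondary technical point is to verify that the rotated/shifted contour $\Gamma$ from \refT{Th daprato-grisvard} keeps the integrand in the sectors where the $(H_1)$ resolvent bounds $\frac{C}{|z|}$ apply along the whole path (including, in the $R>0$ case, around the possibly disconnected portion near the origin), so that the dominating function $|z|^{-2}\,|dz|$ is genuinely integrable; this is routine but must be stated. Once these points are settled, the interpolation inequality does the rest mechanically.
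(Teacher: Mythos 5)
Your overall strategy---use the Da Prato--Grisvard integral representation together with the interpolation inequality---is the right one, but the way you deploy it contains a genuine gap. You reduce the problem to showing $\M_i\left(\overline{\M_1+\M_2}\right)^{-1}\in\L(\E)$, i.e.\ to the two separate estimates $\|w\|_{\E}\leqslant C\|g\|_{\E}$ and $\|\M_i w\|_{\E}\leqslant C\|g\|_{\E}$, which you then feed into \eqref{ineg norme w}. The second estimate is false in general under $(H_1)$--$(H_3)$ alone: after inserting $\M_1(\M_1-zI)^{-1}=I+z(\M_1-zI)^{-1}$ into \eqref{int M1+M2}, the resulting integrand is only $O(|z|^{-1})$ along $\Gamma$ (the extra factor $|z|$ eats one of the two $C/|z|$ decays, leaving $|z|\cdot|z|^{-2}=|z|^{-1}$, so there is no ``room''), and $\int_\Gamma|z|^{-1}\,|dz|$ diverges at infinity. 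This is not merely a technicality about interchanging the operator with the contour integral: the boundedness of $\M_i\left(\overline{\M_1+\M_2}\right)^{-1}$ is essentially equivalent to the closedness of $\M_1+\M_2$, which is exactly what Da Prato--Grisvard does \emph{not} provide and what the rest of the paper must obtain by other means (Dore--Venni/BIP). Your fallback of first proving the bound on a dense subspace cannot rescue this, since a uniform bound on a dense subspace would extend by density and again force closedness of the sum.

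The fix is to never apply $\M_i$ to $w$ itself. Instead, apply \eqref{ineg norme w} \emph{under the integral sign} to each integrand $w_z:=(\M_1-zI)^{-1}(\M_2+zI)^{-1}\xi$, which genuinely lies in $D(\M_1)$. The geometric-mean structure of the inequality is then exactly what saves the convergence:
\begin{equation*}
\left\Vert w_z\right\Vert_{\E}^{1-\delta}\left\Vert \M_1 w_z\right\Vert_{\E}^{\delta}
= O\!\left(|z|^{-2(1-\delta)}\right)\cdot O\!\left(|z|^{-\delta}\right)
= O\!\left(|z|^{-(1+(1-\delta))}\right),
\end{equation*}
which is integrable along $\Gamma$ precisely because $\delta<1$. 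Hence the integral \eqref{int M1+M2} converges absolutely in the Banach space $\E_i$, and since $\E_i\hookrightarrow\E$ its value $\left(\overline{\M_1+\M_2}\right)^{-1}\xi$ belongs to $\E_i$. This is the paper's argument; it uses the interpolation inequality at the only place where membership in $D(\M_i)$ is actually available.
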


\begin{proof}
It is enough to prove that the integral in \eqref{int M1+M2} converges in $\E_1$. For all $\xi \in \E$, we have
$$\left\|\int\limits_{\Gamma }\left( \mathcal{M}_{1}-zI\right)^{-1}\left( \mathcal{M}_{2}+zI\right) ^{-1} \xi dz\right\|_{\E_1} \leqslant \int\limits_{\Gamma }\left\|\left( \mathcal{M}_{1}-zI\right)^{-1} \left( \mathcal{M}_{2}+zI\right) ^{-1}\xi\right\|_{\E_1} |dz|,$$
then, applying \eqref{ineg norme w}, we obtain
$$\begin{array}{l}
\dis \left\|\left( \mathcal{M}_{1}-zI\right)^{-1} \left( \mathcal{M}_{2}+zI\right) ^{-1}\xi\right\|_{\E_1} \\ \ecart
\leqslant \dis C \left\|\left( \mathcal{M}_{1}-zI\right)^{-1} \left( \mathcal{M}_{2}+zI\right) ^{-1}\xi\right\|_{\E} \\ \ecart
\dis + C \left\|\left( \mathcal{M}_{1}-zI\right)^{-1} \left( \mathcal{M}_{2}+zI\right) ^{-1}\xi\right\|_{\E}^{1-\delta} \left\|\M_1 \left( \mathcal{M}_{1}-zI\right)^{-1} \left( \mathcal{M}_{2}+zI\right) ^{-1}\xi\right\|_{\E}^{\delta};
\end{array}$$
Now, for all $z \in \Gamma$, we have
$$\begin{array}{l}
\left\|\left( \mathcal{M}_{1}-zI\right)^{-1} \left( \mathcal{M}_{2}+zI\right) ^{-1}\xi\right\|_{\E}^{1-\delta} \left\|\M_1 \left( \mathcal{M}_{1}-zI\right)^{-1} \left( \mathcal{M}_{2}+zI\right) ^{-1}\xi\right\|_{\E}^{\delta} \\ \ecart
\leqslant \dfrac{\left(C_1(\theta_1) C_2(\theta_2)\right)^{1-\delta}}{|z|^{2(1-\delta)}} \dfrac{C_1(\theta_1)^\delta C_2(\theta_2)^\delta}{|z|^\delta} \|\xi\|_{\E} = \dfrac{C_1(\theta_1) C_2(\theta_2)}{|z|^{1+(1-\delta)}} \|\xi\|_{\E},
\end{array}$$
from which we deduce the convergence of the integral in \eqref{int M1+M2}. The same result holds true replacing $\M_1$ by $\M_2$.
\end{proof}

\section{Spectral study of operators}\label{Sect Spectral study of L1 and L2}

In all the sequel, in view to apply the above results, we will consider the following particular Banach space 
$$\mathcal{E}=L^p(0,+\infty;X),$$
equipped with its natural norm.

\subsection{Study of operator $\L_1$}

We study the spectral equation 
\begin{equation*}
\mathcal{L}_{1} V - \lambda V = R,
\end{equation*} 
where $V\in D(\mathcal{L}_{1})$, $R \in \mathcal{E}$ and $\lambda \in \CC$ (which will be precised below), that is
\begin{equation}\label{Pb Amine}
\left\{ 
\begin{array}{l}
V''(t)-2\nu V'(t)+(\nu ^{2}-\lambda) V(t) = R(t), \quad t>0 \\ \ecart
V(0)=0,  ~~V(+\infty) = 0.
\end{array}
\right.
\end{equation}
We set 
$$\Sigma_{\nu} = \{z \in \CC \setminus \RR_-: \text{Re}(\sqrt{z}) > \nu \}.$$
Now, let us precise this set. For all $z = x + i y \in \CC \setminus \RR_-$, we have
$$\text{Re}(\sqrt{z}) > \nu  \Longleftrightarrow \sqrt{\frac{|z| + \text{Re}(z)}{2}} > \nu\Longleftrightarrow \sqrt{x^2 + y^2} > 2 \nu^2 - x.$$ 
\begin{itemize}
\item First case : if $x > \nu^2$, we have $\sqrt{x^2 + y^2} + x \geqslant  2x >  2\nu^2,$ then Re$(\sqrt{z}) > \nu$.

\item Second case : if $x \leqslant \nu^2$, then $y^2 + 4 \nu^2 x - 4 \nu^4 > 0$. Thus, we deduce that $\Sigma_{\nu}$ is strictly outside the parabola of equation
$$y^2 + 4 \nu^2 x - 4 \nu^4 = 0,$$
turned towards the negative real axis and passing through the points $(\nu^2,0)$, $(0,2\nu^2)$ and $(0,-2\nu^2)$.
\end{itemize}

\begin{figure}[ht]
  \begin{center}
  \begin{tikzpicture}[scale=0.2]
	\filldraw [fill=gray!20, line width=1, domain=-10:4, samples=500] plot(\x,{sqrt(64-16*\x)})--(4,0)--(-10,0);
	\filldraw [fill=gray!20, line width=1, domain=-10:4, samples=500] plot(\x,{-sqrt(64-16*\x)})--(4,0)--(-10,0);
		\draw [line width=1,->,>=latex] (-15,0) -- (15,0) node[pos=1,below] {$x$};
	\draw [line width=1,->,>=latex] (0,-15) -- (0,15) node[pos=1,right] {$y$};
	\node at (-0.7,-0.9) {$0$};
	\node at (5.2,-1.2) {$\nu^2$};
	\node at (1.8,9) {$2\nu^2$};
	\node at (2.2,-8) {-$2\nu^2$};
	\node at (10,7) {$\Sigma_\nu$};
  \end{tikzpicture}
  \end{center}
\caption{This figure represents $\Sigma_\nu$.}
\end{figure}
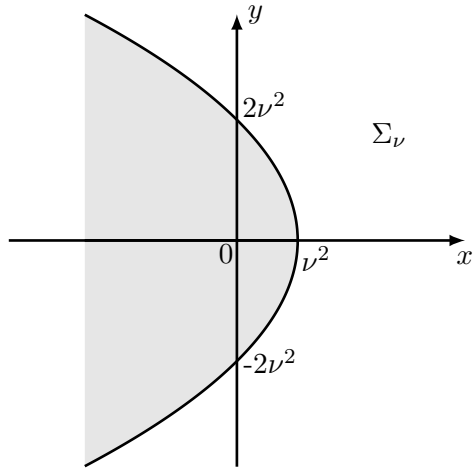

Now, let $\varepsilon_{\L_1}$ be a small fixed positive number and consider the following set
\begin{equation}\label{lambda estimation resolvante}
\Sigma_{\L_1} := \left\{\lambda \in \Sigma_\nu, \quad |\arg(\lambda)| \leqslant \pi - 2\varepsilon_{\L_1} \quad \text{and} \quad |\lambda| \geqslant \frac{4\nu^2}{\sin^2\left(\varepsilon_{\L_1}\right)}\right\}.
\end{equation}
We then obtain the following proposition.
\begin{Prop}\label{Prop L1}
The linear operator $\mathcal{L}_{1}$ is closed and densely defined in $W^{2,p}(0,+\infty;X)$. Moreover, there exists a constant $M_{\L_1} > 0$ such that for all $\lambda \in \Sigma_{\L_1}$, operator $\L_1 - \lambda I$ is invertible with bounded inverse and 
\begin{equation*}
\left\Vert \left(\L_1 - \lambda I\right)^{-1}\right\Vert_{\L(\mathcal{E})} \leqslant \frac{M_{\L_1}}{ |\lambda|}.
\end{equation*} 
Therefore, assumption $(H_1)$ in \refS{sect M1 M2} is verified for $\L_1$ with
\begin{equation}\label{theta L1}
\theta_{\L_1} = 2 \varepsilon_{\L_1}.
\end{equation}
\end{Prop}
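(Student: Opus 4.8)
The plan is to solve the boundary value problem \eqref{Pb Amine} by an explicit formula and then estimate the resulting solution operator directly. Fix $\lambda\in\Sigma_\nu$ and set $m=\sqrt{\lambda}$ (principal branch, well defined since $\lambda\notin\RR_-$); the definition of $\Sigma_\nu$ says precisely that $\text{Re}(m)>\nu>0$, so the characteristic roots of the equation in \eqref{Pb Amine}, namely $r_1=\nu-m$ and $r_2=\nu+m$, satisfy $\text{Re}(r_1)<0<\text{Re}(r_2)$. Variation of parameters (the Wronskian of $e^{r_1t}$ and $e^{r_2t}$ is $2m\,e^{2\nu t}$), choosing the antiderivative of the $e^{r_2t}$-coefficient based at $+\infty$ to enforce decay at infinity and the antiderivative of the $e^{r_1t}$-coefficient based at $0$, and fixing the remaining free term $c\,e^{r_1t}$ so that the value at $t=0$ vanishes, leads to the candidate
\begin{multline*}
V(t)=\frac{e^{r_1t}}{2m}\int_0^{+\infty}e^{-r_2s}R(s)\,ds-\frac{e^{r_1t}}{2m}\int_0^{t}e^{-r_1s}R(s)\,ds\\
-\frac{e^{r_2t}}{2m}\int_t^{+\infty}e^{-r_2s}R(s)\,ds;
\end{multline*}
a short computation using $r_1+r_2=2\nu$, $r_1r_2=\nu^2-\lambda$ and $r_2-r_1=2m$ confirms that this $V$ solves the differential equation in \eqref{Pb Amine}.

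The crux is the uniform estimate of the three terms of $V$ over $\Sigma_{\L_1}$, which rests on the two constraints in the definition \eqref{lambda estimation resolvante}. The angular one, $|\arg(\lambda)|\leqslant\pi-2\varepsilon_{\L_1}$, gives $|\arg(m)|\leqslant\frac{\pi}{2}-\varepsilon_{\L_1}$, hence $\text{Re}(m)\geqslant|m|\sin\varepsilon_{\L_1}$; the size one, $|\lambda|\geqslant 4\nu^2/\sin^2\varepsilon_{\L_1}$, gives $|m|\geqslant 2\nu/\sin\varepsilon_{\L_1}$, hence $\text{Re}(m)\geqslant 2\nu$. Consequently $\text{Re}(r_2)=\text{Re}(m)+\nu\geqslant|m|\sin\varepsilon_{\L_1}$ and $|\text{Re}(r_1)|=\text{Re}(m)-\nu\geqslant\frac{1}{2}|m|\sin\varepsilon_{\L_1}$, while $|m|=|\lambda|^{1/2}$. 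The first term of $V$ equals $\frac{1}{2m}e^{r_1t}$ times a fixed vector of $X$; estimating that vector by Hölder's inequality and the exponential $t\mapsto e^{r_1t}$ in $L^p(0,+\infty)$ yields a bound $\leqslant C\,|m|^{-1}|\text{Re}(r_1)|^{-1/p}\text{Re}(r_2)^{-1/p'}\|R\|_{\mathcal E}$. The second and third terms of $V$ are, up to sign, the one-sided convolutions $\frac{1}{2m}\int_0^t e^{r_1(t-s)}R(s)\,ds$ and $\frac{1}{2m}\int_t^{+\infty}e^{-r_2(s-t)}R(s)\,ds$; since the kernels $u\mapsto e^{r_1u}$ and $u\mapsto e^{-r_2u}$ on $(0,+\infty)$ have $L^1$-norms $|\text{Re}(r_1)|^{-1}$ and $\text{Re}(r_2)^{-1}$, Young's inequality bounds their $\mathcal E$-norms by $\frac{1}{2|m|\,|\text{Re}(r_1)|}\|R\|_{\mathcal E}$ and $\frac{1}{2|m|\,\text{Re}(r_2)}\|R\|_{\mathcal E}$. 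Inserting the lower bounds above, each of the three contributions is $\leqslant C(p,\varepsilon_{\L_1})\,|m|^{-2}\|R\|_{\mathcal E}=C(p,\varepsilon_{\L_1})\,|\lambda|^{-1}\|R\|_{\mathcal E}$, which is the announced resolvent estimate. I expect this to be the main obstacle: a full factor $|m|^{-2}=|\lambda|^{-1}$ must be extracted, and this succeeds only because the two constraints defining $\Sigma_{\L_1}$ force both exponents $r_1,r_2$ to have real part comparable to $|\lambda|^{1/2}$.

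Next I would check that $V\in D(\L_1)$ and that the solution is unique. By construction $V(0)=0$, and a routine splitting of each convolution into a short-range and a long-range piece, together with the estimates above, shows $V(t)\to 0$ as $t\to+\infty$. Differentiating the formula, the two pointwise terms produced by Leibniz's rule cancel and $V'$ is again a combination of the three types of terms above, now carrying the extra factors $r_1$, $r_1$, $r_2$ respectively; since $|r_1|,|r_2|\leqslant\frac{3}{2}|m|$ (because $\nu\leqslant|m|/2$), this gives $\|V'\|_{\mathcal E}\leqslant C\,|m|\,|\lambda|^{-1}\|R\|_{\mathcal E}=C\,|\lambda|^{-1/2}\|R\|_{\mathcal E}$; then $V''=2\nu V'+(\lambda-\nu^2)V+R\in\mathcal E$, so $V\in W^{2,p}(0,+\infty;X)$ and $V\in D(\L_1)$. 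For uniqueness, any $W\in D(\L_1)$ with $\L_1W=\lambda W$ solves the homogeneous equation, hence $W(t)=a\,e^{r_2t}+b\,e^{r_1t}$ with $a,b\in X$; $L^p$-integrability forces $a=0$ and $W(0)=0$ forces $b=0$. Thus $\L_1-\lambda I$ is a bijection from $D(\L_1)$ onto $\mathcal E$ with $\|(\L_1-\lambda I)^{-1}\|_{\mathcal L(\mathcal E)}\leqslant M_{\L_1}/|\lambda|$.

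Finally, $\rho(\L_1)\supset\Sigma_{\L_1}\neq\emptyset$ makes $\L_1$ closed, and $D(\L_1)$ is dense in $\mathcal E$ since it contains $C^\infty_c\big((0,+\infty);X\big)$. For hypothesis $(H_1)$ it only remains to observe, using the description of $\Sigma_\nu$ obtained just before the statement, that for $R_0\geqslant 4\nu^2/\sin^2\varepsilon_{\L_1}$ taken large enough that moreover $\Sigma_{1,R_0}\subset\Sigma_\nu$, one has $\Sigma_{1,R_0}=\{z\in\CC\setminus\{0\}:|z|\geqslant R_0,\ |\arg(z)|<\pi-2\varepsilon_{\L_1}\}\subset\Sigma_{\L_1}$; combined with the resolvent bound, this is exactly $(H_1)$ for $\L_1$ with $\theta_{\L_1}=2\varepsilon_{\L_1}$, as in \eqref{theta L1}.
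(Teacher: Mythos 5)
Your proof is correct and follows essentially the same route as the paper: the same representation formula \eqref{Rep V} (which the paper imports from \cite{amine} rather than rederiving by variation of parameters), the same H\"older/Young estimates on its three terms, and the same use of the two constraints defining $\Sigma_{\L_1}$ to get $\mathrm{Re}(\sqrt{\lambda})-\nu\geqslant\tfrac12\sqrt{|\lambda|}\sin\varepsilon_{\L_1}$. The only difference is that you verify the boundary conditions, the $W^{2,p}$ regularity and the uniqueness by hand, where the paper cites Theorem~2 and Lemma~8 of \cite{amine}.
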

\begin{proof}
Let $\lambda \in \Sigma_{\nu}$. From \cite{amine}, Theorem 2, p. 712, there exists a unique solution \mbox{$V \in W^{2,p}(0,+\infty;X)$} of problem \eqref{Pb Amine}, given by 
\begin{equation}\label{Rep V}
\begin{array}{rcl}
V(t) & = & \dis \frac{e^{t (\nu-\sqrt{\lambda})}}{2\sqrt{\lambda}} \int_0^{+\infty} e^{-s (\nu+\sqrt{\lambda})} R(s)~ds \\ \ecart
&& \dis - \frac{1}{2\sqrt{\lambda}} \left(\int_0^t e^{(t-s)(\nu-\sqrt{\lambda})} R(s)~ds + \int_t^{+\infty} e^{-(s-t)(\nu+\sqrt{\lambda})} R(s)~ds\right),
\end{array}
\end{equation}
see formula (15) in \cite{amine} where $L_1:= - \nu I - \sqrt{\lambda} I$ and $L_2:= \nu I - \sqrt{\lambda} I$. It follows that $\Sigma_{\nu} \subset \rho(\L_1)$. This proves that $\L_1$ is closed. The boundary conditions are verified by using Lemma~8, p. 718 in \cite{amine}. 

Moreover, for all $\lambda \in \Sigma_{\nu}$, since $\text{Re}(\sqrt{\lambda}) > \nu$, from \eqref{Rep V}, we obtain
$$\begin{array}{rcl}
\|V\|_{\mathcal{E}} &\hspace{-0.025cm} \leqslant &\hspace{-0.025cm} \dis \frac{1}{2\sqrt{|\lambda|}}\left(\int_0^{+\infty} e^{-t p(\text{Re}(\sqrt{\lambda})-\nu)} ~dt\right)^{1/p} \int_0^{+\infty} e^{-s (\nu+\text{Re}(\sqrt{\lambda}))} \|R(s)\|_{X}~ds \\ \ecart
&\hspace{-0.025cm}&\hspace{-0.025cm} \dis +  \sup_{t \in \RR_+} \left(\int_0^t \left|e^{(t-s)(\nu-\sqrt{\lambda})}\right|ds + \int_t^{+\infty} \left|e^{-(s-t)(\nu+\sqrt{\lambda})}\right| ds\right)\frac{\|R\|_{\mathcal{E}}}{2\sqrt{|\lambda|}},
\end{array}$$
hence, noting $q$ the conjugate exponent of $p$, we have
$$\begin{array}{rcl}
\|V\|_{\mathcal{E}} & \leqslant & \dis \left(\frac{1}{p (\text{Re}(\sqrt{\lambda})-\nu)} \right)^{1/p} \left(\int_0^{+\infty} e^{-s q(\nu+\text{Re}(\sqrt{\lambda}))}~ds\right)^{1/q} \frac{\|R\|_{\mathcal{E}}}{2\sqrt{|\lambda|}} \\ \ecart
&& \dis + \sup_{t \in \RR_+} \left(\frac{1 - e^{-t(\text{Re}(\sqrt{\lambda})-\nu)}}{\text{Re}(\sqrt{\lambda})-\nu} + \frac{1}{\text{Re}(\sqrt{\lambda})+\nu}\right)\frac{\|R\|_{\mathcal{E}}}{2\sqrt{|\lambda|}} \\ \\

& \leqslant & \dis \frac{1}{p^{1/p} (\text{Re}(\sqrt{\lambda})-\nu)^{1/p}} \frac{1}{q^{1/q} (\text{Re}(\sqrt{\lambda})+\nu)^{1/q}} \, \frac{\|R\|_{\mathcal{E}}}{2\sqrt{|\lambda|}} \\ \ecart
&& \dis + \frac{2}{\text{Re}(\sqrt{\lambda})-\nu}\,\frac{\|R\|_{\mathcal{E}}}{2\sqrt{|\lambda|}} \\ \\

& \leqslant & \dis \frac{2}{\text{Re}(\sqrt{\lambda})-\nu}\,\frac{\|R\|_{\mathcal{E}}}{\sqrt{|\lambda|}}.
\end{array}$$
Let $\lambda = |\lambda|e^{i\arg(\lambda)} \in \Sigma_{\L_1}$, then $|\arg(\lambda)| \leqslant \pi - 2\varepsilon_{\L_1}$. Thus
$$\begin{array}{rcl}
\dis \text{Re}(\sqrt{\lambda})-\nu & \geqslant & \dis \sqrt{|\lambda|} \cos\left(\frac{\arg(\lambda)}{2}\right) - \nu \geqslant \dis \sqrt{|\lambda|} \cos\left(\frac{\pi}{2} - \varepsilon_{\L_1}\right) - \nu \\ \ecart

& \geqslant & \dis \sqrt{|\lambda|} \sin\left(\varepsilon_{\L_1}\right) - \frac{\sqrt{|\lambda|}}{2} \sin\left(\varepsilon_{\L_1}\right)  \geqslant  \dis \frac{\sqrt{|\lambda|}}{2} \sin\left(\varepsilon_{\L_1}\right).
\end{array}$$
Therefore, setting $\dis M_{\L_1} = \frac{4}{\sin\left(\varepsilon_{\L_1}\right)} > 0$ such that
\begin{equation}\label{estim V}
\|V\|_{\mathcal{E}} \leqslant \frac{M_{\L_1}}{|\lambda|}\, \|R\|_{\mathcal{E}}.
\end{equation}
\end{proof}

\subsection{Study of operator $\L_2$}

In this section, in view to determine the spectral properties of $\L_2$, since $\L_2 = -\A$, we will study the spectral properties of $\A$.

We focus ourselves, for $\lambda \leqslant 0$, on the following spectral equation
\begin{equation}\label{eq spectrale}
\A \Psi - \lambda \Psi = F,
\end{equation}
which writes
$$
\left( 
\begin{array}{cc}
0 & 1 \\ 
-\left( \dfrac{\partial ^{2}}{\partial \theta ^{2}}+1\right) ^{2} & -2\left( 
\dfrac{\partial ^{2}}{\partial \theta ^{2}}-1\right)%
\end{array}\right) \left( 
\begin{array}{c}
\psi _{1} \\ 
\psi _{2}%
\end{array}%
\right) - \lambda \left( 
\begin{array}{c}
\psi _{1} \\ 
\psi _{2}%
\end{array}%
\right) = \dbinom{F_{1}}{F_{2}},
$$
with $F_{1}\in W_{0}^{2,p}(0,\omega)$ and $F_{2} \in L^{p}(0,\omega)$. 

We have to find the unique couple $(\psi_1,\psi_2) \in \left(W^{4,p}(0,\omega)\cap W_{0}^{2,p}(0,\omega)\right) \times W_{0}^{2,p}(0,\omega)$, which satisfies the following system
\begin{equation*}
\left\{ 
\begin{array}{lll}
\psi _{2} - \lambda \psi _{1}&=&F_{1} \\ 
- \dfrac{\partial^{4}}{\partial \theta^{4}} \psi_{1} - 2 \dfrac{\partial^2}{\partial \theta^2} \psi_1 - \psi_1 - 2 \dfrac{\partial^{2}}{\partial \theta^{2}} \psi_{2} + 2 \psi_2 - \lambda \psi_{2} &=& F_{2}.
\end{array}
\right.
\end{equation*}
Thus, we first have to solve
\begin{equation*}
\left\{ 
\begin{array}{l}
- \dfrac{\partial^{4}}{\partial \theta^{4}} \psi_{1} - 2 \dfrac{\partial^2}{\partial \theta^2} \psi_1 - \psi_1 - 2 \dfrac{\partial^{2}}{\partial \theta^{2}} (\lambda\psi_{1}+F_1) + (2 - \lambda)(\lambda \psi_{1} + F_1) = F_{2} \\ \ecart 
\psi _{1}\in W^{4,p}(0,\omega)\cap W_{0}^{2,p}(0,\omega),
\end{array}
\right.
\end{equation*}%
that is 
\begin{equation*}
\left\{ 
\begin{array}{l}
- \psi^{(4)}_{1} - 2 (1+\lambda) \psi''_{1} - (\lambda - 1)^2 \psi _{1} = F_{2} + 2 F''_{1} + (\lambda-2) F_{1} \\ \ecart
\psi_1 (0) = \psi_1 (\omega) = \psi'_1 (0) = \psi'_1 (\omega) = 0.
\end{array}
\right.
\end{equation*}
Set $G_\lambda = - F_{2} - 2 (F''_{1}-F_1) - \lambda F_1$, it follows that the previous system writes
\begin{equation}\label{Pb psi}
\left\{ 
\begin{array}{l}
\psi _{1}^{(4)} + 2(\lambda + 1) \psi''_{1} + (\lambda - 1)^{2}\psi_{1} = G_\lambda \\ \ecart
\psi_{1}(0) = \psi_{1}(\omega) = \psi'_{1}(0) = \psi'_{1}(\omega) = 0.
\end{array}
\right.
\end{equation}
Then, the characteristic equation 
\begin{equation*}
\chi^{4} + 2\left(1+\lambda \right) \chi^{2}+\left( \lambda -1\right)^{2}=0,
\end{equation*}
admits, for $\lambda < 0$, the following four distinct solutions
\begin{equation}\label{alpha i}
\left\{ 
\begin{array}{c}
\dis \alpha _{1} = \sqrt{-\lambda} + i, \quad \alpha_3 = - \alpha_1 \\ \ecart
\dis \alpha _{2} = \sqrt{-\lambda} - i, \quad \alpha_4 = - \alpha_2,
\end{array}\right.
\end{equation}
and for $\lambda = 0$, two double solutions that are $i$ and $-i$.  

We have to distinguish the two cases : $\lambda=0$ and $\lambda<0$.

\subsubsection{Case $\lambda = 0$ : Invertibility of $\A$}
 
\begin{Prop}\label{Prop A inversible} $\A$ is invertible with bounded inverse $i.e.$ $0\in \rho(\A)$ and there exists $\varepsilon_0 > 0$, such that $\overline{B(0,\varepsilon_0)} \subset \rho(\A)$.
\end{Prop}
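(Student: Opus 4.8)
The plan is to prove invertibility of $\A$ by reducing, as the paper already does, to the fourth order boundary value problem \eqref{Pb psi} with $\lambda = 0$, namely
\begin{equation*}
\left\{\begin{array}{l}
\psi_1^{(4)} + 2\psi_1'' + \psi_1 = G_0 \\ \ecart
\psi_1(0) = \psi_1(\omega) = \psi_1'(0) = \psi_1'(\omega) = 0,
\end{array}\right.
\end{equation*}
where $G_0 = -F_2 - 2(F_1'' - F_1)$ and the characteristic equation has the double roots $i$ and $-i$. First I would write the general solution of the ODE by variation of constants (or via the explicit fundamental system $\cos\theta, \sin\theta, \theta\cos\theta, \theta\sin\theta$) and impose the four homogeneous boundary conditions; this yields a $4\times 4$ linear system for the four constants. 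The key point is that this system is uniquely solvable, i.e. the only solution of the homogeneous problem ($G_0 = 0$) is $\psi_1 = 0$. This is exactly where the roots of $(\sinh z + z)(\sinh z - z) = 0$ enter: after the substitution that the paper indicates (essentially $z \leftrightarrow i\omega$-type scaling), the determinant of the boundary system vanishes precisely when $\omega$ (or $\pm i\omega$) is such a Fadle–Papkovich root, and since by \cite{fadle} all these roots lie off the real axis with $|\operatorname{Im}(z_j)| \geqslant \tau > 0$, the value corresponding to our real $\omega>0$ is never a root. Hence the determinant is nonzero, $\psi_1$ is uniquely determined, then $\psi_2 = F_1 + \lambda\psi_1 = F_1$ at $\lambda = 0$ wait — at $\lambda=0$ we get $\psi_2 = F_1$, which lies in $W^{2,p}_0(0,\omega)$ as required, and $\psi_1 \in W^{4,p}\cap W^{2,p}_0$ by elliptic regularity for the fourth order operator with the given data $G_0 \in L^p$. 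This gives $0 \in \rho(\A)$ with a bounded inverse by the closed graph theorem (or by exhibiting the explicit Green kernel, which gives boundedness directly).

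For the second assertion, $\overline{B(0,\varepsilon_0)} \subset \rho(\A)$, the cleanest route is perturbation: $\rho(\A)$ is open and $\A - \lambda I$ is boundedly invertible for $|\lambda| < 1/\|\A^{-1}\|_{\L(X)}$ by the Neumann series, so it suffices to take $\varepsilon_0 = 1/(2\|\A^{-1}\|_{\L(X)})$. Alternatively one can rerun the ODE analysis for small $\lambda \neq 0$ using the four distinct roots $\alpha_i$ from \eqref{alpha i} and note that the boundary determinant depends continuously (indeed holomorphically) on $\lambda$ and is nonzero at $\lambda = 0$, hence nonzero on a neighbourhood; but the Neumann series argument is shorter and self-contained given the first part.

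The main obstacle is the first part: showing the boundary determinant of the homogeneous fourth order problem does not vanish. Because $i$ and $-i$ are double characteristic roots at $\lambda = 0$, the fundamental system involves the resonant terms $\theta\cos\theta$ and $\theta\sin\theta$, so the determinant is a genuine transcendental expression in $\omega$ rather than something elementary; the work is to identify it — up to a nonvanishing factor — with $(\sinh z + z)(\sinh z - z)$ evaluated at the appropriate argument built from $\omega$, and then invoke the Fadle–Papkovich computation of \cite{fadle} together with hypothesis \eqref{hyp inv sum} (or merely the fact that $\omega$ is real and positive, since $\tau \simeq 4.21239 > 0$ forces all roots off $\RR$) to conclude nonvanishing. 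Once that algebraic identification is done, everything else — regularity, boundedness of the inverse, the small ball — is routine.
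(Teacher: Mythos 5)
Your proposal is correct, and it follows the paper's overall architecture (reduce to the clamped fourth-order problem \eqref{Pb psi1 A inversible}, get boundedness of the inverse by the closed graph theorem plus Poincar\'e, then obtain the closed ball $\overline{B(0,\varepsilon_0)}\subset\rho(\A)$ by the Neumann-series/openness argument). The one place you diverge is the crucial solvability step: the paper simply cites \cite{thorel}, Theorem 2.8, statement 2, for unique classical solvability of \eqref{Pb psi1 A inversible}, whereas you propose to prove it by hand via the resonant fundamental system $\cos\theta,\sin\theta,\theta\cos\theta,\theta\sin\theta$ and a boundary determinant. Your plan does go through, and in fact more cleanly than you suggest: after eliminating two constants with the conditions at $\theta=0$, the remaining $2\times2$ determinant is exactly $\sin^{2}\omega-\omega^{2}=(\sin\omega-\omega)(\sin\omega+\omega)$, which is nonzero for every real $\omega>0$ by the elementary inequality $|\sin\omega|<\omega$; so you need neither the Fadle--Papkovich root computation of \cite{fadle} nor hypothesis \eqref{hyp inv sum} here (and indeed the paper does not assume \eqref{hyp inv sum} for this proposition — your parenthetical ``or merely the fact that $\omega$ is real and positive'' is the right justification, and the appeal to \eqref{hyp inv sum} should be dropped). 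The trade-off is the usual one: the paper's citation is shorter but imports an external theorem; your computation is self-contained, makes the nonvanishing of the clamped determinant explicit, and shows transparently why $\lambda=0$ is never a spurious eigenvalue, at the cost of the (routine but real) algebra identifying the determinant, which you flag as the main obstacle but do not carry out in the proposal. The remaining points — $\psi_2=F_1\in W^{2,p}_0(0,\omega)$ at $\lambda=0$, $W^{4,p}$ regularity from the explicit representation, and $\varepsilon_0=1/(2\|\A^{-1}\|_{\L(X)})$ — match the paper's proof in substance.
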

\begin{proof}
Here $\lambda = 0$. We have to solve \eqref{eq spectrale}. This is equivalent to solve \eqref{Pb psi} which writes as
\begin{equation}\label{Pb psi1 A inversible}
\left\{ \begin{array}{l}
\psi _{1}^{(4)}+2\psi''_{1}+\psi _{1}=G_{0} \\ \ecart
\psi _{1}(0)=\psi _{1}(\omega )=\psi'_{1}(0)=\psi'_{1}(\omega )=0.
\end{array}
\right.
\end{equation}
From \cite{thorel}, Theorem 2.8, statement 2., there exists a unique classical solution of problem \eqref{Pb psi1 A inversible}. Thus, using the closed graph theorem, there exists $C_1>0$ such that
$$\|\psi_1''\|_{L^p(0,\omega)} \leqslant C_1 \|G_1\|_{L^p(0,\omega)} \leqslant C_1 \left(\|F_2\|_{L^p(0,\omega)} + 2 \|F_1\|_{W^{2,p}(0,\omega)}\right) \leqslant 2C_1 \|F\|_X,$$
and from the Poincar\'e inequality, there exists $C_\omega > 0$ such that
$$\|\psi_1\|_{W_0^{2,p}(0,\omega)} \leqslant C_\omega \|\psi_1''\|_{L^p(0,\omega)} \leqslant 2 C_1 C_\omega  \|F\|_X.$$
Finally, since $\psi_2 = F_1$, then we have
$$\|\psi_2\|_{L^p(0,\omega)} = \|F_1\|_{L^p(0,\omega)} \leqslant \|F_2\|_{L^p(0,\omega)} + \|F_1\|_{W^{2,p}(0,\omega)} = \|F\|_X.$$
Therefore, there exists a unique solution $\Psi$ of $\A \Psi = F$ with $\|\Psi\|_X \leqslant (1+2C_1C_\omega)\|F\|_X$, from which we deduce that there exists $\varepsilon_0 > 0$, such that $\overline{B(0,\varepsilon_0)} \subset \rho(\A)$. 
\end{proof}

\subsubsection{Case $\lambda < 0$ : Spectral study of $\A$}

In order to prove \refP{Prop estim resolvante A}, we first have to state the following technical results. 
\begin{Lem}\label{Lem I J}
Let $\alpha \in \CC\setminus\{0\}$, $a, b \in \RR$ with $a<b$ and $f \in W^{2,p}_0(a,b)$. For all $x \in [a,b]$, we set 
$$K(x) = \int_a^x e^{-(x-s)\alpha} f(s)~ds + \int_x^b e^{-(s-x)\alpha} f(s)~ds.$$
Then, we have
$$K(x) = \frac{2}{\alpha}\,f(x) + \frac{1}{\alpha^2} \int_a^x e^{-(x-s)\alpha} f''(s)~ds + \frac{1}{\alpha^2}\int_x^b e^{-(s-x)\alpha} f''(s)~ds.$$
\end{Lem}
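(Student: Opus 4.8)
The plan is to prove the identity by integrating by parts twice in each of the two integrals defining $K(x)$, turning each derivative that falls on the exponential kernel into a factor of $-\alpha$ (with a sign depending on the orientation), while collecting the boundary terms. First I would write $K(x) = K_1(x) + K_2(x)$ with $K_1(x) = \int_a^x e^{-(x-s)\alpha} f(s)\,ds$ and $K_2(x) = \int_x^b e^{-(s-x)\alpha} f(s)\,ds$, and treat them separately.

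For $K_1$, I note that $\frac{d}{ds}\,e^{-(x-s)\alpha} = \alpha\, e^{-(x-s)\alpha}$, so integrating by parts gives $K_1(x) = \frac{1}{\alpha}\big[e^{-(x-s)\alpha} f(s)\big]_{s=a}^{s=x} - \frac{1}{\alpha}\int_a^x e^{-(x-s)\alpha} f'(s)\,ds$. Since $f \in W^{2,p}_0(a,b)$ we have $f(a) = 0$, so the boundary term is just $\frac{1}{\alpha} f(x)$. A second integration by parts on $\int_a^x e^{-(x-s)\alpha} f'(s)\,ds$, using $f'(a) = 0$ (again from the $W^{2,p}_0$ condition), yields $\frac{1}{\alpha} f'(x) - \frac{1}{\alpha}\int_a^x e^{-(x-s)\alpha} f''(s)\,ds$. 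Assembling, $K_1(x) = \frac{1}{\alpha} f(x) - \frac{1}{\alpha^2} f'(x) + \frac{1}{\alpha^2}\int_a^x e^{-(x-s)\alpha} f''(s)\,ds$. The computation for $K_2$ is entirely parallel: here $\frac{d}{ds}\,e^{-(s-x)\alpha} = -\alpha\, e^{-(s-x)\alpha}$, the boundary contributions come from $f(b) = 0$ and $f'(b) = 0$, and one finds $K_2(x) = \frac{1}{\alpha} f(x) + \frac{1}{\alpha^2} f'(x) + \frac{1}{\alpha^2}\int_x^b e^{-(s-x)\alpha} f''(s)\,ds$. Adding $K_1$ and $K_2$, the two $\pm\frac{1}{\alpha^2} f'(x)$ terms cancel and the two $\frac{1}{\alpha} f(x)$ terms combine to $\frac{2}{\alpha} f(x)$, giving exactly the claimed formula.

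There is essentially no obstacle here: the only points requiring the slightest care are getting the signs right in the two kernels (the kernel in $K_2$ decays in the opposite direction, so its $s$-derivative carries a $-\alpha$), and invoking at each step the vanishing of $f$ and $f'$ at the endpoints $a$ and $b$, which is exactly what membership in $W^{2,p}_0(a,b)$ supplies. Integration by parts is justified in the Sobolev setting since $f \in W^{2,p}(a,b) \hookrightarrow C^1([a,b])$ on a bounded interval, so $f$ and $f'$ are genuine continuous functions and $f''\in L^p \subset L^1(a,b)$.
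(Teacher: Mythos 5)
Your proof is correct and is exactly the paper's argument: the paper's own proof simply states that the result follows from two integrations by parts, and you have carried these out in detail, with the signs, the cancellation of the $\pm\frac{1}{\alpha^2}f'(x)$ terms, and the use of $f(a)=f(b)=f'(a)=f'(b)=0$ from $f\in W^{2,p}_0(a,b)$ all handled correctly.
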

\begin{proof}
The result is easily obtained by two integrations by parts. 
\end{proof}

\begin{Prop}\label{Prop sol psi 1}
For any $\lambda < 0$ problem \eqref{Pb psi} has a unique solution given by
\begin{equation}\label{psi 1}
\begin{array}{lll}
\psi_1 (\theta) & := & \dis e^{-\theta \alpha_2} (\beta_1 + \beta_2 + \beta_3 + \beta_4) + e^{-(\omega-\theta)\alpha_2} (\beta_3 + \beta_4 - \beta_1 - \beta_2) + S(\theta) \\ \ecart
&& \dis + \left(e^{-\theta \alpha_1} - e^{-\theta \alpha_2}\right) (\beta_2 + \beta_4) + \left(e^{-(\omega-\theta) \alpha_1} - e^{-(\omega-\theta) \alpha_2}\right) (\beta_4 - \beta_2),
\end{array}
\end{equation}
where 
\begin{equation}\label{beta i}
\left\{\begin{array}{rcl}
\beta_1 & = & \dis \frac{1}{4i} U_-^{-1} \, \frac{1-e^{-\omega \alpha_1}}{1 - e^{-\omega \alpha_2}} \left(J(0) - J(\omega)\right) \\ \ecart

\beta_2 & = & \dis - \frac{1}{4i} U_-^{-1} \, \left(J(0) - J(\omega)\right) \\ \ecart

\beta_3 & = & \dis -\frac{1}{4i} U_+^{-1} \, \frac{1+e^{-\omega \alpha_1}}{1 + e^{-\omega \alpha_2}} \left(J(0) + J(\omega)\right) \\ \ecart

\beta_4 & = & \dis \frac{1}{4i} U_+^{-1} \, \left(J(0) + J(\omega)\right),
\end{array}\right.
\end{equation}
with 
\begin{equation}\label{U-V M}
\left \{\begin{array}{rcl}
U_- & := & \dis 1 - e^{-2\omega \sqrt{-\lambda}} - 2\omega \sqrt{-\lambda}\, e^{-\omega \sqrt{-\lambda}} \\ \ecart
U_+ & := & \dis 1 - e^{-2\omega \sqrt{-\lambda}} + 2\omega \sqrt{-\lambda}\, e^{-\omega \sqrt{-\lambda}},
\end{array}\right.
\end{equation}
and $S$ is a particular solution of the equation of problem \eqref{Pb psi} which is given, for all $\theta \in [0,\omega]$, by 
\begin{equation}\label{Sp}
\begin{array}{rcl}
S(\theta) &: = & \dis \frac{e^{-\theta \alpha_2}}{2\alpha_2 \,(1 - e^{-2\omega\alpha_2})}  \left(J(0) - e^{- \omega \alpha_2} J(\omega)\right) - \frac{\lambda}{\alpha_1^2\alpha_2^2}\, F_1(\theta) \\ \ecart 
&& \dis + \frac{e^{-(\omega-\theta) \alpha_2}}{2\alpha_2 \,(1 - e^{-2\omega\alpha_2})} \left( J(\omega) - e^{- \omega \alpha_2} J(0)\right) - \frac{1}{2\alpha_2} \,J(\theta),
\end{array}
\end{equation}
with
\begin{equation}\label{J}
J(\theta) := \int_0^\theta e^{-(\theta-s) \alpha_2} v(s) ~ds + \int_\theta^\omega e^{-(s-\theta)\alpha_2} v(s) ~ds,
\end{equation}
where
\begin{equation}\label{v}
\begin{array}{rll}
v(\theta) & := & \dis \frac{e^{-\theta \alpha_1}}{2\alpha_1\left(1 - e^{- 2\omega \alpha_1} \right)} \left(I(0) - e^{-\omega\alpha_1}I(\omega)\right) + \frac{\lambda}{\alpha_1^2\alpha_2^2}\, F_1''(\theta) \\ \ecart
&& \dis + \frac{e^{-(\omega-\theta) \alpha_1}}{2\alpha_1\left(1 - e^{- 2\omega \alpha_1} \right)} \left(I(\omega) - e^{-\omega\alpha_1}I(0)\right)  - \frac{1}{2\alpha_1}\, I(\theta),
\end{array}
\end{equation}
and
\begin{equation}\label{I}
\begin{array}{lll}
I(\theta) & = & \dis \int_0^\theta e^{-(\theta-s)\alpha_1} \left(-F_2-2(F_1''-F_1)+ \frac{\lambda}{\alpha_1^2} \,F_1''\right)(s)~ ds \\ \ecart
&& \dis + \int_\theta^\omega e^{-(s-\theta)\alpha_1} \left(-F_2 - 2(F_1''-F_1) +\frac{\lambda}{\alpha_1^2} \,F_1''\right)(s)~ ds.
\end{array}
\end{equation}
\end{Prop}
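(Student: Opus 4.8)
The plan is to solve problem \eqref{Pb psi} directly as a fourth-order ODE with constant coefficients, following the standard reduction-of-order philosophy that is already visible in the statement: the characteristic roots $\pm\alpha_1,\pm\alpha_2$ come in two pairs differing only by the sign of the imaginary part, so the operator $\psi_1^{(4)}+2(\lambda+1)\psi_1''+(\lambda-1)^2\psi_1$ factors as $\bigl(\tfrac{d^2}{d\theta^2}-\alpha_1^2\bigr)\bigl(\tfrac{d^2}{d\theta^2}-\alpha_2^2\bigr)\psi_1$. The idea is therefore to peel off the factorization in two layers: first set $w := \bigl(\tfrac{d^2}{d\theta^2}-\alpha_2^2\bigr)\psi_1$ and solve the second-order problem $w'' - \alpha_1^2 w = G_\lambda$ for $w$, then solve $\psi_1'' - \alpha_2^2 \psi_1 = w$ for $\psi_1$. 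Each second-order problem with the exponential kernel $e^{-\alpha|x-s|}$ produces, after imposing two of the four boundary conditions at each stage, an explicit representation — this is exactly where the auxiliary functions $I$, $v$, $J$ and $S$ in \eqref{I}--\eqref{Sp} come from, $I$ and $v$ being the intermediate object $w$ written out and $J$, $S$ the first steps of reconstructing $\psi_1$.

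The key steps, in order, would be: (1) verify the factorization of the characteristic polynomial and the formulas \eqref{alpha i} for the roots when $\lambda<0$, noting in particular $\alpha_1^2 = -\lambda + 2i\sqrt{-\lambda}$, $\alpha_2^2 = -\lambda - 2i\sqrt{-\lambda}$, and $\alpha_1^2 - \alpha_2^2 = 4i\sqrt{-\lambda}$, which explains the ubiquitous $1/(4i)$ and $\lambda/(\alpha_1^2\alpha_2^2)$ factors; (2) write the general solution of $w''-\alpha_1^2 w = G_\lambda$ on $(0,\omega)$ using the Green kernel, giving a two-parameter family, and likewise for $\psi_1''-\alpha_2^2\psi_1 = w$, giving another two parameters — four parameters in total, matching the four boundary conditions $\psi_1(0)=\psi_1(\omega)=\psi_1'(0)=\psi_1'(\omega)=0$; (3) impose the boundary conditions and solve the resulting $4\times 4$ linear system for $\beta_1,\dots,\beta_4$; this is where the quantities $U_\pm$ in \eqref{U-V M} appear as the relevant $2\times 2$ sub-determinants, and one must check $U_-\neq 0$ and $U_+\neq 0$ for $\lambda<0$ (for $U_-$ this reduces to $\sinh(x)\neq x$ for $x = \omega\sqrt{-\lambda}>0$, which is clear; for $U_+$ to $\sinh(x)+x\neq 0$, also clear on $x>0$ — note the connection to the Fadle–Papkovich-type equation $(\sinh z+z)(\sinh z-z)=0$ mentioned in the introduction, here restricted to the real axis); (4) substitute the $\beta_i$ back and reorganize into the form \eqref{psi 1}, then verify by direct differentiation that the claimed $\psi_1$ indeed satisfies both the ODE and all four boundary conditions, and that $\psi_1 \in W^{4,p}(0,\omega)$ — the latter following because $G_\lambda \in L^p(0,\omega)$ (as $F_1\in W_0^{2,p}$, $F_2\in L^p$) and the Green kernels are smooth, so elliptic regularity / direct inspection gives $\psi_1^{(4)} \in L^p$; (5) uniqueness follows because the homogeneous problem forces all four parameters to vanish, the coefficient matrix being invertible by step (3).

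I would use Lemma \ref{Lem I J} precisely at the points where a kernel convolution of the type $\int_a^x e^{-(x-s)\alpha}f(s)\,ds + \int_x^b e^{-(s-x)\alpha}f(s)\,ds$ against an $f\in W^{2,p}_0$ needs to be rewritten with two derivatives pulled out — this is the device that lets one trade the $\lambda/\alpha_1^2$ weight on $F_1''$ in \eqref{I} for a $\lambda/(\alpha_1^2\alpha_2^2)$ weight on $F_1$ (and $F_1''$) in \eqref{v} and \eqref{Sp}, keeping track of boundary terms that vanish thanks to $F_1(0)=F_1(\omega)=F_1'(0)=F_1'(\omega)=0$. The whole computation is a bookkeeping exercise once the two-layer structure is fixed.

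The main obstacle is the algebra of step (3)–(4): honestly solving the $4\times 4$ system and then recognizing that the answer collapses to the compact form \eqref{psi 1} with the specific groupings $(\beta_1+\beta_2+\beta_3+\beta_4)$, $(\beta_3+\beta_4-\beta_1-\beta_2)$, $(\beta_2+\beta_4)$, $(\beta_4-\beta_2)$ is delicate — the natural basis $\{e^{-\theta\alpha_1},e^{-(\omega-\theta)\alpha_1},e^{-\theta\alpha_2},e^{-(\omega-\theta)\alpha_2}\}$ is nearly degenerate as $\lambda\to 0^-$ (the two pairs of roots coalesce to $\pm i$), so one must be careful that the apparent singularities in $\beta_i$ cancel. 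Keeping the decomposition into an "even/odd in the reflection $\theta\mapsto\omega-\theta$" part (governed by $U_+$, the $\cosh$-type data $J(0)+J(\omega)$) and the complementary part (governed by $U_-$, the data $J(0)-J(\omega)$) is what makes the cancellation transparent, and I expect the cleanest route is to verify the closed form \eqref{psi 1} a posteriori rather than to derive it from scratch.
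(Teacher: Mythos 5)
Your outline is correct and rests on the same mathematical skeleton as the paper's argument --- the factorization $\psi_1^{(4)}+2(\lambda+1)\psi_1''+(\lambda-1)^2\psi_1=\bigl(\tfrac{d^2}{d\theta^2}-\alpha_1^2\bigr)\bigl(\tfrac{d^2}{d\theta^2}-\alpha_2^2\bigr)\psi_1$, the two-layer Green-kernel representation, and Lemma~\ref{Lem I J} applied exactly where you place it, to trade the convolution of $F_1$ against $e^{-\alpha|\cdot|}$ for the boundary-term-free convolution of $F_1''$. The genuine difference is one of execution: the paper does \emph{not} rederive the representation formula or the solvability of the $4\times 4$ system; it casts \eqref{Pb psi} in the form $\psi_1^{(4)}-(L_-^2+M^2)\psi_1''+L_-^2M^2\psi_1=G_\lambda$ with $L_-=-\alpha_1 I$, $M=-\alpha_2 I$, imports existence, uniqueness and the explicit formulas (14)--(16) from \cite{LMMT} and \cite{LLMT}, and spends its entire proof on the bookkeeping that converts those formulas into \eqref{psi 1}--\eqref{I} (the passage $I_1\mapsto I$, $v_0\mapsto v$, $J_1\mapsto J$, and the computation of $U_\pm$ and $S'(0)\pm S'(\omega)$). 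Your route is self-contained but obliges you to actually solve the linear system that the references solve; the paper's route is shorter on the page but opaque without \cite{LLMT} in hand. Both are legitimate, and your even/odd decomposition under $\theta\mapsto\omega-\theta$ is indeed how the system decouples into the two $2\times2$ blocks with determinants proportional to $U_+$ and $U_-$.

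Two small cautions. First, $\alpha_1^2=-\lambda-1+2i\sqrt{-\lambda}$ and $\alpha_2^2=-\lambda-1-2i\sqrt{-\lambda}$ (you dropped the $-1$); only the difference $\alpha_1^2-\alpha_2^2=4i\sqrt{-\lambda}$ and the product $\alpha_1^2\alpha_2^2=(\lambda-1)^2$ enter the formulas, and those you have right. Second, if you compute the determinants honestly you will find $U_\mp = 1-e^{-2\omega\sqrt{-\lambda}}\mp 2\sqrt{-\lambda}\,e^{-\omega\sqrt{-\lambda}}\sin(\omega)$, which is what the paper's own proof obtains, not the expression \eqref{U-V M} of the statement (the two agree only up to replacing $\sin\omega$ by $\omega$); the non-vanishing check then reads $\sinh(\omega\sqrt{-\lambda})\neq\mp\sqrt{-\lambda}\sin(\omega)$, which still follows from $\sinh(x)>x\geqslant\sqrt{-\lambda}\,|\sin\omega|\cdot(x/(\omega\sqrt{-\lambda}))$ at $x=\omega\sqrt{-\lambda}$, i.e.\ from $\sinh(x)>x$ together with $|\sin\omega|\leqslant\omega$. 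Neither point invalidates your plan.
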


\begin{proof}
In order to apply results obtained in \cite{LLMT} and \cite{LMMT}, we set $L_- = -\alpha_1 I$, $M= -\alpha_2 I$, $r_- = \alpha_1^2 - \alpha_2^2$, $a=0$ and $b=\omega$. Then, problem \eqref{Pb psi} reads as
\begin{equation*}
\left\{ 
\begin{array}{l}
\psi _{1}^{(4)} - (L_-^2 + M^2) \psi''_{1} + L_-^2 M^2 \psi_{1} = G_{\lambda} \\ \ecart
\psi _{1}(0)=\psi _{1}(\omega )=\psi'_{1}(0)=\psi'_{1}(\omega )=0.
\end{array}
\right.
\end{equation*}
From \cite{LMMT}, there exists a unique solution whose representation formula is explicitly given in \cite{LLMT} by (14)-(15)-(16). This representation formula shows that $\psi_1$ writes as in \eqref{psi 1}, with $\beta_1,\beta_2,\beta_3,\beta_4$ given by \eqref{beta i} and
\begin{equation}\label{Sp M}
\begin{array}{rll}
S(\theta) &= &\dis \frac{1}{2\alpha_2} e^{-\theta \alpha_2} Z \int_0^\omega e^{-s \alpha_2} v_0(s)~ ds + \frac{1}{2\alpha_2} e^{-(\omega-\theta)\alpha_2} Z  \int_0^\omega e^{-(\omega-s)\alpha_2} v_0(s)~ ds \\ \ecart
&& - \dis \frac{1}{2\alpha_2} \int_0^\theta e^{-(\theta-s)\alpha_2} v_0(s)~ ds - \frac{1}{2\alpha_2} \int_\theta^\omega e^{-(s-\theta)\alpha_2} v_0(s)~ ds \\ \ecart
&& - \dis\frac{1}{2\alpha_2} e^{-\theta\alpha_2} Z e^{-\omega\alpha_2} \int_0^\omega e^{-(\omega-s)\alpha_2} v_0(s)~ ds \\ \ecart
&&- \dis\frac{1}{2\alpha_2} e^{-(\omega-\theta)\alpha_2} Z e^{-\omega \alpha_2} \int_0^\omega e^{-s\alpha_2} v_0(s)~ ds, \quad \theta \in [0,\omega], 
\end{array}
\end{equation}
where
\begin{equation}\label{v0 M}
\begin{array}{rll}
v_0(\theta) &:= &  \dis\frac{1}{2\alpha_1} e^{-\theta\alpha_1} W \int_0^\omega e^{-s\alpha_1} G_\lambda(s)~ ds + \frac{1}{2\alpha_1} e^{-(\omega-\theta)\alpha_1} W  \int_0^\omega e^{-(\omega-s)\alpha_1} G_\lambda(s)~ ds\\ \ecart
&& - \dis \frac{1}{2\alpha_1} e^{-\theta\alpha_1} W e^{-\omega \alpha_1} \int_0^\omega e^{-(\omega-s)\alpha_1} G_\lambda(s)~ ds \\ \ecart
&& - \dis\frac{1}{2\alpha_1} e^{-(\omega-\theta)\alpha_1} W e^{-\omega\alpha_1} \int_0^\omega e^{-s \alpha_1} G_\lambda(s)~ ds \\ \ecart 
&& - \dis \frac{1}{2\alpha_1} I_1(\theta), \quad \theta \in [0,\omega],
\end{array}
\end{equation}
with $Z := \left(1-e^{-2\omega\alpha_2} \right)^{-1}$, $W := \left(1-e^{-2\omega\alpha_1}\right)^{-1}$ and
\begin{equation}\label{I1}
I_1(\theta) = \int_0^\theta e^{-(\theta-s)\alpha_1} G_\lambda(s)~ ds + \int_\theta^\omega e^{-(s-\theta)\alpha_1} G_\lambda(s)~ ds.
\end{equation}
Then, since $G_\lambda = - F_{2} - 2 (F''_{1}-F_1) - \lambda F_1$, we have 
\begin{equation*}
\begin{array}{rcl}
I_1(\theta) & = & \dis \int_0^\theta e^{-(\theta-s)\alpha_1} \left(-F_2 - 2 (F''_1-F_1)\right)(s) ~ds \\ \ecart
&& \dis + \int_\theta^\omega e^{-(s-\theta)\alpha_1} \left(-F_2 - 2 (F''_1-F_1)\right)(s) ~ ds \\ \ecart
&& \dis - \lambda \int_0^\theta e^{-(\theta-s)\alpha_1} F_1(s)~ ds - \lambda
\int_\theta^\omega e^{-(s-\theta)\alpha_1} F_1(s)~ ds. 
\end{array}
\end{equation*}
Then, from \refL{Lem I J}, since $F_1 \in W^{2,p}_0(0,\omega)$, it follows
\begin{equation*}
\begin{array}{rcl}
I_1(\theta) & = & \dis \int_0^\theta e^{-(\theta-s)\alpha_1} \left(-F_2 - 2 (F''_1 - F_1)\right)(s)~ ds \\ \ecart
&& \dis + \int_\theta^\omega e^{-(s-\theta)\alpha_1} \left(-F_2 - 2 (F''_1 - F_1)\right)(s)~ ds \\ \ecart
&&\dis - \frac{2\lambda}{\alpha_1} F_1(\theta) + \frac{\lambda}{\alpha_1^2} \left( \int_0^\theta e^{-(\theta-s)\alpha_1} F_1''(s)~ ds + \int_\theta^\omega e^{-(s-\theta)\alpha_1} F_1''(s)~ ds \right) \\ \\

& = & \dis - \frac{2\lambda}{\alpha_1} F_1(\theta) + \int_0^\theta e^{-(\theta-s)\alpha_1} \left(-F_2-2(F_1''-F_1)+ \frac{\lambda}{\alpha_1^2} \,F_1''\right)(s)~ ds \\ \ecart
&& \dis + \int_\theta^\omega e^{-(s-\theta)\alpha_1} \left(-F_2 - 2(F_1''-F_1) +\frac{\lambda}{\alpha_1^2} \,F_1''\right)(s)~ ds.
\end{array}
\end{equation*}
Hence $I$, given by \eqref{I}, satisfies
\begin{equation}\label{I1 I}
I(\theta) =  I_1(\theta) + \frac{2\lambda}{\alpha_1}\, F_1(\theta).
\end{equation}
Note that, from \eqref{I1} and \eqref{I1 I}, we have
$$\int_0^\omega e^{-s \alpha_1} G_\lambda(s)~ ds = I_1(0) = I(0) \quad \text{and} \quad \int_0^\omega e^{-(\omega-s) \alpha_1} G_\lambda(s)~ ds = I_1(\omega) = I(\omega).$$
Therefore, from \eqref{v0 M}, for all $\theta \in [0,\omega]$, we deduce that
\begin{equation*}
\begin{array}{rll}
v_0(\theta) & = & \dis \frac{1}{2\alpha_1} \, e^{-\theta \alpha_1} W \left(I(0) - e^{-\omega\alpha_1}I(\omega)\right)  \\ \ecart
&& \dis + \frac{1}{2\alpha_1} \, e^{-(\omega-\theta) \alpha_1} W \left(I(\omega) - e^{-\omega\alpha_1}I(0)\right) - \frac{1}{2\alpha_1}\, I_1(\theta) \\ \\

& = & \dis \frac{1}{2\alpha_1} \, e^{-\theta \alpha_1} W \left(I(0) - e^{-\omega\alpha_1}I(\omega)\right) + \frac{\lambda}{\alpha_1^2} \,F_1(\theta) \\ \ecart
&& \dis + \frac{1}{2\alpha_1} \, e^{-(\omega-\theta) \alpha_1} W \left(I(\omega) - e^{-\omega\alpha_1}I(0)\right)  - \frac{1}{2\alpha_1}\, I(\theta).
\end{array}
\end{equation*}
Moreover, setting 
\begin{equation}\label{v1}
v_1 = v_0 - \frac{\lambda}{\alpha_1^2} \,F_1.
\end{equation}
Thus, for all $\theta \in [0,\omega]$, noting
\begin{equation}\label{J1}
J_1(\theta) = \int_0^\theta e^{-(\theta-s) \alpha_2} v_0(s)~ ds + \int_\theta^\omega e^{-(s-\theta)\alpha_2 } v_0(s)~ ds,
\end{equation}
from \eqref{v1} and \refL{Lem I J}, since $F_1 \in W^{2,p}_0(0,\omega)$, we obtain
\begin{equation*}
\begin{array}{rll}
J_1(\theta) & = & \dis \int_0^\theta e^{-(\theta-s) \alpha_2} v_1(s)~ ds + \int_\theta^\omega e^{-(s-\theta)\alpha_2 } v_1(s)~ ds \\ \ecart
&& \dis + \frac{\lambda}{\alpha_1^2} \left(\int_0^\theta e^{-(\theta-s) \alpha_2} F_1(s)~ ds + \int_\theta^\omega e^{-(s-\theta)\alpha_2 } F_1(s)~ ds \right),
\end{array}
\end{equation*}
hence
\begin{equation*}
\begin{array}{rll}
J_1(\theta) & = & \dis \int_0^\theta e^{-(\theta-s) \alpha_2} v_1(s)~ ds + \int_\theta^\omega e^{-(s-\theta)\alpha_2 } v_1(s)~ ds + \frac{2\lambda}{\alpha_1^2\alpha_2} F_1(\theta) \\ \ecart
&& \dis + \frac{\lambda }{\alpha_1^2\alpha_2^2}\left( \int_0^\theta e^{-(\theta-s) \alpha_2} F_1''(s)~ ds + \int_\theta^\omega e^{-(s-\theta)\alpha_2 } F_1''(s)~ ds \right) \\ \\

& = & \dis \int_0^\theta e^{-(\theta-s) \alpha_2} \left(v_1(s) + \frac{\lambda}{\alpha_1^2\alpha_2^2} \,F_1''(s)\right) ds \\ \ecart
&& \dis + \int_\theta^\omega e^{-(s-\theta)\alpha_2 } \left(v_1(s) + \frac{\lambda}{\alpha_1^2\alpha_2^2}\,F_1''(s)\right) ds + \frac{2\lambda}{\alpha_1^2\alpha_2} F_1(\theta).
\end{array}
\end{equation*}
From \eqref{v1}, for all $\theta \in [0,\omega]$, we deduce that $v$ given by \eqref{v} and $J$ given by \eqref{J}, satisfy 
$$v(\theta) = v_1(\theta) + \frac{\lambda}{\alpha_1^2\alpha_2^2} \, F_1''(\theta),$$
and 
\begin{equation}\label{J1 J}
J(\theta) =  J_1(\theta) - \frac{2\lambda}{\alpha_1^2\alpha_2} F_1(\theta).
\end{equation}
Note that, from \eqref{J1} and \eqref{J1 J}, one has
$$\int_0^\omega e^{-s\alpha_2 } v_0(s)~ ds = J_1(0) = J(0) \quad \text{and} \quad \int_0^\omega e^{-(\omega-s)\alpha_2 } v_0(s)~ ds = J_1(\omega) = J(\omega).$$
Finally, from \eqref{J}, \eqref{Sp M} and \eqref{J1 J}, for all $\theta \in [0,\omega]$, we deduce that
\begin{equation*}
\begin{array}{rll}
S(\theta) & = & \dis \frac{1}{2\alpha_2}\, e^{-\theta \alpha_2} Z \left(J(0) - e^{- \omega \alpha_2} J(\omega)\right) \\ \ecart 
&& \dis + \frac{1}{2\alpha_2} \,e^{-(\omega-\theta) \alpha_2} Z \left( J(\omega) - e^{- \omega \alpha_2} J(0)\right) - \frac{1}{2\alpha_2} \, J_1(\theta),
\end{array}
\end{equation*}
which leads to \eqref{Sp}. 

The constants $\beta_i$, $i=1,2,3,4$, are given by (15) and (16) in \cite{LLMT}. $U_-$, $V_-$ and $F'_-(a) \pm F'_-(\gamma)$ in \cite{LLMT} are replaced here by $U_-$, $U_+$ and $S'(0) \pm S'(\omega)$. So, in order to compute constants $\beta_i$, we now make explicit $U_-$, $U_+$ and $S'(0) \pm S'(\omega)$.
\begin{equation*}
\begin{array}{rll}
U_- & = & \dis 1 - e^{-\omega (\alpha_1 + \alpha_2)} - (\alpha_1^2 - \alpha_2^2)^{-1} (\alpha_1 + \alpha_2)^2 \left(e^{-\omega \alpha_2} - e^{-\omega \alpha_1}\right) \\ \ecart
& = & \dis 1 - e^{-2\omega \sqrt{-\lambda}} + i\sqrt{-\lambda} \left(e^{-\omega (\sqrt{-\lambda} -i)} - e^{-\omega (\sqrt{-\lambda} + i)}\right) \\ \ecart

& = & \dis 1 - e^{-2\omega \sqrt{-\lambda}}  + 2i \sqrt{-\lambda}\, e^{-\omega \sqrt{-\lambda}} \left( \frac{e^{\omega i } - e^{-\omega i}}{2} \right) \\ \ecart

& = & \dis 1 - e^{-2\omega \sqrt{-\lambda}} - 2 \sqrt{-\lambda} \,e^{-\omega \sqrt{-\lambda}} \sin(\omega),
\end{array}
\end{equation*}
and
\begin{equation*}
\begin{array}{rll}
U_+ & = & \dis 1 - e^{-\omega (\alpha_1 + \alpha_2)} + (\alpha_1^2 - \alpha_2^2)^{-1} (\alpha_1 + \alpha_2)^2 \left(e^{-\omega \alpha_2} - e^{-\omega \alpha_1}\right) \\ \ecart
& = & \dis 1 - e^{-2\omega \sqrt{-\lambda}} - i \sqrt{-\lambda} \left(e^{-\omega (\sqrt{-\lambda} -i)} - e^{-\omega (\sqrt{-\lambda} + i)}\right) \\ \ecart

& = & \dis 1 - e^{-2\omega \sqrt{-\lambda}} + 2 \sqrt{-\lambda}\, e^{-\omega \sqrt{-\lambda}} \sin(\omega).
\end{array}
\end{equation*}
From \eqref{Sp}, it follows that
\begin{equation*}
\begin{array}{rll}
S'(\theta) & = &  \dis -\frac{1}{2} \,e^{-\theta \alpha_2} Z \left(J(0) - e^{- \omega \alpha_2} J(\omega)\right) - \frac{\lambda}{\alpha_1^2\alpha_2^2}\, F_1'(\theta) \\ \ecart 
&& \dis + \frac{1}{2}\, e^{-(\omega-\theta) \alpha_2} Z \left( J(\omega) - e^{- \omega \alpha_2} J(0)\right) - \frac{1}{2\alpha_2} \, J'(\theta) \\ \\

& = & \dis -\frac{1}{2} \, e^{-\theta \alpha_2} Z \left(J(0) - e^{- \omega \alpha_2} J(\omega)\right) + \frac{1}{2} \, e^{-(\omega-\theta) \alpha_2} Z \left( J(\omega) - e^{- \omega \alpha_2} J(0)\right) \\ \ecart 
&& \dis + \frac{1}{2} \left(\int_0^\theta e^{-(\theta-s) \alpha_2} v(s) ~ds - \int_\theta^\omega e^{-(s-\theta)\alpha_2 } v(s) ~ds\right) - \frac{\lambda}{\alpha_1^2\alpha_2^2}\, F_1'(\theta),
\end{array}
\end{equation*}
from which we deduce that
$$S'(0) + S'(\omega) = - \frac{J(0) - J(\omega)}{\left(1 - e^{-\omega \alpha_2}\right)} \quad \text{and} \quad S'(0) - S'(\omega) = - \frac{J(0) + J(\omega)}{\left(1 + e^{-\omega \alpha_2}\right)}.$$
This prove that constants $\beta_i$, $i=1,2,3,4$, are given by \eqref{beta i}.
\end{proof}

\begin{Rem}
Since $0 < \sin(\omega) < \omega$, for all $\omega > 0$, then we have
\begin{equation*}
U_- = 1 - e^{-2\omega \sqrt{-\lambda}} - 2 \sqrt{-\lambda} \,e^{-\omega \sqrt{-\lambda}} \sin(\omega) \geqslant 1 - e^{-2\omega \sqrt{-\lambda}} - 2 \omega\sqrt{-\lambda} \, e^{-\omega \sqrt{-\lambda}},
\end{equation*}
and
\begin{equation*}
U_+ = 1 - e^{-2\omega \sqrt{-\lambda}} + 2 \sqrt{-\lambda}\, e^{-\omega \sqrt{-\lambda}} \sin(\omega) \geqslant 1 - e^{-2\omega \sqrt{-\lambda}} - 2 \omega\sqrt{-\lambda}\, e^{-\omega \sqrt{-\lambda}}.
\end{equation*}
Let $x>0$. Setting
$$f(x) = 1 - e^{-2x} - 2 x e^{-x},$$
we have
$$f'(x) = 2 e^{-2x} - 2 e^{-x} + 2 x e^{-x} = 2 e^{-x} \left(e^{-x} + x - 1\right) > 0.$$
It follows that $f(x) > f(0) = 0$. Finally, for all $\omega,\sqrt{-\lambda} > 0$, we deduce that 
\begin{equation}\label{U > 0}
U_- \geqslant 1 - e^{-2\omega \sqrt{-\lambda}} - 2 \omega\sqrt{-\lambda} e^{-\omega \sqrt{-\lambda}} = f(\omega\sqrt{-\lambda}) > 0,
\end{equation}
and 
\begin{equation}\label{V > 0} 
U_+ \geqslant 1 - e^{-2\omega \sqrt{-\lambda}} - 2 \omega\sqrt{-\lambda} e^{-\omega \sqrt{-\lambda}} = f(\omega\sqrt{-\lambda}) > 0.
\end{equation}
\end{Rem}

\begin{Lem}\label{Lem estim J, I, v}
Let $F_1 \in W^{2,p}_0(0,\omega)$ and $F_2 \in L^p(0,\omega)$. Consider $\varepsilon_0$ defined in \refP{Prop A inversible}, then for all $\lambda \leqslant -\varepsilon_0$, $J$, $v$ and $I$, given by \eqref{J}, \eqref{v} and \eqref{I}, satisfy the following estimates
\begin{enumerate}

\item $\dis\|I\|_{L^p(0,\omega)} \leqslant \frac{2}{\sqrt{-\lambda}} \left(\|F_2\|_{L^p(0,\omega)} + 2\|F_1\|_{L^p(0,\omega)} + 3 \|F_1''\|_{L^p(0,\omega)}\right)$.

\item $\dis |I(0)| + |I(\omega)| \leqslant \frac{2}{\sqrt{-\lambda}^{1-1/p}} \, \left(\|F_2\|_{L^p(0,\omega)} + 2\|F_1\|_{L^p(0,\omega)} + 3 \|F_1''\|_{L^p(0,\omega)}\right).$

\item $\dis \|v\|_{L^p(0,\omega)} \leqslant \frac{M_1}{-\lambda} \left(\|F_2\|_{L^p(0,\omega)} + 2 \|F_1\|_{L^p(0,\omega)} + 3 \|F_1''\|_{L^p(0,\omega)}\right)$,

where $M_1 = 2 + \frac{2}{1 - e^{- 2\omega \sqrt{\varepsilon_0}}}$.

\item $\dis \|J \|_{L^p(0,\omega)} \leqslant \frac{2}{\sqrt{-\lambda}}\, \|v\|_{L^p(0,\omega)}$.

\item $\dis |J(0)| + |J(\omega)| \leqslant \frac{2}{\sqrt{-\lambda}^{1-1/p}}\, \|v\|_{L^p(0,\omega)}$.
\end{enumerate}
\end{Lem}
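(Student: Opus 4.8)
The plan is to reduce all five estimates to a single elementary convolution bound and then keep careful track of the powers of $\sqrt{-\lambda}$. The starting point is that, for $\lambda \leqslant -\varepsilon_0 < 0$, the roots in \eqref{alpha i} satisfy $\text{Re}(\alpha_1) = \text{Re}(\alpha_2) = \sqrt{-\lambda}$ and $|\alpha_1|^2 = |\alpha_2|^2 = 1 - \lambda$; consequently $\frac{1}{|\alpha_j|} \leqslant \frac{1}{\sqrt{-\lambda}}$, $\frac{|\lambda|}{|\alpha_1|^2} = \frac{-\lambda}{1-\lambda} \leqslant 1$, and $\frac{|\lambda|}{|\alpha_1|^2 |\alpha_2|^2} = \frac{-\lambda}{(1-\lambda)^2} \leqslant \frac{1}{-\lambda}$ (because $(1-\lambda)^2 \geqslant \lambda^2$ for $\lambda \leqslant 0$). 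Moreover $|1 - e^{-2\omega \alpha_j}| \geqslant 1 - e^{-2\omega\, \text{Re}(\alpha_j)} = 1 - e^{-2\omega\sqrt{-\lambda}} \geqslant 1 - e^{-2\omega\sqrt{\varepsilon_0}} > 0$; this is exactly where the hypothesis $\lambda \leqslant -\varepsilon_0$ (rather than just $\lambda < 0$) enters, since it keeps the denominators appearing in \eqref{v} bounded away from $0$ uniformly in $\lambda$.

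Next I would record the following elementary fact, a direct consequence of Young's convolution inequality. Let $q$ be the conjugate exponent of $p$, let $\alpha \in \CC$ with $\beta := \text{Re}(\alpha) > 0$, and let $g \in L^p(0,\omega)$. Then the function
$$\theta \longmapsto \int_0^\theta e^{-(\theta-s)\alpha} g(s)\,ds + \int_\theta^\omega e^{-(s-\theta)\alpha} g(s)\,ds$$
has $L^p(0,\omega)$-norm at most $\frac{2}{\beta}\|g\|_{L^p(0,\omega)}$ (bound $|e^{-(\theta-s)\alpha}|$ by $e^{-(\theta-s)\beta}$ for $s \leqslant \theta$ and symmetrically, so that each integral is dominated by the convolution of $|g|$ with $r \mapsto e^{-\beta r}$ on $(0,+\infty)$, whose $L^1$-norm is $1/\beta$), while its values at $\theta = 0$ and at $\theta = \omega$ are each, by Hölder's inequality, at most $(q\beta)^{-1/q}\|g\|_{L^p(0,\omega)} \leqslant \beta^{-1/q}\|g\|_{L^p(0,\omega)} = \beta^{-(1-1/p)}\|g\|_{L^p(0,\omega)}$.

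With $\beta = \sqrt{-\lambda}$, applying this fact to \eqref{I} — with $\alpha = \alpha_1$ and $g = -F_2 - 2(F_1'' - F_1) + \frac{\lambda}{\alpha_1^2}F_1''$, whose $L^p$-norm is at most $\|F_2\|_{L^p(0,\omega)} + 2\|F_1\|_{L^p(0,\omega)} + 3\|F_1''\|_{L^p(0,\omega)}$ since $|\lambda/\alpha_1^2| \leqslant 1$ — yields estimates (1) and (2) at once; applying it to \eqref{J} — with $\alpha = \alpha_2$ and $g = v$ — yields estimates (4) and (5) at once, with the announced factors $\frac{2}{\sqrt{-\lambda}}$ and $\frac{2}{\sqrt{-\lambda}^{1-1/p}}$. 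The only one requiring a little assembly is (3): I would split \eqref{v} into its two boundary-layer terms, the term $\frac{\lambda}{\alpha_1^2\alpha_2^2}F_1''$, and the term $-\frac{1}{2\alpha_1}I$. Using that $\theta \mapsto e^{-\theta\sqrt{-\lambda}}$ has $L^p(0,\omega)$-norm $\leqslant \sqrt{-\lambda}^{-1/p}$, that $|e^{-\omega\alpha_1}| \leqslant 1$, estimate (2), and $\frac1p + \frac1q = 1$, each boundary-layer term has $L^p(0,\omega)$-norm bounded by
$$\frac{1}{2\sqrt{-\lambda}\,\bigl(1 - e^{-2\omega\sqrt{\varepsilon_0}}\bigr)} \cdot \frac{1}{\sqrt{-\lambda}^{1/p}} \cdot \bigl(|I(0)| + |I(\omega)|\bigr) \;\leqslant\; \frac{\|F_2\|_{L^p(0,\omega)} + 2\|F_1\|_{L^p(0,\omega)} + 3\|F_1''\|_{L^p(0,\omega)}}{(-\lambda)\bigl(1 - e^{-2\omega\sqrt{\varepsilon_0}}\bigr)};$$
the term $\frac{\lambda}{\alpha_1^2\alpha_2^2}F_1''$ contributes at most $\frac{1}{-\lambda}\|F_1''\|_{L^p(0,\omega)}$ by the coefficient bound above; and $-\frac{1}{2\alpha_1}I$ contributes at most $\frac{1}{2\sqrt{-\lambda}}\|I\|_{L^p(0,\omega)} \leqslant \frac{1}{-\lambda}\bigl(\|F_2\|_{L^p(0,\omega)} + 2\|F_1\|_{L^p(0,\omega)} + 3\|F_1''\|_{L^p(0,\omega)}\bigr)$ by estimate (1). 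Summing these four contributions gives precisely the constant $M_1 = 2 + \frac{2}{1 - e^{-2\omega\sqrt{\varepsilon_0}}}$.

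The main obstacle here is not analytic but organizational: one must check that each of the five bounds is homogeneous of the correct degree in $\sqrt{-\lambda}$ — which works out because $\frac1p + \frac1q = 1$ — and that the exponential denominators $1 - e^{-2\omega\alpha_j}$ are controlled uniformly on $\{\lambda \leqslant -\varepsilon_0\}$; the strict positivity $\varepsilon_0 > 0$ furnished by \refP{Prop A inversible} is precisely what makes this possible.
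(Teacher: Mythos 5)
Your proof is correct and follows essentially the same route as the paper: an $L^1$-kernel (Young/Schur) bound for the $L^p$ estimates (1) and (4), Hölder at the endpoints for (2) and (5), and the same term-by-term decomposition of $v$ for (3), arriving at the identical constant $M_1$. The only cosmetic difference is that you discard the factor $\sqrt{1-\lambda}\geqslant\sqrt{-\lambda}$ immediately, whereas the paper keeps it and obtains the slightly sharper intermediate bound $\frac{M_1}{\sqrt{1-\lambda}\sqrt{-\lambda}}$ before weakening it to $\frac{M_1}{-\lambda}$.
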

\begin{proof}\hfill
\begin{enumerate}
\item From \eqref{alpha i} and \eqref{I}, we obtain
\begin{equation*}
\begin{array}{rcl}
\|I\|_{L^p(0,\omega)} & \leqslant & \dis \sup_{\theta \in [0,\omega]} \left(\int_0^\theta e^{-(\theta-s)\sqrt{-\lambda}}~ ds + \int_\theta^\omega e^{-(s-\theta)\sqrt{-\lambda}} ~ds \right) \|2F_1 - F_2\|_{L^p(0,\omega)}   \\ \ecart
&&\dis + \sup_{\theta \in [0,\omega]} \left(\int_0^\theta e^{-(\theta-s)\sqrt{-\lambda}}~ ds + \int_\theta^\omega e^{-(s-\theta)\sqrt{-\lambda}} ~ds \right) \left\|\left(\frac{\lambda}{\alpha_1^2}-2\right) F_1''\right\|_{L^p(0,\omega)} \\ \\

& \leqslant & \dis \sup_{\theta \in [0,\omega]}\left( \frac{1 - e^{-\theta \sqrt{-\lambda}}}{\sqrt{-\lambda}} + \frac{1 - e^{-(\omega-\theta)\sqrt{-\lambda} }}{\sqrt{-\lambda}}\right) \left(\|F_2\|_{L^p(0,\omega)} + 2\|F_1\|_{L^p(0,\omega)}\right) \\ \ecart
&& \dis +\, 3 \sup_{\theta \in [0,\omega]}\left( \frac{1 - e^{-\theta \sqrt{-\lambda}}}{\sqrt{-\lambda}} + \frac{1 - e^{-(\omega-\theta)\sqrt{-\lambda} }}{\sqrt{-\lambda}}\right) \|F_1''\|_{L^p(0,\omega)} \\ \\

& \leqslant & \dis \frac{2}{\sqrt{-\lambda}} \left(\|F_2\|_{L^p(0,\omega)} + 2\|F_1\|_{L^p(0,\omega)} + 3 \|F_1''\|_{L^p(0,\omega)}\right).
\end{array}
\end{equation*}

\item Due to \eqref{alpha i} and \eqref{I}, from the Hölder inequality, it follows
\begin{equation*}
\begin{array}{rcl}
|I(0)| + |I(\omega)| & \leqslant & \dis \int_0^\omega e^{-s\sqrt{-\lambda}} \left|-F_2(s) - 2(F_1''(s)-F_1(s))+ \frac{\lambda}{\alpha_1^2} \,F_1''(s)\right| ds \\ \ecart
&& \dis + \int_0^\omega e^{-(\omega-s)\sqrt{-\lambda}} \left|-F_2(s) - 2(F_1''(s)-F_1(s))+ \frac{\lambda}{\alpha_1^2} \,F_1''(s)\right| ds \\ \\

& \leqslant & \dis \left(\int_0^\omega e^{-q(\omega-s)\sqrt{-\lambda}} ~ds \right)^{1/q}  \left(\|F_2\|_{L^p(0,\omega)} + 2\|F_1\|_{L^p(0,\omega)} + 3 \|F_1''\|_{L^p(0,\omega)}\right) \\ \ecart
&& \dis + \left(\int_0^\omega e^{-sq\sqrt{-\lambda}}~ds\right)^{1/q}  \left(\|F_2\|_{L^p(0,\omega)} + 2\|F_1\|_{L^p(0,\omega)} + 3 \|F_1''\|_{L^p(0,\omega)}\right)  \\ \\

& \leqslant & \dis \frac{2\left(1 - e^{-\omega q \sqrt{-\lambda}}\right)^{1/q}}{q^{1/q} \sqrt{-\lambda}^{1/q}} \, \left(\|F_2\|_{L^p(0,\omega)} + 2\|F_1\|_{L^p(0,\omega)} + 3 \|F_1''\|_{L^p(0,\omega)}\right) \\ \\

& \leqslant & \dis \frac{2}{\sqrt{-\lambda}^{1-1/p}} \, \left(\|F_2\|_{L^p(0,\omega)} + 2\|F_1\|_{L^p(0,\omega)} + 3 \|F_1''\|_{L^p(0,\omega)}\right).
\end{array}
\end{equation*}

\item From \eqref{alpha i} and \eqref{v}, since $|\alpha_1| = |\alpha_2| = \sqrt{1-\lambda} > \sqrt{-\lambda}$, we have
$$\begin{array}{rcl}
\|v\|_{L^p(0,\omega)} & \leqslant & \dis  \frac{|I(0)| + |I(\omega)|}{2\sqrt{1-\lambda}\left(1 - e^{- 2\omega \sqrt{\varepsilon_0}} \right)} \left( \int_0^\omega e^{-p\theta \sqrt{-\lambda}}~d\theta\right)^{1/p} + \frac{-\lambda}{(1-\lambda)^2} \, \|F_1''\|_{L^p(0,\omega)} \\ \ecart
&& \dis + \frac{|I(0)| + |I(\omega)|}{2\sqrt{1-\lambda}\left(1 - e^{- 2\omega \sqrt{\varepsilon_0}} \right)} \left( \int_0^\omega e^{-p(\omega-\theta) \sqrt{-\lambda}}~d\theta\right)^{1/p} + \frac{1}{2\sqrt{1-\lambda}}\, \|I\|_{L^p(0,\omega)} \\ \\

& \leqslant & \dis \frac{\left(|I(0)| + |I(\omega)|\right)}{\sqrt{1-\lambda}\,\sqrt{-\lambda}^{1/p} p^{1/p} \left(1 - e^{- 2\omega \sqrt{\varepsilon_0}} \right)} + \frac{1}{2\sqrt{1-\lambda}}\, \|I\|_{L^p(0,\omega)} \\ \ecart
&& \dis + \frac{1}{1-\lambda}\|F_1''\|_{L^p(0,\omega)}, 
\end{array}$$
hence
$$\begin{array}{rcl}
\|v\|_{L^p(0,\omega)} & \leqslant & \dis \frac{2 \left(\|F_2\|_{L^p(0,\omega)} + 2\|F_1\|_{L^p(0,\omega)} + 3 \|F_1''\|_{L^p(0,\omega)}\right)}{\sqrt{1-\lambda}\,\sqrt{-\lambda}^{1/p+1/q}\, q^{1/q}\, p^{1/p} \left(1 - e^{- 2\omega \sqrt{\varepsilon_0}} \right)}  \\ \ecart
&& \dis + \frac{\|F_2\|_{L^p(0,\omega)} + 2\|F_1\|_{L^p(0,\omega)} + 3 \|F_1''\|_{L^p(0,\omega)}}{\sqrt{1-\lambda}\,\sqrt{-\lambda}} + \frac{1}{1-\lambda} \, \|F_1''\|_{L^p(0,\omega)} \\ \\

& \leqslant & \dis\frac{M_1}{\sqrt{1-\lambda}\,\sqrt{-\lambda}} \left(\|F_2\|_{L^p(0,\omega)} + 2 \|F_1\|_{L^p(0,\omega)} + 3 \|F_1''\|_{L^p(0,\omega)}\right).
\end{array}$$

\item From \eqref{alpha i} and \eqref{J}, we have
$$\begin{array}{rcl}
\|J \|_{L^p(0,\omega)} & \leqslant & \dis \sup_{\theta \in [0,\omega]}\left( \int_0^\theta e^{-(\theta-s) \sqrt{-\lambda}}~ds + \int_\theta^\omega e^{-(s-\theta)\sqrt{-\lambda} }~ds\right) \|v\|_{L^p(0,\omega)} \\ \ecart

& \leqslant & \dis \sup_{\theta \in [0,\omega]}\left( \frac{1 - e^{-\theta \sqrt{-\lambda}}}{\sqrt{-\lambda}} + \frac{1 - e^{-(\omega-\theta)\sqrt{-\lambda} }}{\sqrt{-\lambda}}\right) \|v\|_{L^p(0,\omega)} \\ \ecart

& \leqslant & \dis\frac{2}{\sqrt{-\lambda}}\, \|v\|_{L^p(0,\omega)}.
\end{array}$$

\item Due to \eqref{alpha i} and \eqref{J}, from the Hölder inequality, we deduce that
$$\begin{array}{rcl}
|J(0)| + |J(\omega)| & \leqslant & \dis \int_0^\omega e^{-s\sqrt{-\lambda} } |v(s)| ~ds + \int_0^\omega e^{-(\omega-s) \sqrt{-\lambda}} |v(s)| ~ds \\ \ecart

& \leqslant & \dis \left(\left(\int_0^\omega e^{-sq\sqrt{-\lambda} } ~ds\right)^{1/q} + \left(\int_0^\omega e^{-(\omega-s)q\sqrt{-\lambda}} ~ds\right)^{1/q}\right) \|v\|_{L^p(0,\omega)} \\ \ecart

& \leqslant & \dis \frac{2\left(1 - e^{-\omega q \sqrt{-\lambda}}\right)^{1/q}}{q^{1/q} \sqrt{-\lambda}^{1/q}}\, \|v\|_{L^p(0,\omega)} \\ \ecart

& \leqslant & \dis\frac{2}{\sqrt{-\lambda}^{1-1/p}}\, \|v\|_{L^p(0,\omega)}.
\end{array}$$
\end{enumerate}
\end{proof}

\begin{Lem}\label{Lem estim norm Lp difference exp}
Let $\lambda < 0$. Then, we have
$$\left(\int_0^\omega \left|e^{-\theta\alpha_1} - e^{-\theta\alpha_2}\right|^p~d\theta\right)^{1/p} \leqslant \frac{4}{\sqrt{-\lambda}^{1+1/p}},$$
and
$$\left(\int_0^\omega \left|e^{-(\omega-\theta)\alpha_1} - e^{-(\omega-\theta)\alpha_2}\right|^p~d\theta\right)^{1/p} \leqslant \frac{4}{\sqrt{-\lambda}^{1+1/p}}.$$
\end{Lem}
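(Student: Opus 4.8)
The plan is to push everything down to the explicit values of $\alpha_1,\alpha_2$ recorded in \eqref{alpha i}. Since $\alpha_1=\sqrt{-\lambda}+i$ and $\alpha_2=\sqrt{-\lambda}-i$, I would first factor out the common real exponent and write $e^{-\theta\alpha_1}=e^{-\theta\sqrt{-\lambda}}e^{-i\theta}$, $e^{-\theta\alpha_2}=e^{-\theta\sqrt{-\lambda}}e^{i\theta}$, which gives the clean identity $e^{-\theta\alpha_1}-e^{-\theta\alpha_2}=-2i\,e^{-\theta\sqrt{-\lambda}}\sin\theta$. Hence the pointwise bound
$$\left|e^{-\theta\alpha_1}-e^{-\theta\alpha_2}\right|=2\,e^{-\theta\sqrt{-\lambda}}\,|\sin\theta|\leqslant 2\,\theta\,e^{-\theta\sqrt{-\lambda}},\qquad \theta\in[0,\omega],$$
using only $|\sin\theta|\leqslant\theta$ for $\theta\geqslant 0$. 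The very same computation with $\theta$ replaced by $\omega-\theta$ (and $|\sin(\omega-\theta)|\leqslant\omega-\theta$) will take care of the second expression.

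Next I would bound the $L^p(0,\omega)$-norm by enlarging the interval to $(0,+\infty)$, where the resulting integral is elementary:
$$\int_0^\omega\left|e^{-\theta\alpha_1}-e^{-\theta\alpha_2}\right|^p d\theta\leqslant 2^p\int_0^{+\infty}\theta^p\,e^{-p\theta\sqrt{-\lambda}}\,d\theta.$$
There are two equivalent ways to close the estimate. One is to recognise the Gamma integral, $\int_0^{+\infty}\theta^p e^{-p\theta\sqrt{-\lambda}}\,d\theta=\Gamma(p+1)\big/(p\sqrt{-\lambda})^{p+1}$, take $p$-th roots, and control the constant via the elementary bound $\Gamma(p+1)\leqslant p^p$ valid for $p\geqslant 1$, so that the prefactor is $\leqslant 2/p^{1/p}\leqslant 2$. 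The other, avoiding special functions, is to interpolate between the sup bound $2\theta e^{-\theta\sqrt{-\lambda}}=\tfrac{2}{\sqrt{-\lambda}}\big(\theta\sqrt{-\lambda}\,e^{-\theta\sqrt{-\lambda}}\big)\leqslant\tfrac{2}{e\sqrt{-\lambda}}$ (from $xe^{-x}\leqslant e^{-1}$) and the $L^1$ bound $\int_0^{+\infty}2\theta e^{-\theta\sqrt{-\lambda}}\,d\theta=\tfrac{2}{-\lambda}$, through $\|g\|_{L^p}^p\leqslant\|g\|_{L^\infty}^{p-1}\|g\|_{L^1}$; this again yields a prefactor $2e^{1/p-1}\leqslant 2$. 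Either way the powers of $\sqrt{-\lambda}$ combine to $\sqrt{-\lambda}^{\,-(1+1/p)}$ and the multiplicative constant is $\leqslant 2$, comfortably below the claimed $4$.

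For the $(\omega-\theta)$-version, the change of variable $\eta=\omega-\theta$ maps $\int_0^\omega\left|e^{-(\omega-\theta)\alpha_1}-e^{-(\omega-\theta)\alpha_2}\right|^p d\theta$ onto exactly the integral already estimated, so nothing further is needed. There is no genuine obstacle in this lemma; the only mild point of care is that the constant in front of $\sqrt{-\lambda}^{\,-(1+1/p)}$ be uniform in $p\in(1,+\infty)$, which is precisely why one invokes $\Gamma(p+1)\leqslant p^p$ (a Stirling-type estimate) or the interpolation inequality $\|g\|_{L^p}^p\leqslant\|g\|_{L^\infty}^{p-1}\|g\|_{L^1}$ rather than a $p$-dependent crude bound.
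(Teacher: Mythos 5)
Your proof is correct and follows essentially the same route as the paper: both factor out $e^{-\theta\sqrt{-\lambda}}$ to get $2e^{-\theta\sqrt{-\lambda}}|\sin\theta|$, bound $|\sin\theta|\leqslant\theta$, extend the integral to $(0,+\infty)$, and estimate the resulting Gamma-type integral (the paper via the elementary observation $x^pe^{-px/2}<1$ giving $\int_0^{+\infty}e^{-px}x^p\,dx\leqslant 2/p$, you via $\Gamma(p+1)\leqslant p^p$ or interpolation), with the second estimate reduced to the first by the substitution $\eta=\omega-\theta$ in both cases. Your constant $2$ is even slightly sharper than the paper's $4$.
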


\begin{proof}
For $x\geqslant 0$, we have $e^{-\frac{px}{2}}x^p <1$, so
$$\int_0^{+\infty} e^{-px}x^p~ dx = \int_0^{+\infty} e^{-\frac{px}{2}}e^{-\frac{px}{2}}x^p~dx \leqslant \int_0^{+\infty} e^{-\frac{px}{2}}~dx = \frac{2}{p}.$$
Then, from \eqref{alpha i}, we have
$$\int_0^\omega \left|e^{-\theta\alpha_1} - e^{-\theta\alpha_2}\right|^p~d\theta = \int_0^\omega \left|e^{-\theta\sqrt{-\lambda}} \left(e^{-\theta i} - e^{\theta i}\right)\right|^p~d\theta = 2^p \int_0^\omega e^{-p\theta\sqrt{-\lambda}} |\sin(\theta)|^p~d\theta,$$
hence, setting $x=\theta\sqrt{-\lambda}$, it follows that
$$\begin{array}{rcl}
\dis \int_0^\omega \left|e^{-\theta\alpha_1} - e^{-\theta\alpha_2}\right|^p~d\theta & = & \dis 2^p \int_0^{\omega\sqrt{-\lambda}} e^{-p x} \left|\sin\left(\frac{x}{\sqrt{-\lambda}}\right)\right|^p ~\frac{dx}{\sqrt{-\lambda}} \\ \ecart

& \leqslant & \dis \frac{2^p}{\sqrt{-\lambda}} \int_0^{\omega\sqrt{-\lambda}} e^{-p x} \left(\frac{x}{\sqrt{-\lambda}}\right)^p ~dx \\ \ecart

& \leqslant & \dis \frac{2^p}{\sqrt{-\lambda}^{p+1}} \int_0^{+\infty} e^{-p x} x^p ~dx \\ \ecart

& \leqslant & \dis \frac{2^{p+1}}{p\sqrt{-\lambda}^{p+1}} \leqslant \frac{2^{2p}}{\sqrt{-\lambda}^{p+1}}.
\end{array}$$
The second estimate is obtained by change of variable, taking $\omega-\theta$ instead of $\theta$.  
\end{proof}

\begin{Lem}\label{Lem estimation beta i}
Let $F_1 \in W^{2,p}_0(0,\omega)$ and $F_2 \in L^p(0,\omega)$. Consider $\varepsilon_0$ defined in \refP{Prop A inversible}, then for all $\lambda \leqslant -\varepsilon_0$, $\beta_1$, $\beta_2$, $\beta_3$ and $\beta_4$, defined by \eqref{beta i}, satisfy
$$\max\left(|\beta_1+\beta_2|, |\beta_3+\beta_4| \right) \leqslant \frac{M_1 \left(\|F_2\|_{L^p(0,\omega)} + 2 \|F_1\|_{L^p(0,\omega)} + 3 \|F_1''\|_{L^p(0,\omega)}\right)}{\omega (-\lambda)\sqrt{-\lambda}^{2-1/p}\left(1 - e^{-2\omega \sqrt{\varepsilon_0}} - 2\omega \sqrt{\varepsilon_0}\, e^{-\omega \sqrt{\varepsilon_0}}\right)\left(1-e^{-\omega\sqrt{\varepsilon_0}}\right)},$$
and
$$\max\left(|\beta_2|, |\beta_4| \right) \leqslant \frac{M_1 \left(\|F_2\|_{L^p(0,\omega)} + 2 \|F_1\|_{L^p(0,\omega)} + 3 \|F_1''\|_{L^p(0,\omega)}\right)}{2(-\lambda)\sqrt{-\lambda}^{1-1/p}\left(1 - e^{-2\omega \sqrt{\varepsilon_0}} - 2\omega \sqrt{\varepsilon_0}\, e^{-\omega \sqrt{\varepsilon_0}}\right)},$$
where $M_1 = 2 + \frac{2}{1 - e^{- 2\omega \sqrt{\varepsilon_0}}}$.
\end{Lem}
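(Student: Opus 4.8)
The plan is to reduce everything to the bounds on $J(0)$ and $J(\omega)$ already available in \refL{Lem estim J, I, v}. First I would simplify the two sums appearing in the statement: directly from \eqref{beta i},
$$\beta_1+\beta_2 \;=\; \frac{1}{4i}\,U_-^{-1}\,\frac{e^{-\omega\alpha_2}-e^{-\omega\alpha_1}}{1-e^{-\omega\alpha_2}}\,\bigl(J(0)-J(\omega)\bigr), \qquad \beta_3+\beta_4 \;=\; \frac{1}{4i}\,U_+^{-1}\,\frac{e^{-\omega\alpha_2}-e^{-\omega\alpha_1}}{1+e^{-\omega\alpha_2}}\,\bigl(J(0)+J(\omega)\bigr),$$
obtained by writing $\frac{1-e^{-\omega\alpha_1}}{1-e^{-\omega\alpha_2}}-1=\frac{e^{-\omega\alpha_2}-e^{-\omega\alpha_1}}{1-e^{-\omega\alpha_2}}$, and similarly with a $+$ sign. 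Thus each of the four quantities is a finite product of a factor $U_\mp^{-1}$, possibly a factor $(e^{-\omega\alpha_2}-e^{-\omega\alpha_1})/(1\mp e^{-\omega\alpha_2})$, and a term controlled by $|J(0)|+|J(\omega)|$; it then suffices to bound each factor, for $\lambda\leqslant-\varepsilon_0$.

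Next I would estimate the factors one by one. Since $\alpha_1,\alpha_2$ are given by \eqref{alpha i}, the quantities $U_-,U_+$ in \eqref{U-V M} are real; by the monotonicity of $f(x)=1-e^{-2x}-2xe^{-x}$ established above and the inequalities \eqref{U > 0}--\eqref{V > 0}, together with $\omega\sqrt{-\lambda}\geqslant\omega\sqrt{\varepsilon_0}$, they are positive and satisfy $|U_\mp^{-1}|=1/U_\mp\leqslant\bigl(1-e^{-2\omega\sqrt{\varepsilon_0}}-2\omega\sqrt{\varepsilon_0}\,e^{-\omega\sqrt{\varepsilon_0}}\bigr)^{-1}$. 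For the exponential factor, $|e^{-\omega\alpha_2}-e^{-\omega\alpha_1}|=2e^{-\omega\sqrt{-\lambda}}|\sin\omega|\leqslant 2e^{-\omega\sqrt{-\lambda}}\leqslant\frac{2}{\omega\sqrt{-\lambda}}$, the last step using $e^{-x}\leqslant 1/x$ for $x>0$; this is precisely the origin of the extra $(\omega\sqrt{-\lambda})^{-1}$ decay in the first estimate. Since $|e^{-\omega\alpha_2}|=e^{-\omega\sqrt{-\lambda}}\leqslant e^{-\omega\sqrt{\varepsilon_0}}<1$, the reverse triangle inequality gives $|1\mp e^{-\omega\alpha_2}|\geqslant 1-e^{-\omega\sqrt{\varepsilon_0}}>0$ for both signs. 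Finally, combining statements~5 and~3 of \refL{Lem estim J, I, v},
$$|J(0)|+|J(\omega)| \;\leqslant\; \frac{2}{\sqrt{-\lambda}^{\,1-1/p}}\,\|v\|_{L^p(0,\omega)} \;\leqslant\; \frac{2M_1}{(-\lambda)\,\sqrt{-\lambda}^{\,1-1/p}}\,\Bigl(\|F_2\|_{L^p(0,\omega)}+2\|F_1\|_{L^p(0,\omega)}+3\|F_1''\|_{L^p(0,\omega)}\Bigr),$$
and of course $|J(0)-J(\omega)|\leqslant|J(0)|+|J(\omega)|$.

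It then only remains to multiply these bounds. For $|\beta_2|$ and $|\beta_4|$ one uses $|\beta_2|\leqslant\frac14|U_-^{-1}|\,(|J(0)|+|J(\omega)|)$ and the analogue with $U_+$, which yields at once the second displayed inequality. For $|\beta_1+\beta_2|$ and $|\beta_3+\beta_4|$ one inserts in addition the bounds $\frac{2}{\omega\sqrt{-\lambda}}$ for the exponential factor and $\frac{1}{1-e^{-\omega\sqrt{\varepsilon_0}}}$ for $|1\mp e^{-\omega\alpha_2}|^{-1}$; the numerical constants combine as $\frac14\cdot2\cdot2=1$, giving precisely the first displayed inequality. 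I do not expect a real obstacle here: the argument is essentially careful bookkeeping of the powers of $\sqrt{-\lambda}$ and of the $\lambda$-independent lower bounds afforded by the restriction $\lambda\leqslant-\varepsilon_0$. The only points requiring a little attention are to observe that $U_-,U_+$ are positive reals (so that $|U_\mp^{-1}|=1/U_\mp$) and to use $e^{-x}\leqslant 1/x$ rather than merely $e^{-x}\leqslant1$, in order to capture the sharp decay rate in the estimates for $\beta_1+\beta_2$ and $\beta_3+\beta_4$.
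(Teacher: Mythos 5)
Your proposal is correct and follows essentially the same route as the paper's proof: the same lower bound on $U_\mp$ via $f(\omega\sqrt{-\lambda})\geqslant f(\omega\sqrt{\varepsilon_0})$, the same rewriting $\frac{1-e^{-\omega\alpha_1}}{1-e^{-\omega\alpha_2}}\mp 1$ to isolate the factor $e^{-\omega\alpha_2}-e^{-\omega\alpha_1}$, the same use of statements~3 and~5 of \refL{Lem estim J, I, v}, and the same final step $e^{-\omega\sqrt{-\lambda}}\leqslant (\omega\sqrt{-\lambda})^{-1}$ to produce the extra decay in the first estimate. The constants combine exactly as you indicate.
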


\begin{proof}
Recall that $\beta_i$, $i=1,2,3,4$, depends on $U_-^{-1}$ and $U_+^{-1}$. From \eqref{U > 0} and \eqref{V > 0}, it follows
$$U_- \geqslant f(\omega\sqrt{-\lambda}) \geqslant f(\omega\sqrt{\varepsilon_0}) = 1 - e^{-2\omega \sqrt{\varepsilon_0}} - 2 \omega\sqrt{\varepsilon_0} \, e^{-\omega \sqrt{\varepsilon_0}} > 0,$$
and 
$$U_+ \geqslant f(\omega\sqrt{-\lambda}) \geqslant f(\omega\sqrt{\varepsilon_0}) = 1 - e^{-2\omega \sqrt{\varepsilon_0}} - 2 \omega\sqrt{\varepsilon_0} \,e^{-\omega \sqrt{\varepsilon_0}} > 0.$$
Thus, we deduce
$$U_-^{-1} \leqslant \frac{1}{1 - e^{-2\omega \sqrt{\varepsilon_0}} - 2 \omega\sqrt{\varepsilon_0}\, e^{-\omega \sqrt{\varepsilon_0}}} \quad \text{and} \quad U_+^{-1} \leqslant \frac{1}{1 - e^{-2\omega \sqrt{\varepsilon_0}} - 2 \omega\sqrt{\varepsilon_0} \,e^{-\omega \sqrt{\varepsilon_0}}}.$$
Therefore, from \eqref{beta i} and \refL{Lem estim J, I, v}, we have
\begin{equation*}
\begin{array}{rcl}
|\beta_1+\beta_2| & \leqslant & \dis \frac{|J(0)|+|J(\omega)|}{4\left(1 - e^{-2\omega \sqrt{\varepsilon_0}} - 2\omega \sqrt{\varepsilon_0}\, e^{-\omega \sqrt{\varepsilon_0}}\right)} \, \left| \frac{1-e^{-\omega\alpha_1}}{1-e^{-\omega\alpha_2}} - 1\right| \\ \ecart

& \leqslant & \dis \frac{\|v\|_{L^p(0,\omega)}}{2\sqrt{-\lambda}^{1-1/p}\left(1 - e^{-2\omega \sqrt{\varepsilon_0}} - 2\omega \sqrt{\varepsilon_0}\, e^{-\omega \sqrt{\varepsilon_0}}\right)} \, \left| \frac{e^{-\omega\alpha_2}-e^{-\omega\alpha_1}}{1-e^{-\omega\alpha_2}}\right| \\ \ecart

& \leqslant & \dis \frac{M_1 \left(\|F_2\|_{L^p(0,\omega)} + 2 \|F_1\|_{L^p(0,\omega)} + 3 \|F_1''\|_{L^p(0,\omega)}\right)}{2 (-\lambda)\sqrt{-\lambda}^{1-1/p}\left(1 - e^{-2\omega \sqrt{\varepsilon_0}} - 2\omega \sqrt{\varepsilon_0}\, e^{-\omega \sqrt{\varepsilon_0}}\right)} \, \frac{2e^{-\omega\sqrt{-\lambda}}}{1-e^{-\omega\sqrt{\varepsilon_0}}} \\ \ecart

& \leqslant & \dis \frac{M_1 \left(\|F_2\|_{L^p(0,\omega)} + 2 \|F_1\|_{L^p(0,\omega)} + 3 \|F_1''\|_{L^p(0,\omega)}\right)}{\omega (-\lambda)\sqrt{-\lambda}^{2-1/p}\left(1 - e^{-2\omega \sqrt{\varepsilon_0}} - 2\omega \sqrt{\varepsilon_0}\, e^{-\omega \sqrt{\varepsilon_0}}\right)\left(1-e^{-\omega\sqrt{\varepsilon_0}}\right)} 
\end{array}
\end{equation*}
and similarly
\begin{equation*}
\begin{array}{rcl}
|\beta_3 + \beta_4| & \leqslant & \dis \frac{|J(0)|+|J(\omega)|}{4\left(1 - e^{-2\omega \sqrt{\varepsilon_0}} - 2\omega \sqrt{\varepsilon_0}\, e^{-\omega \sqrt{\varepsilon_0}}\right)} \, \left|1 - \frac{1+e^{-\omega\alpha_1}}{1+e^{-\omega\alpha_2}}\right| \\ \ecart

& \leqslant & \dis \frac{M_1 \left(\|F_2\|_{L^p(0,\omega)} + 2 \|F_1\|_{L^p(0,\omega)} + 3 \|F_1''\|_{L^p(0,\omega)}\right)}{\omega (-\lambda)\sqrt{-\lambda}^{2-1/p}\left(1 - e^{-2\omega \sqrt{\varepsilon_0}} - 2\omega \sqrt{\varepsilon_0}\, e^{-\omega \sqrt{\varepsilon_0}}\right)\left(1-e^{-\omega\sqrt{\varepsilon_0}}\right)}.
\end{array}
\end{equation*}
In the same way, we obtain 
\begin{equation*}
|\beta_2| \leqslant \frac{M_1 \left(\|F_2\|_{L^p(0,\omega)} + 2 \|F_1\|_{L^p(0,\omega)} + 3 \|F_1''\|_{L^p(0,\omega)}\right)}{2(-\lambda)\sqrt{-\lambda}^{1-1/p}\left(1 - e^{-2\omega \sqrt{\varepsilon_0}} - 2\omega \sqrt{\varepsilon_0}\, e^{-\omega \sqrt{\varepsilon_0}}\right)},
\end{equation*}
and
\begin{equation*}
|\beta_4| \leqslant \frac{M_1 \left(\|F_2\|_{L^p(0,\omega)} + 2 \|F_1\|_{L^p(0,\omega)} + 3 \|F_1''\|_{L^p(0,\omega)}\right)}{2(-\lambda)\sqrt{-\lambda}^{1-1/p}\left(1 - e^{-2\omega \sqrt{\varepsilon_0}} - 2\omega \sqrt{\varepsilon_0}\, e^{-\omega \sqrt{\varepsilon_0}}\right)}.
\end{equation*}

\end{proof}

The following proposition will allow us to use the well-defined operator $-\sqrt{\A}$ which, generates a uniformly bounded analytic semigroup $\left(e^{-s\sqrt{\A}}\right)_{s\geqslant 0}$.

\begin{Prop}\label{Prop estim resolvante A}
$\A$ is closed and densely defined in $X$. Moreover, there exists a constant $M > 0$ such that for all $\lambda \leqslant 0$, operator $\A-\lambda I$ is invertible with bounded inverse and 
\begin{equation*}
\left\Vert \left(\A-\lambda I\right)^{-1}\right\Vert_{\L(X)} \leqslant \frac{M}{1+|\lambda|}.
\end{equation*}
\end{Prop}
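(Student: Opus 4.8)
The overall plan is to split the half--line $(-\infty,0]$ into the bounded part $[-\varepsilon_0,0]$, already covered by \refP{Prop A inversible}, and the part $\lambda\leqslant-\varepsilon_0$, on which I use the explicit representation of $\psi_1$ from \refP{Prop sol psi 1} together with Lemmas \ref{Lem estim J, I, v}--\ref{Lem estimation beta i}. Closedness of $\A$ is immediate since $0\in\rho(\A)$ by \refP{Prop A inversible} (any operator with non-empty resolvent set is closed), and $D(\A)=[W^{4,p}(0,\omega)\cap W_0^{2,p}(0,\omega)]\times W_0^{2,p}(0,\omega)$ is dense in $X$ because it contains $C_c^\infty(0,\omega)^2$, which is dense in $W_0^{2,p}(0,\omega)\times L^p(0,\omega)=X$. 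For $\lambda\in[-\varepsilon_0,0]$, \refP{Prop A inversible} gives $\overline{B(0,\varepsilon_0)}\subset\rho(\A)$, so $\lambda\mapsto(\A-\lambda I)^{-1}$ is holomorphic, hence bounded, on the compact segment $[-\varepsilon_0,0]$ by some $C_0>0$, and then $\|(\A-\lambda I)^{-1}\|_{\L(X)}\leqslant C_0\leqslant C_0(1+\varepsilon_0)/(1+|\lambda|)$.

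Now fix $\lambda\leqslant-\varepsilon_0$ and $F=(F_1,F_2)\in X$. By \refP{Prop sol psi 1}, problem \eqref{Pb psi} has a unique solution $\psi_1\in W^{4,p}(0,\omega)\cap W_0^{2,p}(0,\omega)$, given by \eqref{psi 1}; setting $\psi_2:=\lambda\psi_1+F_1\in W_0^{2,p}(0,\omega)$, the pair $\Psi=(\psi_1,\psi_2)$ is the unique element of $D(\A)$ with $(\A-\lambda I)\Psi=F$, so $\lambda\in\rho(\A)$, and it remains to prove $\|\psi_1\|_{W^{2,p}(0,\omega)}+\|\psi_2\|_{L^p(0,\omega)}\leqslant\frac{M}{1+|\lambda|}\|F\|_X$. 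Since $\psi_1\in W_0^{2,p}(0,\omega)$, Poincar\'e's inequality reduces the first term to estimating $\|\psi_1''\|_{L^p(0,\omega)}$. Differentiating \eqref{psi 1} twice produces the blocks $\alpha_j^2e^{-\theta\alpha_j}$ and $\alpha_1^2e^{-\theta\alpha_1}-\alpha_2^2e^{-\theta\alpha_2}=(-\lambda-1)(e^{-\theta\alpha_1}-e^{-\theta\alpha_2})+2i\sqrt{-\lambda}\,(e^{-\theta\alpha_1}+e^{-\theta\alpha_2})$ (and their counterparts in $\omega-\theta$) multiplied by the $\beta_i$, plus $S''$. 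Using $|\alpha_j|=\sqrt{1-\lambda}$, the elementary bound $\|e^{-\theta\alpha_j}\|_{L^p(0,\omega)}\leqslant C(-\lambda)^{-1/(2p)}$, \refL{Lem estim norm Lp difference exp}, the $\beta_i$--bounds of \refL{Lem estimation beta i}, and, for $S''$, the identity $J''=\alpha_2^2J-2\alpha_2 v$ (two integrations by parts, as in \refL{Lem I J}) together with \refL{Lem estim J, I, v} and \eqref{U > 0}--\eqref{V > 0} (which keep $(1-e^{-2\omega\alpha_2})^{-1}$ bounded for $\lambda\leqslant-\varepsilon_0$), each of these contributions is $O((-\lambda)^{-1})\big(\|F_2\|_{L^p}+2\|F_1\|_{L^p}+3\|F_1''\|_{L^p}\big)\leqslant\frac{C}{1+|\lambda|}\|F\|_X$; the only term needing a separate look is $-\frac{\lambda}{\alpha_1^2\alpha_2^2}F_1''=-\frac{\lambda}{(\lambda-1)^2}F_1''$ coming from $S$, whose norm is trivially $\leqslant\frac{C}{1+|\lambda|}\|F\|_X$. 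Hence $\|\psi_1\|_{W^{2,p}(0,\omega)}\leqslant\frac{C}{1+|\lambda|}\|F\|_X$.

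The delicate point is $\psi_2=\lambda\psi_1+F_1$, for which the naive estimate is only $O(1)$: one must cancel the $F_1$--contribution hidden inside $\psi_1$. Inspecting \eqref{psi 1}--\eqref{Sp}, the unique term proportional to $F_1$ itself is $-\frac{\lambda}{\alpha_1^2\alpha_2^2}F_1=-\frac{\lambda}{(\lambda-1)^2}F_1$, so I write $\psi_1=-\frac{\lambda}{(\lambda-1)^2}F_1+R_1$, where $R_1$ gathers the remaining (exponential--, $\beta_i$-- and $J$--) terms; estimating these terms directly as above, but now without the two extra derivatives (hence with an extra factor $(-\lambda)^{-1}$), gives $\|R_1\|_{L^p(0,\omega)}\leqslant\frac{C}{(-\lambda)^2}\|F\|_X$. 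Then
\[
\psi_2=\lambda\psi_1+F_1=\Bigl(1-\frac{\lambda^2}{(\lambda-1)^2}\Bigr)F_1+\lambda R_1=\frac{1-2\lambda}{(\lambda-1)^2}\,F_1+\lambda R_1,
\]
and since $0\leqslant\frac{1-2\lambda}{(\lambda-1)^2}\leqslant\frac{2}{1+|\lambda|}$ for $\lambda\leqslant0$, while $\|\lambda R_1\|_{L^p(0,\omega)}\leqslant\frac{C}{|\lambda|}\|F\|_X\leqslant\frac{C(1+\varepsilon_0^{-1})}{1+|\lambda|}\|F\|_X$ for $\lambda\leqslant-\varepsilon_0$, we obtain $\|\psi_2\|_{L^p(0,\omega)}\leqslant\frac{C}{1+|\lambda|}\|F\|_X$. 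Combining the two regimes yields a constant $M>0$ with $\|(\A-\lambda I)^{-1}\|_{\L(X)}\leqslant\frac{M}{1+|\lambda|}$ for all $\lambda\leqslant0$.

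The main obstacle is precisely this $\psi_2$--estimate: one has to recognise and subtract the exact $F_1$--term in the representation formula and then verify that the remainder $R_1$ decays like $(-\lambda)^{-2}$. This in turn forces a careful bookkeeping of the powers of $-\lambda$ throughout Lemmas \ref{Lem estim J, I, v}--\ref{Lem estimation beta i}, since differentiating the exponentials repeatedly brings down factors $\alpha_j^2\sim-\lambda$ that must be absorbed by the (already sharp) bounds on $I$, $v$, $J$ and the $\beta_i$; everything else is routine, using only Poincar\'e's inequality and the non-degeneracy $(1-e^{-2\omega\alpha_2})^{-1}$, $U_\pm^{-1}$ bounded on $\lambda\leqslant-\varepsilon_0$.
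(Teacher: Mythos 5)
Your proof is correct and follows essentially the same route as the paper: existence and the spectral inclusion via Proposition~\ref{Prop sol psi 1}, the $\|\psi_1''\|_{L^p}$ bound through the split into $S''$ and the homogeneous part using Lemmas~\ref{Lem estim J, I, v}--\ref{Lem estimation beta i}, and the same key cancellation for $\psi_2$ (your $\psi_2=\frac{1-2\lambda}{(\lambda-1)^2}F_1+\lambda R_1$ with $\|R_1\|=O(\lambda^{-2})$ is exactly the paper's estimate of $\lambda S+F_1$ via \eqref{lambda Sp + F1}, where the term $\bigl(1-\frac{\lambda^2}{\alpha_1^2\alpha_2^2}\bigr)F_1$ appears). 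Your handling of the segment $[-\varepsilon_0,0]$ by compactness and your careful split of $\alpha_1^2e^{-\theta\alpha_1}-\alpha_2^2e^{-\theta\alpha_2}$ are only minor presentational refinements of what the paper does.
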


\begin{proof}
It is clear that $\mathcal{D}(0,\omega)\times \mathcal{D}(0,\omega)\subset D(\mathcal{A})\subset X=W_{0}^{2,p}(0,\omega)\times L^{p}(0,\omega)$, where $\mathcal{D}(0,\omega)$ is the set of $C^{\infty}$-functions with compact support in $(0,\omega)$. Since $\mathcal{D}(0,\omega)$ is dense in each spaces $W_{0}^{2,p}(0,\omega)$ and $L^{p}(0,\omega)$ for their respective norms, then $D(\mathcal{A})$ is dense.

Let $F_1 \in W_{0}^{2,p}(0,\omega)$ and $F_2 \in L^p(0,\omega)$. From \refP{Prop A inversible}, $0 \in \rho(\A)$. From \refP{Prop sol psi 1}, for all $\lambda < 0$, there exist a unique couple 
$$(\psi_1, \psi_2) \in \left(W^{4,p}(0,\omega)\cap W_{0}^{2,p}(0,\omega)\right) \times W_{0}^{2,p}(0,\omega)$$
which satisfies 
\begin{equation}\label{syst psi_1 psi_2}
\left\{ 
\begin{array}{lll}
\psi_2 &=& \lambda \psi _{1} + F_{1} \\ 
\psi _{1}^{(4)} + 2\lambda \psi''_{1} + \lambda^{2}\psi_{1} &=& G_\lambda, 
\end{array}
\right.
\end{equation}
where $G_\lambda = -F_2 - 2(F''_1-F_1) - \lambda F_1$. Recall that $\Psi = (\A - \lambda I)^{-1}F$ reads as \eqref{syst psi_1 psi_2}, then $\RR_- \subset \rho(\A)$, thus $\A$ is closed. 

Moreover, $\psi_1$ is given by \eqref{psi 1}-\eqref{beta i}-\eqref{Sp} and $\psi_2$ is given by
\begin{equation}\label{psi2}
\begin{array}{lll}
\psi_2 (\theta) & := & \dis \lambda e^{-\theta \alpha_2} (\beta_1 + \beta_2 + \beta_3 + \beta_4) + \lambda e^{-(\omega-\theta)\alpha_2} (\beta_3 + \beta_4 - \beta_1 - \beta_2) \\ \ecart
&& \dis + \lambda \left(e^{-\theta \alpha_1} - e^{-\theta \alpha_2}\right) (\beta_2 + \beta_4) + \lambda \left(e^{-(\omega-\theta) \alpha_1} - e^{-(\omega-\theta) \alpha_2}\right) (\beta_4 - \beta_2) \\ \ecart
&& \dis + \lambda S(\theta) + F_1(\theta),
\end{array}
\end{equation}
where $\beta_i$, $i=1,2,3,4$ are given by \eqref{beta i}-\eqref{U-V M}. From \eqref{Sp}, $\lambda S(\theta) + F_1(\theta)$ is given by
\begin{equation}\label{lambda Sp + F1}
\begin{array}{rcl}
\lambda S(\theta) + F_1(\theta) & = & \dis \frac{\lambda }{2\alpha_2\left(1 - e^{- 2\omega \alpha_2} \right)} \,e^{-\theta \alpha_2} \left(J(0) - e^{- \omega \alpha_2} J(\omega)\right) \\ \ecart 
&& \dis + \frac{\lambda }{2\alpha_2\left(1 - e^{- 2\omega \alpha_2} \right)} \, e^{-(\omega-\theta) \alpha_2}\left( J(\omega) - e^{- \omega \alpha_2} J(0)\right) \\ \ecart
&& \dis - \frac{\lambda^2}{\alpha_1^2\alpha_2^2}\, F_1(\theta) + F_1(\theta) - \frac{\lambda}{2\alpha_2}\, J(\theta),
\end{array}
\end{equation}
with $J(\theta)$ given by \eqref{J}. Our aim is to prove that, for all $\lambda \leqslant 0$, there exists $M > 0$, such that
$$\|(\A - \lambda I)^{-1} F\|_{\L(X)} \leqslant \frac{M}{1+|\lambda|} \|F\|_X,$$
where
\begin{equation}\label{Def norme X}
\|F\|_X = \left\|\left(\begin{array}{c}
F_1 \\
F_2
\end{array}\right)\right\|_X = \|F_1\|_{W_0^{2,p}(0,\omega)} + \|F_2\|_{L^p(0,\omega)}.
\end{equation}
To this end, we consider that $\lambda \in (-\infty, - \varepsilon_0)$, where $\varepsilon_0$ is defined in \refP{Prop A inversible}. We first study $\psi_1''$. From \eqref{Sp}, for $a.e.$ $\theta \in [0,\omega]$, we have
\begin{equation*}
\begin{array}{rcl}
S''(\theta) & = & \dis \frac{\alpha_2 \,e^{-\theta \alpha_2}}{2\left(1 - e^{- 2\omega \alpha_2} \right)}  \left(J(0) - e^{- \omega \alpha_2} J(\omega)\right) - \frac{\lambda}{\alpha_1^2\alpha_2^2} F_1''(\theta) \\ \ecart 
&& \dis + \frac{\alpha_2\,e^{-(\omega-\theta) \alpha_2}}{2\left(1 - e^{- 2\omega \alpha_2} \right)} \left( J(\omega) - e^{- \omega \alpha_2} J(0)\right) - \frac{1}{2\alpha_2} J''(\theta),
\end{array}
\end{equation*}
and from \eqref{J}, we obtain $J''(\theta) = \alpha_2^2 J(\theta) - 2\alpha_2 v(\theta)$, hence
\begin{equation*}
\begin{array}{rcl}
S''(\theta) & = &  \dis \frac{\alpha_2 \,e^{-\theta \alpha_2}}{2\left(1 - e^{- 2\omega \alpha_2} \right)}  \left(J(0) - e^{- \omega \alpha_2} J(\omega)\right) - \frac{\lambda}{\alpha_1^2\alpha_2^2} F_1''(\theta) \\ \ecart 
&& \dis + \frac{\alpha_2\,e^{-(\omega-\theta) \alpha_2}}{2\left(1 - e^{- 2\omega \alpha_2} \right)} \left( J(\omega) - e^{- \omega \alpha_2} J(0)\right) - \frac{\alpha_2}{2} \, J(\theta) + v(\theta),
\end{array}
\end{equation*}
Then, since $\alpha_1 = \sqrt{-\lambda} + i$ and $\alpha_2 = \sqrt{-\lambda} - i$, we have $\left|e^{-\omega\alpha_1}\right| = \left|e^{-\omega\alpha_2}\right| = e^{-\omega\sqrt{-\lambda}} \leqslant 1$ with $-\lambda \geqslant \varepsilon_0$, thus 
\begin{equation*}
\begin{array}{rcl}
\|S''\|_{L^p(0,\omega)} & \leqslant & \dis \frac{\sqrt{1-\lambda} \, \left(|J(0)| + |J(\omega)|\right) }{2\left(1 - e^{- 2\omega \sqrt{\varepsilon_0}} \right)} \left( \int_0^\omega e^{-p\theta \sqrt{-\lambda}}~d\theta\right)^{1/p} \\ \ecart
&& \dis + \frac{\sqrt{1-\lambda} \,\left(|J(0)| + |J(\omega)|\right) }{2\left(1 - e^{- 2\omega \sqrt{\varepsilon_0}} \right)} \left( \int_0^\omega e^{-p(\omega-\theta) \sqrt{-\lambda}}~d\theta\right)^{1/p} \\ \ecart
&& \dis + \frac{-\lambda}{(1-\lambda)^2} \, \|F_1''\|_{L^p(0,\omega)} + \frac{\sqrt{1-\lambda}}{2}\, \|J\|_{L^p(0,\omega)} + \|v\|_{L^p(0,\omega)} \\ \\

& \leqslant & \dis \frac{\sqrt{1-\lambda} \,\left(|J(0)| + |J(\omega)|\right)}{\sqrt{-\lambda}^{1/p} \left(1 - e^{- 2\omega \sqrt{\varepsilon_0}} \right)}  + \frac{1}{1-\lambda} \, \|F_1''\|_{L^p(0,\omega)} \\ \ecart 
&& \dis + \frac{\sqrt{1-\lambda}}{2}\, \|J\|_{L^p(0,\omega)} + \|v\|_{L^p(0,\omega)}, 
\end{array}
\end{equation*}
From \refL{Lem estim J, I, v}, we have
\begin{equation*}
\begin{array}{rcl}
\|S''\|_{L^p(0,\omega)} & \leqslant & \dis \frac{2\sqrt{1-\lambda}}{\sqrt{-\lambda}^{1-1/p+1/p}\, \left(1 - e^{- 2\omega \sqrt{\varepsilon_0}} \right)} \,\|v\|_{L^p(0,\omega)} \\ \ecart
&& \dis + \frac{\sqrt{1-\lambda}}{\sqrt{-\lambda}}\, \|v\|_{L^p(0,\omega)} + \|v\|_{L^p(0,\omega)} + \frac{1}{1-\lambda} \, \|F_1''\|_{L^p(0,\omega)} \\ \\

& \leqslant & \dis \frac{2M_1}{-\lambda \left(1 - e^{- 2\omega \sqrt{\varepsilon_0}} \right)} \, \left(\|F_2\|_{L^p(0,\omega)} + 2 \|F_1\|_{L^p(0,\omega)} + 3 \|F_1''\|_{L^p(0,\omega)}\right) \\ \ecart
&& \dis + \frac{2M_1}{-\lambda} \left(\|F_2\|_{L^p(0,\omega)} + 2 \|F_1\|_{L^p(0,\omega)} + 3 \|F_1''\|_{L^p(0,\omega)}\right) + \frac{1}{1-\lambda} \, \|F_1''\|_{L^p(0,\omega)}.
\end{array}
\end{equation*}
Finally, we obtain
\begin{equation}\label{norme(Sp'')}
\|S''\|_{L^p(0,\omega)} \leqslant \frac{M_2}{-\lambda} \, \left(\|F_2\|_{L^p(0,\omega)} + 2 \|F_1\|_{L^p(0,\omega)} + 3 \|F_1''\|_{L^p(0,\omega)}\right),
\end{equation}
where $M_2 = \frac{2M_1}{1 - e^{- 2\omega \sqrt{\varepsilon_0}}} + 2 M_1 +1$.

Now, we consider $\psi_1''- S''$ which reads as
\begin{equation*}
\begin{array}{rcl}
\psi_1''(\theta) - S''(\theta) & = &\dis  \alpha_2^2\,e^{-\theta \alpha_2} (\beta_1 + \beta_2 + \beta_3 + \beta_4) +  \alpha_2^2\,e^{-(\omega-\theta)\alpha_2} (\beta_3 + \beta_4 - \beta_1 - \beta_2) \\ \ecart
&& \dis +  \alpha_2^2\,\left(e^{-\theta \alpha_1} - e^{-\theta \alpha_2}\right) (\beta_2 + \beta_4) \\ \ecart
&& \dis + \alpha_2^2\,\left(e^{-(\omega-\theta) \alpha_1} - e^{-(\omega-\theta) \alpha_2}\right) (\beta_4 - \beta_2),
\end{array}
\end{equation*}
where $\beta_i$, $i=1,2,3,4$ are given by \eqref{beta i} and \eqref{U-V M}. Then, we have
\begin{equation*}
\begin{array}{rcl}
\|\psi_1'' - S''\|_{L^p(0,\omega)} & \leqslant & \dis (1-\lambda) (|\beta_1+\beta_2|+|\beta_3+\beta_4|) \left(\int_0^\omega e^{-p\theta\sqrt{-\lambda}}~d\theta\right)^{1/p} \\ \ecart
&& \dis + (1-\lambda) (|\beta_1 + \beta_2|+|\beta_3 + \beta_4|) \left(\int_0^\omega e^{-p(\omega-\theta)\sqrt{-\lambda}}~d\theta\right)^{1/p} \\ \ecart
&& \dis + (1-\lambda) (|\beta_2|+|\beta_4|) \left(\int_0^\omega \left|e^{-\theta\alpha_1} - e^{-\theta\alpha_2}\right|^p ~d\theta\right)^{1/p} \\ \ecart
&& \dis + (1-\lambda) (|\beta_2|+|\beta_4|) \left(\int_0^\omega \left|e^{-(\omega-\theta)\alpha_1} - e^{-(\omega-\theta)\alpha_2}\right|^p~d\theta\right)^{1/p}.
\end{array}
\end{equation*}
Since $-\lambda \geqslant \varepsilon_0 > 0$, we have
$$\frac{1-\lambda}{-\lambda} = 1 + \frac{1}{-\lambda} \leqslant 1 + \frac{1}{\varepsilon_0},$$
and thus, from \refL{Lem estim norm Lp difference exp} and \refL{Lem estimation beta i}, we obtain 
$$\begin{array}{lll}
\|\psi_1'' - S''\|_{L^p(0,\omega)} & \leqslant & \dis\frac{2(1-\lambda) \left(|\beta_1 + \beta_2| + |\beta_3 + \beta_4| \right)}{\sqrt{-\lambda}^{1/p}} + \frac{8 (1-\lambda) (|\beta_2|+|\beta_4|)}{\sqrt{-\lambda}^{1+1/p}} \\ \ecart

& \leqslant & \dis \frac{M_3 \left(\|F_2\|_{L^p(0,\omega)} + 2 \|F_1\|_{L^p(0,\omega)} + 3 \|F_1''\|_{L^p(0,\omega)}\right)}{-\lambda},
\end{array}$$
where $M_3 = \frac{4 M_1 \left(1+\frac{1}{\omega}\right)\left(1+\frac{1}{\varepsilon_0}\right)}{\left(1 - e^{-2\omega \sqrt{\varepsilon_0}} - 2\omega \sqrt{\varepsilon_0}\, e^{-\omega \sqrt{\varepsilon_0}}\right) \left(1 - e^{-\omega \sqrt{\varepsilon_0}}\right)}$.

From \eqref{norme(Sp'')}, it follows that
$$\begin{array}{rcl}
\|\psi_1''\|_{L^p(0,\omega)} & \leqslant & \dis \|\psi_1'' - S''\|_{L^p(0,\omega)} + \|S''\|_{L^p(0,\omega)} \\ \ecart

& \leqslant & \dis \frac{M_3 + M_2}{-\lambda}\left(\|F_2\|_{L^p(0,\omega)} + 2 \|F_1\|_{L^p(0,\omega)} + 3 \|F_1''\|_{L^p(0,\omega)}\right) \\ \ecart

& \leqslant & \dis \frac{3(M_2 + M_3)}{-\lambda}\|F\|_{X}.
\end{array}$$ 
From the Poincar\'e inequality, there exists $C_\omega > 0$ such that
\begin{equation}\label{Norme W2p psi 1}
\|\psi_1\|_{W_0^{2,p}(0,\omega)} \leqslant C_\omega \|\psi_1''\|_{L^p(0,\omega)} \leqslant \frac{3 C_\omega (M_2 + M_3)}{-\lambda} \|F\|_X.
\end{equation}
Now, we focus ourselves on $\|\psi_2\|_{L^p(0,\omega)}$. As previously, by \eqref{psi2}, using \refL{Lem estim norm Lp difference exp} and \refL{Lem estimation beta i}, we obtain
\begin{equation*}
\begin{array}{lll}
\|\psi_2\|_{L^p(0,\omega)} & \leqslant & \dis |\lambda|  (|\beta_1 + \beta_2| + |\beta_3 + \beta_4|) \left(\int_0^\omega e^{-p\theta\sqrt{-\lambda}}~d\theta\right)^{1/p}\\ \ecart
&& \dis + |\lambda|  (|\beta_1 + \beta_2| + |\beta_3 + \beta_4|) \left(\int_0^\omega e^{-p(\omega-\theta)\sqrt{-\lambda}}~d\theta\right)^{1/p} \\ \ecart
&& \dis + |\lambda| (|\beta_2| + |\beta_4|) \left(\int_0^\omega \left|e^{-\theta\alpha_1} - e^{-\theta\alpha_2}\right|^p~d\theta\right)^{1/p} \\ \ecart
&&\dis + |\lambda| (|\beta_4| + |\beta_2|) \left(\int_0^\omega \left|e^{-(\omega-\theta)\alpha_1} - e^{-(\omega-\theta)\alpha_2}\right|^p~d\theta\right)^{1/p} \\ \ecart
&& \dis + \left\|\lambda S + F_1\right\|_{L^p(0,\omega)} \\ \\

& \leqslant & \dis \frac{M_2 + M_3}{-\lambda}\left(\|F_2\|_{L^p(0,\omega)} + 2 \|F_1\|_{L^p(0,\omega)} + 3 \|F_1''\|_{L^p(0,\omega)}\right) \\ \ecart
&& \dis + \left\|\lambda S + F_1\right\|_{L^p(0,\omega)}.
\end{array}
\end{equation*}
Moreover, from \eqref{lambda Sp + F1} and \refL{Lem estim J, I, v}, we deduce
\begin{equation*}
\begin{array}{rcl}
\left\|\lambda S + F_1\right\|_{L^p(0,\omega)} & \leqslant & \dis \frac{-\lambda \left(|J(0)| + |J(\omega)|\right)}{2 \sqrt{1-\lambda}\, \left(1 - e^{- 2\omega \sqrt{\varepsilon_0}} \right)} \,\left(\int_0^\omega e^{-p\theta \sqrt{-\lambda}} ~d\theta\right)^{1/p} \\ \ecart 
&& \dis + \frac{-\lambda \left(|J(0)| + |J(\omega)|\right)}{2 \sqrt{1-\lambda}\, \left(1 - e^{- 2\omega \sqrt{\varepsilon_0}} \right)} \,\left(\int_0^\omega e^{-p(\omega-\theta) \sqrt{-\lambda}} ~d\theta\right)^{1/p} \\ \ecart
&& \dis + \left|1 - \frac{\lambda^2}{\alpha_1^2\alpha_2^2}\right|\, \|F_1\|_{L^p(0,\omega)} + \frac{-\lambda}{2\sqrt{1-\lambda}}\, \|J\|_{L^p(0,\omega)} \\ \\

& \leqslant & \dis \frac{-\lambda \left(|J(0)| + |J(\omega)|\right)}{\sqrt{-\lambda}^{1+1/p} \left(1 - e^{- 2\omega \sqrt{\varepsilon_0}} \right)} + \left|1 - \frac{\lambda^2}{(1-\lambda)^2}\right|\, \|F_1\|_{L^p(0,\omega)} +  \|v\|_{L^p(0,\omega)} \\ \\

& \leqslant & \dis \frac{-2\lambda}{\sqrt{-\lambda}^{2}\, \left(1 - e^{- 2\omega \sqrt{\varepsilon_0}} \right)} \, \|v\|_{L^p(0,\omega)} + \frac{1-2\lambda}{(1-\lambda)^2}\, \|F_1\|_{L^p(0,\omega)} + \|v\|_{L^p(0,\omega)} \\ \\

& \leqslant & \dis \left(\frac{2}{1 - e^{- 2\omega \sqrt{\varepsilon_0}}} +1 \right)\, \|v\|_{L^p(0,\omega)} + \left(\frac{1}{\lambda^2}+ \frac{1}{-\lambda}\right)\, \|F_1\|_{L^p(0,\omega)} \\ \\

& \leqslant & \dis \left(\frac{2}{1 - e^{- 2\omega \sqrt{\varepsilon_0}}} +1 \right)\, \|v\|_{L^p(0,\omega)} + \frac{\frac{1}{\varepsilon_0}+ 1}{-\lambda} \|F_1\|_{L^p(0,\omega)}.
\end{array}
\end{equation*}
Then, from \refL{Lem estim J, I, v}, we obtain
$$\left\|\lambda S + F_1\right\|_{L^p(0,\omega)} \leqslant \frac{M_4}{-\lambda} \left(\|F_2\|_{L^p(0,\omega)} + 2 \|F_1\|_{L^p(0,\omega)} + 3 \|F_1''\|_{L^p(0,\omega)}\right),$$
where $M_4 =  \left(\frac{2}{1 - e^{- 2\omega \sqrt{\varepsilon_0}}} + 1 \right) M_1 + \frac{1}{\varepsilon_0} + 1$.

Thus, it follows that
$$\begin{array}{lll}
\|\psi_2\|_{L^p(0,\omega)} & \leqslant & \dis \frac{M_2+M_3+M_4}{-\lambda} \left(\|F_2\|_{L^p(0,\omega)} + 2 \|F_1\|_{L^p(0,\omega)} + 3 \|F_1''\|_{L^p(0,\omega)}\right) \\ \ecart
& \leqslant & \dis\frac{3\left(M_2+M_3+M_4\right)}{-\lambda} \|F\|_X.
\end{array}$$
Finally, from \eqref{Norme W2p psi 1}, we have
$$\|(\A-\lambda I)^{-1} F\|_X = \|\psi_1\|_{W^{2,p}_0(0,\omega)} + \|\psi_2\|_{L^p(0,\omega)} \leqslant \frac{M}{|\lambda|} \|F\|_X,$$
where $M = 3 \left((C_\omega + 1)(M_2+M_3) + M_4\right)$, which gives the result since $0\in\rho(\A)$ from \refP{Prop A inversible}. 
\end{proof}
Since $-\A$ is the realization of $\L_2$, we deduce the following corollary. 
\begin{Cor}\label{Cor L2}
There exist $\varepsilon_{\L_2} \in (0, \pi)$ small enough and $M_{\L_2} > 0$ such that
$$\forall ~z \in \Sigma_{\L_2} := \overline{B(0,\varepsilon_0)} \cup \{z \in \CC\setminus\{0\} : |\arg(z)| \leqslant \varepsilon_{\L_2}\},$$
we have
$$\left\|\left(\L_2 - z I\right)^{-1}\right\|_{\L(X)} \leqslant \frac{M_{\L_2}}{1+|z|}.$$
Therefore, assumption $(H_1)$ in \refS{sect M1 M2} is verified for $\L_2$ with
\begin{equation}\label{theta L2}
\theta_{\L_2} = \pi - \varepsilon_{\L_2}.
\end{equation}
\end{Cor}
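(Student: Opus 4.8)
The plan is to obtain \refC{Cor L2} as a soft consequence of the resolvent bounds for $\A$ on $X$ established in \refP{Prop A inversible} and \refP{Prop estim resolvante A}, transferred to the realization $\L_2$ of $-\A$ on $\E = L^p(0,+\infty;X)$. First I would record the pointwise description of the resolvent of $\L_2$: if $-z \in \rho(\A)$ and $R \in \E$, then the function $t \mapsto V(t) := -(\A + zI)^{-1}R(t)$ satisfies $V(t) \in D(\A)$ for a.e. $t$, belongs to $\E$ with $\|V\|_{\E} \leqslant \|(\A+zI)^{-1}\|_{\L(X)}\,\|R\|_{\E}$, and $(\L_2 - zI)V = -(\A + zI)V = R$; injectivity of $\L_2 - zI$ being immediate, this gives $z \in \rho(\L_2)$ and
$$\|(\L_2 - zI)^{-1}\|_{\L(\E)} \;\leqslant\; \|(\A + zI)^{-1}\|_{\L(X)} \;=\; \|(\A - (-z)I)^{-1}\|_{\L(X)}.$$
Hence it suffices to exhibit $\varepsilon_{\L_2} \in (0,\pi/2)$ and $M_{\L_2} > 0$ such that $\mu := -z$ lies in $\rho(\A)$ with $\|(\A - \mu I)^{-1}\|_{\L(X)} \leqslant M_{\L_2}/(1+|\mu|)$ whenever $z \in \Sigma_{\L_2}$, keeping in mind that $|\mu| = |z|$, that $z \in \overline{B(0,\varepsilon_0)}$ forces $\mu \in \overline{B(0,\varepsilon_0)}$, and that $|\arg z| \leqslant \varepsilon_{\L_2}$ forces $\text{Re}(\mu) = -\text{Re}(z) \leqslant 0$ together with $|\text{Im}(\mu)| = |\text{Im}(z)| \leqslant |z|\sin\varepsilon_{\L_2}$.

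For $\mu \in \overline{B(0,\varepsilon_0)}$ I would simply use \refP{Prop A inversible}: this compact set is contained in $\rho(\A)$, hence $w \mapsto (\A - wI)^{-1}$ is bounded on it, say by $C_0$, and since $1+|\mu| \leqslant 1+\varepsilon_0$ the bound holds with constant $C_0(1+\varepsilon_0)$. For $\mu$ with $|\mu| > \varepsilon_0$ and $z$ in the sector $|\arg z| \leqslant \varepsilon_{\L_2}$, I would run a Neumann-series perturbation from the point $\lambda_0 := \text{Re}(\mu) = -\text{Re}(z) \leqslant -|z|\cos\varepsilon_{\L_2} < 0$ of the negative real axis: by \refP{Prop estim resolvante A}, $\lambda_0 \in \rho(\A)$ and $\|(\A - \lambda_0 I)^{-1}\|_{\L(X)} \leqslant M/(1+|\lambda_0|)$, while $|\mu - \lambda_0| = |\text{Im}(\mu)| \leqslant |\mu|\sin\varepsilon_{\L_2}$ and $1+|\lambda_0| \geqslant |\mu|\cos\varepsilon_{\L_2}$, so that $|\mu - \lambda_0|\,\|(\A - \lambda_0 I)^{-1}\|_{\L(X)} \leqslant M\tan\varepsilon_{\L_2}$. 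Fixing $\varepsilon_{\L_2}$ once and for all small enough that $M\tan\varepsilon_{\L_2} \leqslant 1/2$, the series $(\A - \mu I)^{-1} = (\A - \lambda_0 I)^{-1}\sum_{n\geqslant 0}\bigl((\mu-\lambda_0)(\A-\lambda_0 I)^{-1}\bigr)^n$ converges, whence $\mu \in \rho(\A)$ and
$$\|(\A - \mu I)^{-1}\|_{\L(X)} \;\leqslant\; \frac{\|(\A - \lambda_0 I)^{-1}\|_{\L(X)}}{1 - |\mu - \lambda_0|\,\|(\A - \lambda_0 I)^{-1}\|_{\L(X)}} \;\leqslant\; \frac{2M}{1+|\lambda_0|} \;\leqslant\; \frac{2M}{\cos\varepsilon_{\L_2}}\cdot\frac{1}{1+|\mu|}.$$
Note that the case $|\mu| \leqslant \varepsilon_0$ is always handled by the disc estimate, so these two cases together cover all $\mu$ arising from $z \in \Sigma_{\L_2}$.

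Taking $M_{\L_2} := \max\{C_0(1+\varepsilon_0),\, 2M/\cos\varepsilon_{\L_2}\}$ and transferring back through the first step yields $\|(\L_2 - zI)^{-1}\|_{\L(\E)} \leqslant M_{\L_2}/(1+|z|)$ for every $z \in \Sigma_{\L_2}$, which is the asserted estimate. Finally, for $(H_1)$: $\L_2$ is closed and densely defined on $\E$, being the realization of the closed, densely defined operator $-\A$ (\refP{Prop estim resolvante A}); and for any $R>0$ one has $\Sigma_{2,R} = \{z : |z|\geqslant R,\ |\arg z| < \varepsilon_{\L_2}\} \subset \Sigma_{\L_2}$, on which $M_{\L_2}/(1+|z|) \leqslant M_{\L_2}/|z|$, so $(H_1)$ from \refS{sect M1 M2} holds for $\L_2$ with $\theta_{\L_2} = \pi - \varepsilon_{\L_2} \in [0,\pi)$, as claimed. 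I do not expect a genuine obstacle here — the argument is entirely soft — the only points needing care being the one-time choice of $\varepsilon_{\L_2}$ small enough for the perturbation step and the bookkeeping that keeps every estimate in the normalized form $M_{\L_2}/(1+|z|)$ while gluing the bound near the origin to the sectorial one.
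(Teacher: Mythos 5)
Your proposal is correct and supplies exactly the argument the paper leaves implicit (it states \refC{Cor L2} with only the remark that $-\A$ is the realization of $\L_2$): transfer the resolvent bound pointwise from $\A$ on $X$ to $\L_2$ on $\E$, cover $\overline{B(0,\varepsilon_0)}$ by compactness using \refP{Prop A inversible}, and extend the negative-real-axis estimate of \refP{Prop estim resolvante A} to a small sector by a Neumann series anchored at $\lambda_0=\mathrm{Re}(\mu)$, with $\varepsilon_{\L_2}$ fixed so that $M\tan\varepsilon_{\L_2}\leqslant 1/2$. The bookkeeping ($1+|\lambda_0|\geqslant\cos(\varepsilon_{\L_2})(1+|\mu|)$, the two cases covering all of $\Sigma_{\L_2}$, and the inclusion $\Sigma_{2,R}\subset\Sigma_{\L_2}$ giving $(H_1)$ with $\theta_{\L_2}=\pi-\varepsilon_{\L_2}$) is all sound.
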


\begin{Rem}\label{Rem valeurs propres L2}
$\A$ is anti-compact; since $\sigma(-\L_2) = \sigma(\A)$ then $\sigma(-\L_2)$ is uniquely composed by isolated eigenvalues $\left( \lambda _{j}\right) _{j\geqslant 1}$ such that $\left\vert \lambda _{j}\right\vert \rightarrow +\infty$, see \cite{kato}, Theorem 6.29, p. 187. More precisely, the calculus of the resolvent operator $\left(\A - \lambda I\right)^{-1}$ requires that, for all $\lambda \in \CC \setminus \RR_+$, $U_-$ and $U_+$ defined by \eqref{U-V M} do not vanish. Since $U_-U_+ = 0$ is equivalent to
$$\left(\sinh(\omega \sqrt{-\lambda}) - \omega \sqrt{-\lambda}\right)\left( \sinh(\omega \sqrt{-\lambda}) + \omega \sqrt{-\lambda}\right) = 0,$$
then, using $(z_j)_{j\geqslant 1}$ defined in \refS{Sect Intro}, we deduce that
$$\forall\, j \geqslant 1, \quad \lambda_j = -\frac{z_j^2}{\omega^2} \in \CC \setminus \RR_+.$$
\end{Rem}

Now, we prove that operator $\A$ has Bounded Imaginary Powers, see \refD{Def BIP}.
\begin{Prop}\label{Prop A BIP}
$\A \in$ BIP\,$(X,\theta_{\A})$, for any $\theta_{\A} \in (0, \pi)$.
\end{Prop}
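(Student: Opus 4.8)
The plan is to verify the three requirements of \refD{Def BIP} for the operator $T_2 = \A$. Requirement $i)$ is already available: $\A$ is densely defined by \refP{Prop estim resolvante A} and $0 \in \rho(\A)$ by \refP{Prop A inversible}, so $\overline{D(\A)} = \overline{R(\A)} = R(\A) = X$; in particular $\A$ is injective. The substance is in $ii)$ and $iii)$, that is, in showing that $\A^{is} \in \L(X)$ with $\|\A^{is}\|_{\L(X)} \leqslant C e^{|s|\theta_{\A}}$, together with the sectoriality of $\A$.

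First I would promote the resolvent estimate of \refP{Prop estim resolvante A}, which is only stated on $\RR_-$, to a genuine sectorial one. By \refR{Rem valeurs propres L2} one has $\sigma(\A) = \{\lambda_j : j \geqslant 1\}$ with $\lambda_j = -z_j^2/\omega^2$; since $\text{Re}(z_j)$ grows much more slowly than $|\text{Im}(z_j)| \to +\infty$, a short computation gives $\arg(\lambda_j) \to 0$, so for every $\varepsilon > 0$ all but finitely many $\lambda_j$ lie in $S_\varepsilon$, while the finitely many exceptions lie in a fixed sector $\overline{S_{\omega_0}}$ with $\omega_0 < \pi$; together with $0 \in \rho(\A)$ this gives $\sigma(\A) \subset \overline{S_{\omega_0}}$. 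For the bound $\|(\A - zI)^{-1}\|_{\L(X)} \leqslant C_{\omega'}/|z|$ on $\CC \setminus S_{\omega'}$, for arbitrary $\omega' \in (\omega_0, \pi)$, I would reuse the explicit resolvent representation of \refP{Prop sol psi 1} — which is holomorphic in the parameter and hence extends from $\lambda < 0$ to all of $\rho(\A)$ — together with lower bounds on $U_-$, $U_+$ away from $\sigma(\A)$ of the kind already used in \refP{Prop estim resolvante A}: this controls the resolvent on large circles, and continuity on $\rho(\A)$ handles the bounded part. Hence $\A$ is sectorial of angle $\omega_0 < \pi$, which, incidentally, is already enough to define $-\sqrt{\A}$ and make it the generator of a bounded analytic semigroup.

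Since $X$ is a UMD space (both $W_0^{2,p}(0,\omega)$ and $L^p(0,\omega)$ are UMD for $p \in (1,+\infty)$, see \refD{Def Banach}), it then suffices to prove that $\A$ admits a bounded $H^\infty(S_{\omega'})$-functional calculus for every $\omega' \in (\omega_0, \pi)$: applying it to $z \mapsto z^{is}$ and using McIntosh's convergence lemma (see \cite{pruss-sohr}, \cite{haase}) yields $\|\A^{is}\|_{\L(X)} \leqslant C_{\omega'} e^{\omega'|s|}$ for every such $\omega'$, hence $\A \in \mathrm{BIP}(X,\theta_{\A})$ for $\theta_{\A}$ in the corresponding range. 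To obtain the $H^\infty$-bound I would estimate, for $h \in H^\infty(S_{\omega'})$ bounded near $0$ and $\infty$,
\[
h(\A) = \frac{1}{2i\pi}\int_{\partial S_{\omega''}} h(z)\,(\A - zI)^{-1}\,dz, \qquad \omega_0 < \omega'' < \omega',
\]
using the sectorial bounds just obtained and the explicit kernel of $(\A - zI)^{-1}$ coming from \refP{Prop sol psi 1}. An alternative route, which I would expect to be closer to the paper's own, exploits the companion-matrix structure of $\A$ (whose lower entries are $-(\partial_\theta^2+1)^2$ and $-2(\partial_\theta^2-1)$): one factorizes the operator pencil $Z^2 - 2(\partial_\theta^2 - 1)Z + (\partial_\theta^2+1)^2$, uses the $\mathrm{BIP}$ property of the clamped fourth-order operator $(\partial_\theta^2+1)^2$ on $(0,\omega)$ — whose invertibility is \cite{thorel}, and which is of the type treated in \cite{LLMT}, \cite{LMMT} — and transfers the estimate to $\A$ through the similarity this factorization induces.

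The main obstacle is to make the resolvent estimates, and then the $H^\infty$ estimates, uniform up to the positive real axis, where $\sigma(\A)$ accumulates. Because of the clamped boundary conditions $\A$ is not a polynomial in the Dirichlet operator $-\partial_\theta^2$, so the decisive bounds must be read off the explicit Green kernel of \refP{Prop sol psi 1} through the functions $U_-$, $U_+$, and one must check that $z^{is}(\A - zI)^{-1}$ decays fast enough along $\partial S_{\omega''}$ for the contour integral to converge absolutely. Once these estimates are in place, the remainder is routine bookkeeping.
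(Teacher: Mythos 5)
Your handling of requirement $i)$ is fine, but the mechanism you propose for $ii)$--$iii)$ contains a gap that cannot be closed by ``routine bookkeeping''. You reduce BIP to a bounded $H^\infty(S_{\omega'})$-calculus and propose to obtain the latter from the contour integral $h(\A)=\frac{1}{2i\pi}\int_{\partial S_{\omega''}}h(z)(\A-zI)^{-1}\,dz$ ``using the sectorial bounds just obtained and the explicit kernel''. This cannot work as written: for $h(z)=z^{is}$ one has $|h(z)|=e^{-s\arg z}$, bounded below on each ray of the contour, while $\|(\A-zI)^{-1}\|_{\L(X)}\geqslant \operatorname{dist}(z,\sigma(\A))^{-1}\gtrsim |z|^{-1}$ whatever the kernel looks like; hence the integrand is never absolutely integrable near $0$ and $\infty$, and the very check you announce (``that $z^{is}(\A-zI)^{-1}$ decays fast enough \ldots for the contour integral to converge absolutely'') is doomed to fail. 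Sectoriality alone never yields a bounded $H^\infty$-calculus or BIP; the entire content of the proposition is the cancellation that makes the regularized, conditionally convergent integral bounded, and your plan names this obstacle without supplying any mechanism to overcome it. Note also that even a successful $H^\infty(S_{\omega'})$-calculus would only give BIP$(X,\theta_\A)$ for $\theta_\A$ above the spectral angle of $\A$ (whose eigenvalues $\lambda_j=-z_j^2/\omega^2$ are not positive reals), not for every $\theta_\A\in(0,\pi)$; and the UMD property of $X$ is irrelevant in the direction $H^\infty\Rightarrow$ BIP. Your alternative factorization route fails for the reason you yourself give: the clamped conditions $\psi_1(0)=\psi_1'(0)=\psi_1(\omega)=\psi_1'(\omega)=0$ are not respected by the factorization of the pencil, so no similarity to a function of the Dirichlet Laplacian is available.

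The paper supplies exactly the missing cancellation by a direct computation. It uses the real-axis representation $(\A+I)^{-\varepsilon+ir}=\frac{1}{\Gamma_{\varepsilon,r}}\int_0^{+\infty}\lambda^{-\varepsilon+ir}(\A+I+\lambda I)^{-1}\,d\lambda$, which only requires the resolvent bound already proved on $(-\infty,0]$ in \refP{Prop estim resolvante A} (so no sectorial extension of the kind you describe is needed). It then isolates the most singular, convolution part of the explicit Green kernel of \refP{Prop sol psi 1}, passes to the Fourier side where $\F(e^{-|\cdot|\alpha})(\xi)=2\alpha/(\alpha^2+4\pi^2\xi^2)$, integrates in $\lambda$ using $\int_0^{+\infty}\sigma^{-z}(\sigma+1)^{-1}d\sigma=\Gamma(z)\Gamma(1-z)$, and finds that the limit as $\varepsilon\to 0$ is the Fourier multiplier $m(\xi)=(\lambda_2^{ir}-\lambda_1^{ir})/(8\pi\xi)$, which satisfies $\sup_\xi|m(\xi)|+\sup_\xi|\xi\,m'(\xi)|=\frac{19}{32}|r|$ because the difference of two $ir$-th powers compensates the $1/\xi$ singularity. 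Mihlin's theorem then gives $\|(\A+I)^{ir}\|_{\L(X)}\leqslant C_{\gamma,p}e^{\gamma|r|}$ for every $\gamma>0$, the non-convolution boundary terms are handled in the manner of \cite{seeley}, and \cite{arendt-bu-haase} transfers BIP from $\A+I$ to $\A$. This explicit multiplier estimate, with its factor $|r|$, is the decisive step that your proposal leaves unaccounted for.
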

\begin{proof}
This proof follow the same steps than those in the proofs of Proposition 3.1 in \cite{labbas-moussaoui} or Proposition 4.1 in \cite{labbas-sadallah}. From \refP{Prop estim resolvante A}, we deduce that $\A$ is a sectorial operator and from \refP{Prop A inversible}, we have $0 \in \rho(\A)$. Moreover, since $X=W_{0}^{2,p}(0,\omega)\times L^{p}(0,\omega)$ is a reflexive space, from \cite{haase}, Proposition 2.1.1, h), statement $i)$ of \refD{Def BIP} holds with $T_2 = \A$.

For all $\lambda > 0$, we have
$$\A \left( \begin{array}{c}
\psi _{1} \\ 
\psi _{2}
\end{array}\right) + \lambda \left( \begin{array}{c}
\psi _{1} \\ 
\psi _{2}
\end{array}\right) = \left( \begin{array}{c}
F_{1} \\ 
F_{2}
\end{array}\right),$$
which writes
\begin{equation}\label{Syst BIP}
\left\{ \begin{array}{l}
\psi_2 = F_1 - \lambda \psi_1 \\ \ecart
\psi _{1}^{(4)} + 2(1 - \lambda) \psi''_{1} - (\lambda + 1)^{2}\psi_{1} = G_\lambda \\ \ecart
\psi_{1}(0) = \psi_{1}(\omega) = \psi'_{1}(0) = \psi'_{1}(\omega) = 0,
\end{array}\right.
\end{equation}
where 
$$G_\lambda = - F_{2} - 2 (F''_{1}-F_1) + \lambda F_1.$$
From \refP{Prop sol psi 1}, the explicit expression of the solution to problem \eqref{Syst BIP} is
$$\left( \begin{array}{c}
\psi_{1} \\ 
\psi_{2}
\end{array}\right) = \left(\A + \lambda I\right)^{-1}\left( \begin{array}{c}
F_{1} \\ 
F_{2}
\end{array}\right).$$
Let $\varepsilon >0$ and $r \in \RR$. We have 
$$\left[\left(\A + I\right)^{-\varepsilon + ir} \left(\begin{array}{c}
F_1 \\
F_2
\end{array} \right)\right](\theta) = \frac{1}{\Gamma_{\varepsilon,r}}\int_0^{+\infty} \lambda^{-\varepsilon+ir}\left[\left(\A + I + \lambda I\right)^{-1}\left(\begin{array}{c}
F_1 \\
F_2
\end{array} \right)\right](\theta)d\lambda,$$
where $\Gamma_{\varepsilon,r} = \Gamma(1-\varepsilon + ir)\Gamma(\varepsilon-ir)$, see for instance \cite{triebel}, $(6)$, p. 100.

Now, we only focus on the first component $\psi_1$ since calculations are similar for $\psi_2$. Moreover, from \cite{seeley}, we only have to study the convolution term, which is the most singular term. In our case, this term is given by  
$$\begin{array}{lll}
I_{S}(\theta) & = & \dis\frac{1}{4\alpha_1\alpha_2}\int_0^\omega e^{-|\theta-s|\alpha_2}\int_0^\omega e^{-|s-t|\alpha_1} G_{\lambda+1}(t)~ dtds \\ \ecart

& = & \dis \frac{1}{4\alpha_1\alpha_2}\int_0^\omega e^{-|\theta-s|\alpha_2}\int_0^\omega e^{-|s-t|\alpha_1} \left(-F_2 - 2 (F''_1 - F_1) + (\lambda + 1) F_1\right)(t)~ dtds.
\end{array}$$
Note that, in order to simplify calculations, we have used the convolutions terms given in \eqref{Sp M} and \eqref{v0 M}. Moreover, since $F_1(0) = F_1''(0) = F_2(0) = F_1(\omega) = F_1''(\omega) = F_2(\omega) = 0$, we wet
$$\widetilde{G_0}(x) = \left\{\begin{array}{ll}
-F_2(x) - 2F_1''(x) + 2F_1(x), &\text{if } x \in [0,\omega] \\
0,    &\text{else},
\end{array}\right. \quad \text{with} \quad \widetilde{F_1}(x) = \left\{\begin{array}{ll}
F_1(x), &\text{if } x \in [0,\omega] \\
0,    &\text{else},
\end{array}\right.$$
and 
$$E_\alpha (x) = e^{-|x| \alpha}.$$
It follows that
$$\begin{array}{lll}
I_{\varepsilon,r} (\theta) & = & \dis \frac{1}{\Gamma_{\varepsilon,r}}\int_0^{+\infty} \lambda^{-\varepsilon + ir} I_S (\theta)~ d\lambda \\ \\

& = & \dis \frac{1}{\Gamma_{\varepsilon,r}}\int_0^{+\infty} \frac{\lambda^{-\varepsilon + ir}}{4\alpha_1\alpha_2} \left(E_{\alpha_2} \star \left(E_{\alpha_1}\star \left(\widetilde{G_0}+(\lambda+1)\widetilde{F_1}\right)\right)\right)(\theta)~ d\lambda \\ \\

& = & \dis \frac{1}{\Gamma_{\varepsilon,r}}\int_0^{+\infty} \frac{\lambda^{-\varepsilon + ir}}{4\alpha_1\alpha_2} \F^{-1}\left(\F\left(E_{\alpha_2} \star \left(E_{\alpha_1}\star \left(\widetilde{G_0}+(\lambda+1)\widetilde{F_1}\right)\right)\right)(\xi) \right)(\theta)~ d\lambda \\ \\

& = & \dis \F^{-1}\left(\frac{1}{\Gamma_{\varepsilon,r}}\int_0^{+\infty} \frac{\lambda^{-\varepsilon + ir}}{4\alpha_1\alpha_2} \F\left(E_{\alpha_2}\right)(\xi) \F\left(E_{\alpha_1}\right)(\xi) \F\left(\widetilde{G_0}+(\lambda+1)\widetilde{F_1}\right)(\xi)~ d\lambda\right)(\theta) \\ \\

& = & \dis \F^{-1}\left(\frac{1}{\Gamma_{\varepsilon,r}}\int_0^{+\infty} \frac{\lambda^{-\varepsilon + ir}}{4\alpha_1\alpha_2} \F\left(E_{\alpha_2}\right)(\xi) \F\left(E_{\alpha_1}\right)(\xi) ~ d\lambda \, \F\left(\widetilde{G_0}\right)(\xi)\right)(\theta) \\ \ecart
&& \dis + \F^{-1}\left(\frac{1}{\Gamma_{\varepsilon,r}}\int_0^{+\infty} \frac{\lambda^{-\varepsilon + ir}(\lambda+1)}{4\alpha_1\alpha_2} \F\left(E_{\alpha_2}\right)(\xi) \F\left(E_{\alpha_1}\right)(\xi)~ d\lambda \, \F\left(\widetilde{F_1}\right)(\xi)\right)(\theta).
\end{array}$$
We recall that 
$$\F\left(E_\alpha\right)(\xi) = \frac{2\alpha}{\alpha^2 + 4 \pi^2\xi^2},$$
and here $\alpha_1 = \sqrt{\lambda + 1} + i$ and $\alpha_2 = \sqrt{\lambda + 1} - i$. Hence
$$\begin{array}{lll}
\dis \frac{\lambda^{-\varepsilon + ir}}{4\alpha_1\alpha_2} \F\left(E_{\alpha_2}\right)(\xi) \F\left(E_{\alpha_1}\right)(\xi) & = & \dis \frac{\lambda^{-\varepsilon + ir}}{4\alpha_1\alpha_2} \frac{4 \alpha_1 \alpha_2}{(\alpha_1^2 + 4\pi^2\xi^2)(\alpha_2^2 + 4\pi^2\xi^2)} \\ \ecart
& = & \dis\frac{\lambda^{-\varepsilon + ir}}{\alpha_1^2\alpha_2^2 + 4\pi^2\xi^2(\alpha_1^2 + \alpha_2^2) + 16\pi^4\xi^4} \\ \ecart

& = & \dis\frac{\lambda^{-\varepsilon + ir}}{\lambda^2 + 4(1+2\pi^2\xi^2)\lambda + 4(1 + 4 \pi^4\xi^4)} \\ \ecart

& = & \dis\frac{\lambda^{-\varepsilon + ir}}{(\lambda + \lambda_1)(\lambda + \lambda_2)},
\end{array}$$
where
$$\left\{\begin{array}{l}
\dis\lambda_1 = 2 + 4\pi\xi + 4 \pi^2\xi^2 = 4\pi^2\left(\xi - \frac{(1+i)}{2\pi}\right) \left(\xi - \frac{(1-i)}{2\pi}\right) \\ \ecart

\dis\lambda_2 = 2 - 4\pi\xi + 4 \pi^2\xi^2 = 4\pi^2\left(\xi + \frac{(1+i)}{2\pi}\right) \left(\xi + \frac{(1-i)}{2\pi}\right).
\end{array}\right.$$
Thus, since we have
$$\frac{1}{(\lambda + \lambda_1)(\lambda + \lambda_2)} = \frac{1}{\lambda_1 - \lambda_2} \left(- \frac{1}{\lambda + \lambda_1} + \frac{1}{\lambda + \lambda_2}\right),$$
it follows that
$$\frac{\lambda^{-\varepsilon + ir}}{4\alpha_1\alpha_2} \F\left(E_{\alpha_2}\right)(\xi) \F\left(E_{\alpha_1}\right)(\xi) = \frac{1}{8\pi\xi} \left(- \frac{\lambda^{-\varepsilon + ir}}{\lambda + \lambda_1} + \frac{\lambda^{-\varepsilon + ir}}{\lambda + \lambda_2}\right).$$
Then, setting 
$$\sigma_1 = \frac{\lambda}{\lambda_1} \quad \text{and} \quad \sigma_2 = \frac{\lambda}{\lambda_2},$$
we obtain
$$\begin{array}{lll}
\dis\int_0^{+\infty} \frac{\lambda^{-\varepsilon + ir}}{4\alpha_1\alpha_2} \F\left(E_{\alpha_2}\right)(\xi) \F\left(E_{\alpha_1}\right)(\xi)~ d\lambda & = & \dis \frac{1}{8\pi\xi}\left(-\int_0^{+\infty}  \frac{\lambda^{-\varepsilon + ir}}{\lambda + \lambda_1} ~d\lambda + \int_0^{+\infty}  \frac{\lambda^{-\varepsilon + ir}}{\lambda + \lambda_2} ~d\lambda \right) \\ \\

& = & \dis -\frac{\lambda_1^{-\varepsilon + ir}}{8\pi\xi} \int_0^{+\infty}  \frac{\sigma_1^{-\varepsilon + ir}}{\sigma_1 + 1} ~d\sigma_1 \\ \ecart
&& \dis + \frac{\lambda_2^{-\varepsilon + ir}}{8\pi\xi}\int_0^{+\infty}  \frac{\sigma_2^{-\varepsilon + ir}}{\sigma_2 + 1} ~d\sigma_2. 
\end{array}$$
Moreover, for all $z \in \CC \setminus \NN^-$, where $\NN^-$ is the set of negative integer, we have
\begin{equation}\label{int=Gamma}
\int_0^{+\infty}  \frac{\sigma^{-z}}{\sigma + 1} ~d\sigma = \Gamma(z)\Gamma(1-z),
\end{equation}
thus, it follows that
\begin{equation}\label{int=Gamma2}
\int_0^{+\infty}  \frac{\sigma^{-\varepsilon + ir}}{\sigma + 1} ~d\sigma = \Gamma(\varepsilon - ir)\Gamma(1-\varepsilon + ir) = \Gamma_{\varepsilon,r},
\end{equation}
hence
$$\begin{array}{lll}
\dis\frac{1}{\Gamma_{\varepsilon,r}}\int_0^{+\infty} \frac{\lambda^{-\varepsilon + ir}}{4\alpha_1\alpha_2} \F\left(E_{\alpha_2}\right)(\xi) \F\left(E_{\alpha_1}\right)(\xi)~ d\lambda & = & \dis\frac{1}{8\pi\xi \,\Gamma_{\varepsilon,r}}\left(\lambda_2^{-\varepsilon + ir} - \lambda_1^{-\varepsilon + ir}\right) \Gamma_{\varepsilon,r} \\ \ecart

& = & \dis\frac{1}{8\pi\xi}\left(\lambda_2^{-\varepsilon + ir} - \lambda_1^{-\varepsilon + ir}\right).
\end{array}$$
In the same way, we have
$$\begin{array}{lll}
\dis \frac{\lambda^{-\varepsilon + ir}(\lambda+1)}{4\alpha_1\alpha_2} \F\left(E_{\alpha_2}\right)(\xi) \F\left(E_{\alpha_1}\right)(\xi) & = & \dis \frac{1}{\lambda_1 - \lambda_2} \left(-\frac{(\lambda + 1)\lambda^{-\varepsilon+ir}}{\lambda + \lambda_1} + \frac{(\lambda + 1)\lambda^{-\varepsilon+ir}}{\lambda + \lambda_2}\right) \\ \\

& = & \dis  \frac{1}{8\pi \xi} \left(-\frac{\lambda^{1-\varepsilon+ir}}{\lambda + \lambda_1} + \frac{\lambda^{1-\varepsilon+ir}}{\lambda + \lambda_2}\right) \\ \ecart
&&\dis  + \frac{1}{8\pi\xi} \left(-\frac{\lambda^{-\varepsilon+ir}}{\lambda + \lambda_1} + \frac{\lambda^{-\varepsilon+ir}}{\lambda + \lambda_2}\right).
\end{array}$$
Then, setting 
$$\sigma_1 = \frac{\lambda}{\lambda_1} \quad \text{and} \quad \sigma_2 = \frac{\lambda}{\lambda_2},$$
we obtain
$$\begin{array}{lll}
\Upsilon & = & \dis\int_0^{+\infty} \frac{\lambda^{-\varepsilon + ir}(\lambda+1)}{4\alpha_1\alpha_2} \F\left(E_{\alpha_2}\right)(\xi) \F\left(E_{\alpha_1}\right)(\xi)~ d\lambda \\ \\

& = & \dis -\frac{1}{8\pi\xi}\int_0^{+\infty}  \frac{\lambda^{-\varepsilon + ir}}{\lambda + \lambda_1} ~d\lambda + \frac{1}{8\pi\xi}\int_0^{+\infty}  \frac{\lambda^{-\varepsilon + ir}}{\lambda + \lambda_2} ~d\lambda \\ \ecart
&& \dis - \frac{1}{8\pi\xi}\int_0^{+\infty}  \frac{\lambda^{1-\varepsilon + ir}}{\lambda + \lambda_1} ~d\lambda  + \frac{1}{8\pi\xi}\int_0^{+\infty}  \frac{\lambda^{1-\varepsilon + ir}}{\lambda + \lambda_2} ~d\lambda \\ \\

& = & \dis -\frac{\lambda_1^{-\varepsilon + ir}}{8\pi\xi} \int_0^{+\infty}  \frac{\sigma_1^{-\varepsilon + ir}}{\sigma_1 + 1} ~d\sigma_1 + \frac{\lambda_2^{-\varepsilon + ir}}{8\pi\xi}\int_0^{+\infty}  \frac{\sigma_2^{-\varepsilon + ir}}{\sigma_2 + 1} ~d\sigma_2 \\ \ecart
&& \dis -\frac{\lambda_1^{1-\varepsilon + ir}}{8\pi\xi} \int_0^{+\infty}  \frac{\sigma_1^{1-\varepsilon + ir}}{\sigma_1 + 1} ~d\sigma_1 + \frac{\lambda_2^{1-\varepsilon + ir}}{8\pi\xi}\int_0^{+\infty}  \frac{\sigma_2^{1-\varepsilon + ir}}{\sigma_2 + 1} ~d\sigma_2.
\end{array}$$
Moreover, from \eqref{int=Gamma} and \eqref{int=Gamma2}, we deduce that
$$\begin{array}{lll}
\Upsilon & = & \dis \left(\frac{\lambda_2^{-\varepsilon + ir} - \lambda_1^{-\varepsilon + ir}}{8\pi\xi}\right)\Gamma_{\varepsilon,r} + \left(\frac{\lambda_2^{-\varepsilon + ir} - \lambda_1^{-\varepsilon + ir}}{8\pi\xi}\right) \Gamma(\varepsilon - ir - 1)\Gamma(1-(\varepsilon - ir -1)) \\ \\

& = & \dis \left(\frac{\lambda_2^{-\varepsilon + ir} - \lambda_1^{-\varepsilon + ir}}{2\pi\xi} \right) \left(\Gamma_{\varepsilon,r} + \Gamma(\varepsilon - ir - 1)\Gamma(1-(\varepsilon - ir -1))\right).
\end{array}$$
For all $z \in \CC \setminus \ZZ$, we have
$$\Gamma(z - 1)\Gamma(1-(z - 1)) = \frac{\pi}{\sin(\pi(z-1))} = -\frac{\pi}{\sin(\pi z)} = -\Gamma(z)\Gamma(1-z),$$
Setting $z=\varepsilon - ir$, with $\varepsilon \in (0,1)$, it follows that
$$\Gamma(\varepsilon - ir - 1)\Gamma(1-(\varepsilon - ir -1) = - \Gamma(\varepsilon - ir)\Gamma(1 - \varepsilon + ir) = - \Gamma_{\varepsilon,r},$$
hence $\Upsilon = 0$. Finally, we obtain that
$$I_{\varepsilon,r} (\theta) = \F^{-1}\left(m_\varepsilon (\xi)\F\left(\widetilde{G_0}\right)(\xi)\right)(\theta),$$
where
$$m_\varepsilon (\xi) = \frac{\lambda_2^{-\varepsilon + ir} - \lambda_1^{-\varepsilon + ir}}{8\pi\xi}.$$
Setting
$$m(\xi) := \lim_{\varepsilon \to 0} m_\varepsilon (\xi)  = \frac{\lambda_2^{ir} - \lambda_1^{ir}}{8\pi\xi},$$
due to the Lebesgue's dominated convergence Theorem, it follows that
$$I_{0,r} (\theta) := \lim_{\varepsilon \to 0} I_{\varepsilon,r} (\theta)  = \F^{-1}\left(m (\xi)\F\left(\widetilde{G_0}\right)(\xi)\right)(\theta).$$
Moreover, for all $x_1,x_2 \in \RR$, we have 
$$\left|e^{ix_1} - e^{ix_2}\right| \leqslant |x_1 - x_2|,$$
then, for all $\xi \in \RR \setminus\{0\}$, we deduce that
$$|m(\xi)| = \frac{\left|\lambda_2^{ir} - \lambda_1^{i r}\right|}{8\pi |\xi|} = \frac{\left|e^{ir\ln(\lambda_2)} - e^{i r \ln(\lambda_1)}\right|}{8\pi |\xi|} \leqslant \frac{|r| \left|\ln(\lambda_2) - \ln(\lambda_1)\right|}{8\pi|\xi|}.$$ 
Thus
$$\sup_{\xi \in \RR}\left|m (\xi) \right| = \lim_{\xi \to 0} \left|m (\xi) \right| = \left| \lim_{\xi \to 0} \frac{\lambda_2^{ir} - \lambda_1^{ir}}{8\pi\xi}\right|,$$
and 
$$\begin{array}{lll}
\dis\lim_{\xi \to 0} \frac{\lambda_2^{ir} - \lambda_1^{ir}}{8\pi\xi} & = & \dis \lim_{\xi \to 0} 2^{ir}\left(\frac{1 + ir\left(-2 \pi\xi + 2 \pi^2 \xi^2\right) + 2 ir(ir - 1)\pi^2\xi^2 + o(\xi^2)}{8\pi\xi}\right) \\ \ecart

&& \dis -  \lim_{\xi \to 0} 2^{ir}\left(\frac{1 + ir\left(2\pi\xi + 2 \pi^2\xi^2\right) + 2 ir(ir - 1)\pi^2\xi^2 + o(\xi^2)}{8\pi\xi}\right) \\ \\

& = & \dis \lim_{\xi \to 0} 2^{ir}\left( \frac{-4 ir\pi\xi + o(\xi^2)}{8\pi\xi} \right)\\ \\

& = & \dis -2^{ir-1}ir.
\end{array}$$
Then
$$\sup_{\xi \in \RR}\left|m (\xi) \right| = \frac{|r|}{2}.$$
Since we have
$$\begin{array}{lll}
\dis\xi \,m'(\xi) & = & \dis \frac{2\pi\xi^2\left(ir \lambda_2^{ir-1} (-4\pi + 8 \pi^2\xi) - ir \lambda_1^{ir - 1}(4\pi + 8\pi^2\xi)\right) - 2\pi \xi \left(\lambda_2^{ir} - \lambda_1^{ir}\right)}{64\pi^2\xi^2} \\ \ecart

& = & \dis \frac{ir}{32}\left( \lambda_2^{ir-1} (-1 + 2 \pi\xi) - \lambda_1^{ir - 1}(1 + 2\pi\xi)\right) - \frac{\lambda_2^{ir} - \lambda_1^{ir}}{32\pi\xi},
\end{array}$$
we obtain in the same way that
$$\sup_{\xi \in \RR}\left|\xi \, m'(\xi) \right| = \lim_{\xi \to 0} \left|\xi \,m'(\xi)\right| =  \left|\lim_{\xi \to 0}\xi \, m'(\xi)\right|,$$
with
$$\lim_{\xi \to 0} \xi \, m'(\xi) = \frac{ir}{32}\lim_{\xi \to 0} \left( \lambda_2^{ir-1} (-1 + 2 \pi\xi) - \lambda_1^{ir - 1}(1 + 2\pi\xi)\right) - \frac{\lambda_2^{ir} - \lambda_1^{ir}}{32\pi\xi},$$
where
$$\lim_{\xi \to 0} \lambda_2^{ir-1} = \lim_{\xi \to 0} 2^{ir-1}\left(1 + (ir - 1)\left(-2 \pi\xi + 2 \pi^2 \xi^2\right) + 4  ir(ir - 1)\pi^2\xi^2 + o(\xi^2)\right) = 2^{ir-1},$$
and
$$\lim_{\xi \to 0} \lambda_1^{ir-1} = \lim_{\xi \to 0} 2^{ir-1}\left(1 + (ir - 1)\left(2 \pi\xi + 2 \pi^2 \xi^2\right) + 4 ir(ir - 1)\pi^2\xi^2 + o(\xi^2)\right) = 2^{ir-1}.$$
Thus, we obtain
$$\begin{array}{lll}
\dis \lim_{\xi \to 0} \xi \, m'(\xi) & = & \dis \frac{1}{4} \lim_{\xi \to 0} \frac{ir}{8}\left(-\left(\lambda_2^{ir-1} + \lambda_1^{ir - 1}\right) + 2 \pi \xi \left( \lambda_2^{ir-1} - \lambda_1^{ir - 1}\right)\right) - \frac{\lambda_2^{ir} - \lambda_1^{ir}}{8\pi\xi} \\ \ecart

& = & \dis \frac{1}{4}\left(- \frac{2^{ir}ir}{8} + 2^{ir-1}ir\right) \\ \ecart 

& = & \dis 3\times 2^{ir-5}ir.
\end{array}$$
Then
$$\sup_{\xi \in \RR}\left|\xi \,m'(\xi) \right| = \left|3\times 2^{ir-5}ir \right| = \frac{3}{32}|r|.$$
Therefore, we deduce that
$$\sup_{\xi \in \RR}\left|m (\xi) \right| + \sup_{\xi \in \RR}\left|\xi \,m'(\xi) \right| = \frac{|r|}{2} + \frac{3}{32}|r| = \frac{19}{32} |r|,$$
From the Mihlin Theorem, see \cite{mihlin}, for all $\gamma > 0$, there exists $C_{\gamma,p} >0$, such that, for all $r \in \RR$, we have
$$\left\|I_{0,r} (.)\right\|_{\L(X)} = \left\|F^{-1}\left(m (\xi) F(\widetilde{G_0})(\xi) \right)(.)\right\|_{\L(X)} \leqslant C_{\gamma,p} e^{\gamma |r|}.$$
We can treat the others terms using \cite{seeley} and obtain similar results. Finally, for all $\gamma > 0$, there exists a constant $C_{\gamma,p} > 0$ such that for all $r \in \RR$, we obtain
$$\left\|\left(\A + I\right)^{ir}\right\|_{\L(X)} \leqslant C_{\gamma,p} \,e^{\gamma |r|}.$$
Therefore, taking $\theta_\A = \gamma > 0$, we obtain that $\A + I \in $ BIP\,$(X,\theta_\A)$. Finally, using Theorem 2.3, p. 69 in \cite{arendt-bu-haase}, we deduce that $\A = \A+I-I \in $ BIP\,$(X,\theta_\A)$.
\end{proof}

\section{Study of the sum $\L_1+\L_2$} \label{Sect study of L1+L2}

\subsection{Invertibility of the closure of the sum}\label{sect inv sum}

In this section, we will apply the results described in \refS{sect M1 M2}. We take $\L_1 = \M_1$ and $\L_2 = \M_2$. 

\begin{Th}\label{Th L1+L2 barre inversible}
Assume that \eqref{hyp inv sum} holds. Then $\L_1 + \L_2$ is closable and its closure $\overline{\L_1 + \L_2}$ is invertible. 
\end{Th}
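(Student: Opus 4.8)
The strategy is to apply the Da Prato--Grisvard theorem, \refT{Th daprato-grisvard}, to the pair $\M_1 = \L_1$ and $\M_2 = \L_2$ in the space $\E = L^p(0,+\infty;X)$; it then suffices to check the three hypotheses $(H_1)$, $(H_2)$ and $(H_3)$ of \refS{sect M1 M2}, and to conclude. Hypotheses $(H_1)$ and $(H_3)$ are essentially already at hand from the spectral study carried out above.

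For $(H_1)$: \refP{Prop L1} furnishes, with $\theta_{\L_1} = 2\varepsilon_{\L_1}$ by \eqref{theta L1}, the resolvent of $\L_1$ on the sector $\{|z|\geqslant 4\nu^2/\sin^2(\varepsilon_{\L_1}),\ |\arg z| \leqslant \pi - 2\varepsilon_{\L_1}\}$ with the bound $M_{\L_1}/|z|$; \refC{Cor L2} furnishes, with $\theta_{\L_2} = \pi - \varepsilon_{\L_2}$ by \eqref{theta L2}, the resolvent of $\L_2$ on $\{|\arg z| \leqslant \varepsilon_{\L_2}\}$ (and on a ball around $0$) with the bound $M_{\L_2}/(1+|z|)$. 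Since the parameter $\varepsilon_{\L_1}$ in \eqref{lambda estimation resolvante} is free, we fix it small enough that $2\varepsilon_{\L_1} < \varepsilon_{\L_2}$; then $\theta_{\L_1}+\theta_{\L_2} = 2\varepsilon_{\L_1} + \pi - \varepsilon_{\L_2} < \pi$, and $(H_1)$ holds with $R = 4\nu^2/\sin^2(\varepsilon_{\L_1})$ and a suitable $C > 0$. For $(H_3)$: the resolvent $(\L_1 - \lambda I)^{-1}$ is the integral operator in the variable $t$ given by \eqref{Rep V}, whose kernel is scalar and hence commutes with the pointwise (in $t$) action of $(\L_2 - \mu I)^{-1} = -(\A + \mu I)^{-1}$, the operator $\A$ being independent of $t$.

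The real point is $(H_2)$. Since $-\L_2 = \A$, we must prove $\sigma(\L_1) \cap \sigma(\A) = \emptyset$, and by \refP{Prop L1} we have $\Sigma_\nu \subset \rho(\L_1)$, so it is enough to show $\sigma(\A) \subset \Sigma_\nu$. By \refP{Prop A inversible}, $0 \in \rho(\A)$, and by \refR{Rem valeurs propres L2} the spectrum of $\A$ reduces to the eigenvalues $\lambda_j = -z_j^2/\omega^2$, $j \geqslant 1$, where $z_j \in \CC_+$ runs over the roots $(z_j)_{j\geqslant 1}$ introduced in \refS{Sect Intro}. Since $\text{Re}(z_j) > 0$, the two square roots of $-z_j^2$ are $\pm i z_j$, and the one with nonnegative real part has real part $|\text{Im}(z_j)|$; consequently
$$\text{Re}\left(\sqrt{\lambda_j}\right) = \frac{|\text{Im}(z_j)|}{\omega} \geqslant \frac{\tau}{\omega}.$$
Moreover $\text{Im}(\lambda_j) = -2\,\text{Re}(z_j)\,\text{Im}(z_j)/\omega^2 \neq 0$, so $\lambda_j \in \CC \setminus \RR_-$. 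Thanks to the standing assumption \eqref{hyp inv sum}, $\tau/\omega > \nu$, hence $\text{Re}(\sqrt{\lambda_j}) > \nu$, i.e. $\lambda_j \in \Sigma_\nu$ for every $j$. Therefore $\sigma(\A) \subset \Sigma_\nu \subset \rho(\L_1)$, which is precisely $(H_2)$.

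With $(H_1)$, $(H_2)$ and $(H_3)$ established, \refT{Th daprato-grisvard} applies and gives that $\L_1 + \L_2$ is closable and that its closure $\overline{\L_1+\L_2}$ is boundedly invertible, which is the asserted statement. The delicate step is $(H_2)$: it hinges on the precise localization of $\sigma(\A)$ via the transcendental roots recalled in \refS{Sect Intro}, and it is exactly there that the hypothesis $\omega\nu < \tau$ is consumed, since the membership $\lambda_j \in \Sigma_\nu$ translates verbatim into $\tau/\omega > \nu$.
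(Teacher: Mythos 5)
Your proposal is correct and follows essentially the same route as the paper: verification of $(H_1)$, $(H_2)$ and $(H_3)$ for $\M_1=\L_1$, $\M_2=\L_2$ followed by an application of \refT{Th daprato-grisvard}, with $(H_2)$ reduced to the localization of the eigenvalues $\lambda_j=-z_j^2/\omega^2$ via $\mathrm{Re}(\sqrt{\lambda_j})=|\mathrm{Im}(z_j)|/\omega\geqslant\tau/\omega>\nu$ under \eqref{hyp inv sum}. Your bookkeeping of the angles ($2\varepsilon_{\L_1}<\varepsilon_{\L_2}$, consistent with \eqref{theta L1}) is in fact slightly more careful than the paper's, which writes $\varepsilon_{\L_1}+\pi-\varepsilon_{\L_2}$.
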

\begin{proof}
Assumption $(H_1)$ is satisfied from \refP{Prop L1} and \refC{Cor L2}, with 
$$\theta_{\M_1} + \theta_{\M_2} = \varepsilon_{\L_1} + \pi - \varepsilon_{\L_2},$$
where it suffices to take $\varepsilon_{\L_2} > \varepsilon_{\L_1}$ in order to obtain $\theta_{\M_1} + \theta_{\M_2} < \pi$. 

For assumption $(H_2)$, due to \refP{Prop A inversible}, it follows that $0 \notin \sigma(\L_1) \cap \sigma(-\L_2)$. Moreover, from \refP{Prop L1}, we have
$$\sigma(\L_1) = \{\lambda \in \CC : |\arg(\lambda)| < \pi \quad \text{and} \quad \text{Re}(\sqrt{\lambda}) \leqslant \nu\},$$
and from \refR{Rem valeurs propres L2}, it follows that
$$\sigma(-\L_2) = \{\lambda \in \CC\setminus \RR_+ : \sinh(\omega\sqrt{-\lambda}) = \pm \omega\sqrt{-\lambda}\} = \left\{-\frac{z_j^2}{\omega^2} \in \CC\setminus\RR_+: j \in \NN\setminus\{0\}\right\}.$$
Then, since
$$\text{Re}\left(\sqrt{-\frac{z_j^2}{\omega^2}}\right) = \frac{1}{\omega} \left|\text{Im}(z_j)\right|,$$
the condition $\sigma(\L_1) \cap \sigma(-\L_2) = \emptyset$ is fulfilled if \eqref{hyp inv sum} holds. 

The commutativity assumption $(H_3)$ is clearly verified since the actions of operators $\L_1$ and $\L_2$ are independent. 

Now, applying \refT{Th daprato-grisvard}, we obtain the result. 
\end{proof}

\begin{Rem}
We can conjecture that, for the critical case $\nu = \tau$, the sum $\L_1 + \L_2$ is not closable.
\end{Rem}

\subsection{Convexity inequalities}

In view to apply \refC{Cor inv fermeture somme}, we are going to verify inequality \eqref{ineg norme w} in two situations.

\begin{Prop}
Let
\begin{equation*}
\mathcal{E}_1=W^{1,p}(0,+\infty ;X)\subset \mathcal{E=}L^{p}(0,+\infty
;X),
\end{equation*}
and 
$$\E_{2} = L^{p}\left( 0,+\infty ;\left[ W^{3,p}(0,\omega)\cap W_{0}^{2,p}(0,\omega)\right] \times L^p(0,\omega)\right) \subset \mathcal{E}.$$
Then, we have
\begin{equation}\label{L1+L2 dans E1 inter E2}
D\left(\overline{\L_1 + \L_2}\right)\subset \E_{1} \cap \E_2.
\end{equation}
\end{Prop}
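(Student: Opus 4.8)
The plan is to deduce \eqref{L1+L2 dans E1 inter E2} from \refC{Cor inv fermeture somme} applied to the pair $(\M_1,\M_2)=(\L_1,\L_2)$. Hypotheses $(H_1)$, $(H_2)$, $(H_3)$ have already been checked in the proof of \refT{Th L1+L2 barre inversible}, so it remains to verify, for $i=1$ with $\E_1=W^{1,p}(0,+\infty;X)$ and for $i=2$ with $\E_2=L^{p}(0,+\infty;[W^{3,p}(0,\omega)\cap W_0^{2,p}(0,\omega)]\times L^p(0,\omega))$, that $D(\M_i)\hookrightarrow \E_i\hookrightarrow \E$ and that the convexity inequality \eqref{ineg norme w} holds for some $\delta\in(0,1)$; I will obtain $\delta=1/2$ in both cases. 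Once this is done, \refC{Cor inv fermeture somme} gives $D(\overline{\L_1+\L_2})\subset \E_1$ and $D(\overline{\L_1+\L_2})\subset \E_2$, whence the claim.

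For $i=1$, I would first record the elementary vector-valued interpolation inequality
$$\|V'\|_{\E}\ \leqslant\ 2\sqrt{2}\,\|V\|_{\E}^{1/2}\,\|V''\|_{\E}^{1/2},\qquad V\in W^{2,p}(0,+\infty;X),$$
obtained, for any $h>0$, from the Taylor identity $V'(t)=\frac{1}{h}\left(V(t+h)-V(t)\right)-\frac{1}{h}\int_t^{t+h}(t+h-s)V''(s)\,ds$, which yields $\|V'\|_{\E}\leqslant \frac{2}{h}\|V\|_{\E}+h\|V''\|_{\E}$, followed by optimization in $h$. Since $\L_1V=V''-2\nu V'+\nu^2V$, one has $\|V''\|_{\E}\leqslant \|\L_1V\|_{\E}+2\nu\|V'\|_{\E}+\nu^2\|V\|_{\E}$; combining the two estimates and absorbing the term in $\|V'\|_{\E}$ by Young's inequality gives, on the one hand, that the graph norm of $\L_1$ is equivalent to the $W^{2,p}(0,+\infty;X)$-norm on $D(\L_1)$, hence $D(\L_1)\hookrightarrow W^{1,p}(0,+\infty;X)=\E_1\hookrightarrow\E$, and on the other hand the bound $\|V'\|_{\E}\leqslant C\left(\|V\|_{\E}+\|V\|_{\E}^{1/2}\|\L_1V\|_{\E}^{1/2}\right)$, which is exactly \eqref{ineg norme w} for $\M_1=\L_1$ with $\delta=1/2$.

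For $i=2$ the crucial point is a pointwise-in-$t$ estimate in $X$. Writing $Y=[W^{3,p}(0,\omega)\cap W_0^{2,p}(0,\omega)]\times L^p(0,\omega)$, so that $\E_2=L^p(0,+\infty;Y)$, I would establish that for every $\Psi=(\psi_1,\psi_2)\in D(\A)$,
$$\|\Psi\|_{Y}\ \leqslant\ C\left(\|\Psi\|_{X}+\|\Psi\|_{X}^{1/2}\,\|\A\Psi\|_{X}^{1/2}\right).$$
The first ingredient is that the graph norm of $\A$ is equivalent on $D(\A)$ to the natural norm $\|\psi_1\|_{W^{4,p}(0,\omega)}+\|\psi_2\|_{W^{2,p}(0,\omega)}$: one inequality is immediate from the definition of $\A$, and for the other one reads $\|\psi_2\|_{W^{2,p}}$ off the first component of $\A\Psi$, then deduces $\|(\partial_\theta^2+1)^2\psi_1\|_{L^p}$ from the second component, and finally controls $\|\psi_1\|_{W^{4,p}}$ by the invertibility of $(\partial_\theta^2+1)^2$ with the boundary conditions $\psi_1(0)=\psi_1(\omega)=\psi_1'(0)=\psi_1'(\omega)=0$ (as already used, via \cite{thorel}, in the proof of \refP{Prop A inversible}; compare also \refP{Prop estim resolvante A}). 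The second ingredient is $\|\psi_2\|_{L^p}\leqslant\|\psi_2\|_{W^{2,p}}$ together with the Gagliardo–Nirenberg interpolation inequality on $(0,\omega)$, $\|\psi_1\|_{W^{3,p}}\leqslant C\|\psi_1\|_{W^{2,p}}^{1/2}\|\psi_1\|_{W^{4,p}}^{1/2}$ (possible lower-order correction terms being harmless since $\|\psi_1\|_{W^{2,p}}\leqslant\|\Psi\|_{X}$), from which the displayed estimate follows; in particular $D(\L_2)\hookrightarrow\E_2\hookrightarrow\E$. To pass to \eqref{ineg norme w}, I raise this pointwise estimate to the power $p$, integrate over $t\in(0,+\infty)$, and apply Hölder's inequality with conjugate exponents $2$ and $2$ to the cross term, which gives $\|V\|_{\E_2}\leqslant C\left(\|V\|_{\E}+\|V\|_{\E}^{1/2}\|\L_2V\|_{\E}^{1/2}\right)$, i.e.\ \eqref{ineg norme w} for $\M_2=\L_2$ with $\delta=1/2$. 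Applying \refC{Cor inv fermeture somme} twice then yields \eqref{L1+L2 dans E1 inter E2}. The step I expect to be the most delicate is the equivalence between the graph norm of $\A$ and the product Sobolev norm on $D(\A)$ — that is, the elliptic a priori estimate for $\psi_1$ — but it is essentially contained in \refP{Prop A inversible} and \refP{Prop estim resolvante A}; the remaining ingredients (the half-line interpolation inequality, Gagliardo–Nirenberg on $(0,\omega)$, and the Hölder step) are routine.
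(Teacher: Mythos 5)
Your proposal is correct and follows essentially the same route as the paper: verify the convexity inequality \eqref{ineg norme w} with $\delta=1/2$ separately for $\L_1$ (via the half-line interpolation inequality $\|V'\|_{\E}\lesssim\|V\|_{\E}^{1/2}\|V''\|_{\E}^{1/2}$ and an a priori bound for $\|V''\|_{\E}$ in terms of $\|\L_1V\|_{\E}$ and $\|V\|_{\E}$) and for $\L_2$ (reducing to the spatial operator $\A$, interpolating $\psi_1'''$ between $\psi_1''$ and $\psi_1^{(4)}$, and controlling $\psi_1^{(4)}$ by $\|\A\Psi\|_X$ through the elliptic estimate underlying \refP{Prop A inversible}), then invoke \refC{Cor inv fermeture somme} twice. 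The only differences are cosmetic: you absorb the $\|V'\|_{\E}$ term by Young's inequality where the paper uses the resolvent at a point $\lambda_0\in\rho(\L_1)$, and you are slightly more explicit about the integration-in-$t$/H\"older step that upgrades the pointwise estimate for $\A$ to the $L^p(0,+\infty;X)$ estimate for $\L_2$.
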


\begin{proof}
Let $V\in D(\mathcal{L}_{1})$. We must prove that there exists $\delta \in (0,1)$
such that 
\begin{equation*}
\left\Vert V\right\Vert _{\mathcal{E}_1}\leqslant C\left[ \left\Vert
V\right\Vert _{\mathcal{E}}+\left\Vert V\right\Vert _{\mathcal{E}}^{1-\delta
}\left\Vert \mathcal{L}_{1}(V)\right\Vert _{\mathcal{E}}^{\delta }\right].
\end{equation*}
For all $V\in W^{2,p}(0,+\infty ;X)$, from \cite{kato}, inequality (1.15), p. 192, we have the convexity inequality
\begin{equation*}
\|V'\|_{\E} \leqslant 2\sqrt{2} \|V\|_{\E}^{1/2} \|V''\|_{\E}^{1/2}.
\end{equation*}
Thus, we deduce
$$\|V\|_{\E_1} = \|V\|_\E + \|V'\|_\E \leqslant \|V\|_\E +  2\sqrt{2} \|V\|_{\E}^{1/2} \|V''\|_{\E}^{1/2}.$$
Since $\L_1$ is not invertible, we are going to estimate $\|V''\|_\E$ by $\|\L_1(V) - \lambda_0 V\|_\E$, where $\lambda_0 \in \rho(\L_1)$. We have 
$$V''-2\nu V'+(\nu^{2} - \lambda_0)V = \L_1(V) - \lambda_0 V.$$
Then, there exists a constant $C > 0$ such that
$$\|V''\|_\E + \|V'\|_\E + \|V\|_\E \leqslant C\|\L_1(V) - \lambda_0 V\|_\E,$$
hence
$$\|V''\|_\E \leqslant C \|\L_1(V) - \lambda_0 V\|_\E \leqslant C\|\L_1(V)\|_\E + |\lambda_0| C \|V\|_\E.$$
Thus, we deduce
$$\begin{array}{lll}
\dis \|V\|_{\E_1} = \|V\|_\E + \|V'\|_\E & \leqslant & \dis \|V\|_\E +  2\sqrt{2}\, \|V\|_{\E}^{1/2} \|V''\|_{\E}^{1/2} \\ \ecart
& \leqslant & \dis \dis \|V\|_\E +  2\sqrt{2C}\, \|V\|_{\E}^{1/2} \left(\|\L_1(V)\|_\E + |\lambda_0| \|V\|_\E\right)^{1/2} \\ \ecart
& \leqslant & \dis \dis \|V\|_\E +  2\sqrt{2C}\, \|V\|_{\E}^{1/2} \left(\|\L_1(V)\|_\E^{1/2} + |\lambda_0|^{1/2} \|V\|_\E^{1/2}\right) \\ \ecart
& \leqslant & \dis \dis \left(1 + 2\sqrt{2C}\,|\lambda_0|^{1/2} \right) \|V\|_\E +  2\sqrt{2C}\, \|V\|_{\E}^{1/2} \|\L_1(V)\|_\E^{1/2}.
\end{array}$$
Therefore, inequality \eqref{ineg norme w} is satisfied for $\delta = 1/2$ and $\M_1 = \L_1$. Using \refC{Cor inv fermeture somme}, we obtain 
\begin{equation*}
D\left(\overline{\L_1 + \L_2}\right)\subset \E_{1}.
\end{equation*}
Now, we must show that, for all $V\in D(\mathcal{L}_{2})$, we have
\begin{equation*}
\|V\|_{\E_2}\leqslant C \left[\|V\|_{\E} + \|V\|_{\E}^{1/2} \|\L_{2}(V)\|_{\E}^{1/2}\right].
\end{equation*}
To this end, it suffices to do it for $\mathcal{A}$. Set
\begin{equation*}
\mathcal{G}_{1}=\left[ W^{3,p}(0,\omega)\cap W_{0}^{2,p}(0,\omega)\right] \times L^p(0,\omega)\subset X.
\end{equation*}
We must prove that 
\begin{equation*}
\forall \left( 
\begin{array}{c}
\psi _{1} \\ 
\psi _{2}%
\end{array}%
\right) \in D(\mathcal{A}), \quad \left\Vert \left( 
\begin{array}{c}
\psi _{1} \\ 
\psi _{2}
\end{array}
\right) \right\Vert _{\mathcal{G}_{1}}\leqslant C\left[ \left\Vert \left( 
\begin{array}{c}
\psi _{1} \\ 
\psi _{2}
\end{array}
\right) \right\Vert _{X}+\left\Vert \left( 
\begin{array}{c}
\psi _{1} \\ 
\psi _{2}
\end{array}
\right) \right\Vert _{X}^{1/2}\left\Vert \mathcal{A}\left( 
\begin{array}{c}
\psi _{1} \\ 
\psi _{2}
\end{array}\right) \right\Vert _{X}^{1/2}\right].
\end{equation*}
Here, we have
$$\begin{array}{lll}
\left\|\left( \begin{array}{c}
\psi _{1} \\ 
\psi _{2}
\end{array}\right) \right\| _{\mathcal{G}_{1}} &=& \left\|\psi_1\right\|_{W^{3,p}(0,\omega)} + \left\|\psi_{2}\right\|_{L^p(0,\omega)} \\ \ecart
&=& \left\|\psi_1\right\|_{L^p(0,\omega)} + \left\|\psi'_1\right\|_{L^p(0,\omega)} + \left\|\psi''_1\right\|_{L^p(0,\omega)} + \left\|\psi'''_1\right\|_{L^p(0,\omega)} + \left\|\psi_2\right\|_{L^p(0,\omega)}.
\end{array}$$
Set $\varphi = \psi_1''$. Then, for all $\eta > 0$, from \cite{kato}, inequality (1.12), p. 192, taking $n=\eta+1$ and $b-a = \omega$, we obtain 
$$\|\varphi'\|_{L^p(0,\omega)} \leqslant \frac{\omega}{\eta} \|\varphi''\|_{L^p(0,\omega)} + \frac{2}{\omega} \left(\eta + 3 + \frac{2}{\eta}\right)\|\varphi\|_{L^p(0,\omega)}.$$
It is not difficult to see that the second member is minimal when 
$$\eta = \frac{\sqrt{2}}{2} \frac{\left(\|\varphi''\|_{L^p(0,\omega)} + 4 \|\varphi\|_{L^p(0,\omega)}\right)^{1/2}}{\|\varphi\|_{L^p(0,\omega)}^{1/2}}.$$
Therefore, we deduce that
$$\begin{array}{lll}
\|\varphi'\|_{L^p(0,\omega)} & \leqslant & \dis \frac{\omega \sqrt{2}\,\|\varphi\|_{L^p(0,\omega)}^{1/2}\|\varphi''\|_{L^p(0,\omega)}}{\left(\|\varphi''\|_{L^p(0,\omega)} + 4 \|\varphi\|_{L^p(0,\omega)}\right)^{1/2}} + \frac{4}{\omega} \frac{\sqrt{2}\,\|\varphi\|_{L^p(0,\omega)}^{1/2}\|\varphi\|_{L^p(0,\omega)} }{\left(\|\varphi''\|_{L^p(0,\omega)} + 4 \|\varphi\|_{L^p(0,\omega)}\right)^{1/2}} \\ \ecart 

&&\dis + \frac{\sqrt{2}}{\omega} \frac{\left(\|\varphi''\|_{L^p(0,\omega)} + 4 \|\varphi\|_{L^p(0,\omega)}\right)^{1/2}\|\varphi\|_{L^p(0,\omega)} }{\|\varphi\|_{L^p(0,\omega)}^{1/2}} + \frac{6}{\omega} \|\varphi\|_{L^p(0,\omega)} \\ \\

& \leqslant & \dis \frac{\sqrt{2}}{\omega} \left(\|\varphi''\|_{L^p(0,\omega)} + 4 \|\varphi\|_{L^p(0,\omega)}\right)^{1/2} \|\varphi\|_{L^p(0,\omega)}^{1/2} + \frac{6}{\omega} \|\varphi\|_{L^p(0,\omega)} \\ \ecart

&& \dis + \left(\frac{4}{\omega}\|\varphi\|_{L^p(0,\omega)} + \omega \|\varphi''\|_{L^p(0,\omega)}\right) \frac{\sqrt{2}\,\|\varphi\|_{L^p(0,\omega)}^{1/2}}{\left(\|\varphi''\|_{L^p(0,\omega)} + 4 \|\varphi\|_{L^p(0,\omega)}\right)^{1/2}} \\ \\

& \leqslant & \dis C_\omega \left(\|\varphi\|_{L^p(0,\omega)} + \|\varphi\|_{L^p(0,\omega)}^{1/2} \|\varphi''\|_{L^p(0,\omega)}^{1/2}\right).
\end{array}$$
Then, we have
$$\|\psi_1'''\|_{L^p(0,\omega)} \leqslant C_\omega \left(\|\psi_1''\|_{L^p(0,\omega)} + \|\psi_1''\|_{L^p(0,\omega)}^{1/2} \|\psi_1^{(4)}\|_{L^p(0,\omega)}^{1/2}\right).$$
Hence
$$\begin{array}{lll}
\left\|\left( \begin{array}{c}
\psi _{1} \\ 
\psi _{2}
\end{array}\right) \right\| _{\mathcal{G}_{1}} & \leqslant & \left\|\psi_1\right\|_{L^p(0,\omega)} + \left\|\psi'_1\right\|_{L^p(0,\omega)} + \left\|\psi''_1\right\|_{L^p(0,\omega)} \\ \ecart
&& \dis + C_\omega \left(\|\psi_1''\|_{L^p(0,\omega)} + \|\psi_1''\|_{L^p(0,\omega)}^{1/2} \|\psi_1^{(4)}\|_{L^p(0,\omega)}^{1/2}\right) + \left\|\psi_2\right\|_{L^p(0,\omega)} \\ \\

& \leqslant & \left(1 + C_\omega\right) \left\|\psi_1\right\|_{W^{2,p}_0(0,\omega)} + C_\omega  \|\psi_1\|_{W^{2,p}_0(0,\omega)}^{1/2} \|\psi_1^{(4)}\|_{L^p(0,\omega)}^{1/2} + \left\|\psi_2\right\|_{L^p(0,\omega)}
\end{array}$$
Now, in virtue of the invertibility of $\A$, see \refP{Prop A inversible}, we have proved that there exists a constant $C'_\omega$ depending only on $\omega$ such that
$$\|\psi_1^{(4)}\|_{L^p(0,\omega)} \leqslant C'_\omega \left\|\A\left(\begin{array}{c}
\psi_1 \\
\psi_2
\end{array}\right)\right\|_X.$$
Moreover, it follows
$$\begin{array}{lll}
\left\|\left( \begin{array}{c}
\psi _{1} \\ 
\psi _{2}
\end{array}\right) \right\| _{\mathcal{G}_{1}} & \leqslant & \left(1 + C_\omega\right) \left\|\left(\begin{array}{c}
\psi_1 \\
\psi_2
\end{array}\right)\right\|_{X} + C_\omega C'_\omega \left\|\left(\begin{array}{c}
\psi_1 \\
\psi_2
\end{array}\right)\right\|_{X}^{1/2} \left\|\A\left(\begin{array}{c}
\psi_1 \\
\psi_2
\end{array}\right)\right\|_X^{1/2}.
\end{array}$$
Therefore, inequality \eqref{ineg norme w} is satisfied for $\delta = 1/2$ and $\M_2 = \L_2$. Using \refC{Cor inv fermeture somme}, we obtain 
\begin{equation*}
D\left(\overline{\L_1 + \L_2}\right)\subset \E_{2},
\end{equation*}
which gives the expected result.
\end{proof}

\section{Back to the abstract problem}\label{Sect Back to Abstract Pb}

Now, we are in position to solve the following equation
\begin{equation}\label{eq L1+L2}
\left(\overline{\L_1 + \L_2}\right) V + k \rho^2 \left(\P_1 + \P_2 \right)V = \F.
\end{equation}
\begin{Th}\label{Th exist V strong sol}
Let $\F \in L^p(0,+\infty;X)$ and assume that \eqref{hyp inv sum} holds.  Then, there exists $\rho_0 > 0$ such that for all $\rho \in (0,\rho_0]$, equation \eqref{eq L1+L2} has a unique strong solution $V \in L^p(0,+\infty;X)$, that is
\begin{equation}\label{Def strong solution}
\left\{\begin{array}{l}
\exists\, \left(V_n\right)_{n\geqslant 0} \in D(\L_1)\cap D(\L_2) : \\ \\

V_n \underset{n \to +\infty}{\longrightarrow} V \text{ in } L^p(0,+\infty;X) \text{ and}\\ \\

\left(\L_1 + \L_2\right) V_n + k \rho^2 \left(\P_1 + \P_2 \right)V_n \underset{n \to +\infty}{\longrightarrow} \F \text{ in }L^p(0,+\infty;X),
\end{array}\right.
\end{equation}
satisfying 
\begin{equation}\label{Reg V}
V \in W^{1,p}(0,+\infty;X) \cap L^{p}\left( 0,+\infty ;\left[ W^{3,p}(0,\omega)\cap W_{0}^{2,p}(0,\omega)\right] \times L^p(0,\omega)\right).
\end{equation}
\end{Th}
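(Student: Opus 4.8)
The plan is to treat $k\rho^{2}(\P_{1}+\P_{2})$ as a bounded perturbation of $\overline{\L_{1}+\L_{2}}$, which by \refT{Th L1+L2 barre inversible} is boundedly invertible on $\mathcal{E}=L^{p}(0,+\infty;X)$, and to invert the perturbed operator by a Neumann series once $\rho$ is small. First I would record the mapping properties of the perturbation. One checks that $\A_{0}\in\L(X)$, since for $(\psi_{1},\psi_{2})\in X=W_{0}^{2,p}(0,\omega)\times L^{p}(0,\omega)$ one has $\|(\partial_{\theta}^{2}+1)\psi_{1}+\psi_{2}\|_{L^{p}(0,\omega)}\leqslant C\|(\psi_{1},\psi_{2})\|_{X}$ (Poincar\'e on the first slot), and $0<e^{-2t}\leqslant1$; hence $\P_{1}\in\L(\mathcal{E})$. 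Similarly, with $\mathcal{E}_{1}=W^{1,p}(0,+\infty;X)$, the term $\mathcal{B}_{2}V$ involves only $\partial_{t}$ applied to the first component of $V$, which lies in $L^{p}(0,+\infty;L^{p}(0,\omega))$ with norm controlled by $\|V\|_{\mathcal{E}_{1}}$, so $\P_{2}\in\L(\mathcal{E}_{1},\mathcal{E})$; altogether $\P_{1}+\P_{2}\in\L(\mathcal{E}_{1},\mathcal{E})$.

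Next I would upgrade $(\overline{\L_{1}+\L_{2}})^{-1}$ to a bounded operator from $\mathcal{E}$ into $\mathcal{E}_{1}$. By \eqref{L1+L2 dans E1 inter E2} we already know $D(\overline{\L_{1}+\L_{2}})\subset\mathcal{E}_{1}\cap\mathcal{E}_{2}$; endowing $D(\overline{\L_{1}+\L_{2}})$ with its graph norm, the inclusion into $\mathcal{E}_{1}$ has closed graph (the graph norm dominates $\|\cdot\|_{\mathcal{E}}$ and $\mathcal{E}_{1}\hookrightarrow\mathcal{E}$), hence is continuous by the closed graph theorem. Composing with the bounded map $(\overline{\L_{1}+\L_{2}})^{-1}:\mathcal{E}\to D(\overline{\L_{1}+\L_{2}})$ gives $(\overline{\L_{1}+\L_{2}})^{-1}\in\L(\mathcal{E},\mathcal{E}_{1})$, and the same argument gives $(\overline{\L_{1}+\L_{2}})^{-1}\in\L(\mathcal{E},\mathcal{E}_{2})$.

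Now I would set $\mathcal{K}:=(\P_{1}+\P_{2})(\overline{\L_{1}+\L_{2}})^{-1}\in\L(\mathcal{E})$, a bounded operator whose norm is independent of $\rho$, and fix $\rho_{0}>0$ so that $k\rho_{0}^{2}\|\mathcal{K}\|_{\L(\mathcal{E})}<1$. For $\rho\in(0,\rho_{0}]$ the operator $I+k\rho^{2}\mathcal{K}$ is invertible on $\mathcal{E}$ by a Neumann series, and I would define $V:=(\overline{\L_{1}+\L_{2}})^{-1}(I+k\rho^{2}\mathcal{K})^{-1}\F$. Putting $W:=(\overline{\L_{1}+\L_{2}})V=(I+k\rho^{2}\mathcal{K})^{-1}\F$, one has $(\P_{1}+\P_{2})V=\mathcal{K}W$, whence $(\overline{\L_{1}+\L_{2}})V+k\rho^{2}(\P_{1}+\P_{2})V=(I+k\rho^{2}\mathcal{K})W=\F$; conversely, any $V'\in D(\overline{\L_{1}+\L_{2}})$ solving \eqref{eq L1+L2} satisfies $(I+k\rho^{2}\mathcal{K})(\overline{\L_{1}+\L_{2}})V'=\F$, so $V'=V$, which gives uniqueness. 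To produce the sequence in \eqref{Def strong solution}, I would use the definition of the closure: pick $V_{n}\in D(\L_{1})\cap D(\L_{2})$ with $V_{n}\to V$ and $(\L_{1}+\L_{2})V_{n}\to(\overline{\L_{1}+\L_{2}})V$ in $\mathcal{E}$, i.e. $V_{n}\to V$ in graph norm; by the previous step $V_{n}\to V$ in $\mathcal{E}_{1}$, hence $(\P_{1}+\P_{2})V_{n}\to(\P_{1}+\P_{2})V$ in $\mathcal{E}$, so $(\L_{1}+\L_{2})V_{n}+k\rho^{2}(\P_{1}+\P_{2})V_{n}\to\F$. Finally $V\in D(\overline{\L_{1}+\L_{2}})\subset\mathcal{E}_{1}\cap\mathcal{E}_{2}$ is exactly the regularity \eqref{Reg V}.

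I expect the main obstacle to be the continuity of the embedding $D(\overline{\L_{1}+\L_{2}})\hookrightarrow\mathcal{E}_{1}$ together with the $\mathcal{E}_{1}\to\mathcal{E}$ boundedness of $\P_{1}+\P_{2}$: this is precisely what makes the formally unbounded operator $\P_{2}$ a genuine relatively bounded perturbation (with relative bound $0$), and it rests on the convexity inequalities of \refS{Sect study of L1+L2} and \refC{Cor inv fermeture somme} rather than on any new computation. The remaining work — the Neumann argument, the choice of $\rho_{0}$, and translating between the $\overline{\L_{1}+\L_{2}}$-formulation of the equation and the approximating-sequence formulation \eqref{Def strong solution} — is routine bookkeeping.
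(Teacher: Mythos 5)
Your proposal is correct and follows essentially the same route as the paper: factor the equation through $(\overline{\L_1+\L_2})^{-1}$, use the inclusion $D(\overline{\L_1+\L_2})\subset\E_1\cap\E_2$ from the convexity inequalities to see that $(\P_1+\P_2)(\overline{\L_1+\L_2})^{-1}$ is bounded on $\E$, and invert $I+k\rho^2(\P_1+\P_2)(\overline{\L_1+\L_2})^{-1}$ by a Neumann series for $\rho$ small. The only cosmetic difference is that you derive $(\overline{\L_1+\L_2})^{-1}\in\L(\E,\E_1)$ via a closed-graph argument, whereas \refC{Cor inv fermeture somme} already gives this boundedness from the convergence of the Dunford integral in $\E_1$; your write-up of the perturbation estimates and of the approximating sequence is in fact more detailed than the paper's.
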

\begin{proof}
Due to \refT{Th L1+L2 barre inversible}, if \eqref{hyp inv sum} holds, then $\overline{\L_1 + \L_2}$ is invertible. Thus, it follows that
\begin{equation*}
\left[ I + k \rho^2 \left(\P_1 + \P_2 \right)\left(\overline{\L_1 + \L_2}\right)^{-1} \right]\left(\overline{\L_1 + \L_2}\right) V = \F.
\end{equation*}
From \eqref{L1+L2 dans E1 inter E2}, we deduce that $V \in D(\overline{\L_1 + \L_2}) \subset \E_1 \cap \E_2$, that is \eqref{Reg V} which involves that 
$$(\P_1 + \P_2)(\overline{\L_1 + \L_2})^{-1} \in \L(X).$$ 
Then, there exists $\rho_0 > 0$ small enough such that, for all $\rho \in (0,\rho_0]$, we have
\begin{equation}\label{V}
V = \left(\overline{\L_1 + \L_2}\right)^{-1}\left[ I + k \rho^2 \left(\P_1 + \P_2 \right)\left(\overline{\L_1 + \L_2}\right)^{-1} \right]^{-1}\F,
\end{equation}
which means that $V$ is the unique strong solution of \eqref{eq L1+L2}. 
\end{proof}

\section{Proof of \refT{Th principal}}\label{Sect Proof of main Th}

From \refT{Th exist V strong sol}, there exists $\rho_0 > 0$ such that for all $\rho \in (0,\rho_0]$, equation \eqref{eq L1+L2} has a unique strong solution $V \in L^p(0,+\infty;X)$ satisfying \eqref{Reg V}. Then, due to \eqref{Def strong solution}, there exists $(V_n)_{n \in \NN} \in D(\L_1 + \L_2)$ such that $V_n \underset{n \to +\infty}{\longrightarrow} V$ and
$$\lim_{n \to +\infty} \left(\L_1 + \L_2\right) V_n + k \rho^2 \left(\P_1 + \P_2 \right)V_n = \F.$$
Since $(V_n)_{n \in \NN} \in D(\L_1 + \L_2)$, then the previous equality writes
\begin{equation}\label{Pb Vn}
\left\{\begin{array}{l}
\dis\lim_{n \to +\infty} \left(V_n''(t) - \A V_n(t) - \F_n(t)\right) = 0 \\ \ecart
\dis\lim_{n \to +\infty}V_n(0)=0, \quad \dis\lim_{n \to +\infty}V_n(+\infty) = 0,
\end{array}\right.
\end{equation}
where
$$\F_n(t) =  k\rho^{2}e^{-2t}\mathcal{A}_{0}V_n(t) + k\rho ^{2}e^{-2t}\left[ (\mathcal{B}_{2}V_n)\right] (t) + 2\nu V_n'(t) - \nu^2 V_n(t) + \F(t).$$
Since $V_n \underset{n \to +\infty}{\longrightarrow} V$ and $V$ satisfies \eqref{Reg V}, we deduce that 
$$\lim_{n \to +\infty} V_n (0) = V(0) = 0 \quad \text{with} \quad \lim_{n \to +\infty} V_n (+\infty) = V(+\infty) = 0,$$
and 
$$\lim_{n \to +\infty} \F_n (t) = \F_\infty (t) \in L^p(0,+\infty;X),$$
where
$$\F_\infty (t) =  k\rho^{2}e^{-2t} \mathcal{A}_{0}V(t) + k\rho ^{2}e^{-2t}\left[ (\mathcal{B}_{2}V)\right] (t) + 2\nu V'(t) - \nu^2 V(t) + \F(t).$$
Thus, problem \eqref{Pb Vn} writes 
\begin{equation*}
\left\{\begin{array}{l}
\dis\lim_{n \to +\infty} \left(V_n''(t) - \A V_n(t)\right) = \F_\infty (t) \\ \ecart
V(0)=0, \quad V(+\infty) = 0,
\end{array}\right.
\end{equation*}
Moreover, from \refP{Prop A BIP}, $\A \in$ BIP\,$(X,\theta_\A)$, with $\theta_\A \in (0,\pi)$ and due to \cite{haase}, Proposition 3.2.1, e), p. 71, it follows that $\sqrt{\A} \in$ BIP\,$(X,\theta_\A /2)$ with $\theta_\A /2 \in (0,\pi/2)$. Therefore, due to \cite{amine}, Theorem 2, p. 712, with $L_1 = L_2 = - \sqrt{\A}$, there exists a unique classical solution to the following problem
\begin{equation*}
\left\{\begin{array}{l}
\mathcal{V}''(t) - \A \mathcal{V}(t) = \F_\infty (t) \\ \ecart
\mathcal{V}(0)=0, \quad \mathcal{V}(+\infty) = 0,
\end{array}\right.
\end{equation*}
that is
$$\mathcal{V} \in W^{2,p}(0,+\infty;X) \cap L^p(0,+\infty;D(\A)).$$
Thus, it follows that
$$\lim_{n \to +\infty} \left(V_n''(t) - \A V_n(t)\right) = \mathcal{V}''(t) - \A \mathcal{V},$$
hence
$$\lim_{n \to +\infty} \left[ \left(V_n(t) - \mathcal{V}(t)\right)'' - \A \left(V_n(t) - \mathcal{V}(t)\right) \right] = 0.$$
Now, set
$$\left\{\begin{array}{cll}
D(\delta_2) & = & \left\{\varphi \in W^{2,p}(0,+\infty;X) : \varphi(0) = \varphi(+\infty) = 0 \right\} \\ \ecart
\delta_2 \varphi & = & \varphi'', \quad \varphi \in D(\delta_2).
\end{array}\right.$$
Then, we can write
\begin{equation}\label{lim Vn-V}
0 = \lim_{n \to +\infty} \left[\left(V_n(t) - \mathcal{V}(t)\right)'' - \A \left(V_n(t) - \mathcal{V}(t)\right) \right] = \lim_{n \to +\infty} -\left(-\delta_2 + \A\right) \left(V_n(t) - \mathcal{V}(t)\right).
\end{equation}
From \cite{pruss-sohr2}, Theorem~C, p.~166-167, it follows that $-\delta_2 \in$ BIP\,$(X,\theta_{\delta_2})$, for every $\theta_{\delta_2} \in (0,\pi)$ and due to \refP{Prop A BIP}, $\A \in$ BIP\,$(X,\theta_\A)$, for all $\theta_\A \in (0,\pi)$. Thus, since $-\delta_2$ and $\A$ are resolvent commuting with $\theta_{\delta_2} + \theta_\A < \pi$, from \cite{pruss-sohr}, Theorem 5, p. 443, we obtain that
$$-\delta_2 + \A \in \text{BIP}\,(X,\theta), \quad \theta = \max(\theta_{\delta_2},\theta_\A).$$
Moreover, due to \refP{Prop A inversible}, we have $0\in \rho(\A)$, then we deduce from \cite{pruss-sohr}, remark at the end of p. 445, that $0 \in \rho(\delta_2 + \A)$. Therefore, due to \eqref{lim Vn-V}, we obtain that
$$\lim_{n \to +\infty} V_n(t) - \mathcal{V}(t) = 0,$$
hence, since $V_n \underset{n \to +\infty}{\longrightarrow} V$, by uniqueness of the limit, we deduce that 
$$V = \mathcal{V} \in W^{2,p}(0,+\infty;X) \cap L^p(0,+\infty;D(\A)).$$
This prove that $\L_1 + \L_2$ is closed and that $V \in D(\L_1 + \L_2)$.

\end{document}